\documentclass[reqno,12pt]{amsart}
\usepackage{amsmath, amssymb, amsthm, amsfonts} 
\usepackage[english]{babel}
\usepackage{bbm}
\usepackage{graphicx}
\usepackage{subcaption} 
\usepackage{url}
\usepackage{soul}
\usepackage{epstopdf}
\usepackage[ruled,vlined]{algorithm2e}
\usepackage[a4paper,bindingoffset=0.5cm,left=2cm,right=2cm,top=2.5cm,bottom=2cm,footskip=.8cm]{geometry}
\usepackage{rotating}
\usepackage{amsbsy,enumerate,enumitem}
\usepackage{graphicx}
\usepackage{ccaption}
\usepackage{comment}
\usepackage{mathrsfs}
\usepackage{hyperref}
\usepackage{geometry}

\newcommand{\bs}{\boldsymbol}

\newcommand{\vb}{\vspace{3.2mm}}
\renewcommand{\hat}{\widehat}

\newcommand{\vertiii}[1]{{\left\vert\kern-0.25ex\left\vert\kern-0.25ex\left\vert #1 \right\vert\kern-0.25ex\right\vert\kern-0.25ex\right\vert}}

\allowdisplaybreaks

\newcommand{\queueingthreshold}{v}

\allowdisplaybreaks

\newtheorem{lemma}{Lemma}
\newtheorem{corollary}{Corollary}
\newtheorem{assumption}{Assumption}
\newtheorem{theorem}{Theorem}
\newtheorem{remark}{Remark}

\newtheorem{proposition}{Proposition}

\usepackage[utf8]{inputenc}
\usepackage{type1cm}         
\usepackage{multicol}        
\usepackage[bottom]{footmisc}

\usepackage{newtxtext}       %
\usepackage{newtxmath}       

\usepackage{pgfplots}

\usepackage{tikz}
\usepackage[mode=buildnew]{standalone}
\usetikzlibrary{automata,positioning}
\usetikzlibrary{decorations.pathreplacing,decorations.markings,shapes.geometric}
\usetikzlibrary{shapes,arrows}
\usetikzlibrary{backgrounds,calc,positioning}

\usepackage{pgfplots}
\pgfplotsset{compat=1.9}

\begin{document}

	\title[Transient behavior of threshold-dependent ruin and queueing models]{Transient Behavior of Threshold-Dependent Ruin\\ and Queueing Models with Phase-Type Jumps}
	\author[O. Boxma, M. Mandjes, D. Rutgers, {\tiny and} W. Scheinhardt]{Onno Boxma, Michel Mandjes, Dani\"el Rutgers {\tiny and} Werner Scheinhardt}
	
	\begin{abstract} 
		We study the transient behavior of two threshold-dependent stochastic models: a Cramér--Lundberg risk model and an M/G/1-type queueing model. In both models, the dynamics are driven by two different compound Poisson processes with drift, with the governing process depending on whether the current state is below or above a fixed threshold. For the risk model, we characterize the probability of ruin before an exponentially distributed epoch, while for the queueing model, we characterize the Laplace--Stieltjes transform of the workload at an exponentially distributed epoch. These quantities are characterized using fluctuation-theoretic results for spectrally positive Lévy processes, with Laplace transforms taken with respect to time for the risk model and with respect to both state and time for the queueing model. We consider general claim-size and job-size distributions and provide explicit representations of the relevant auxiliary quantities when these distributions are of phase-type.

		\vb
		
		\noindent
		{\sc Keywords.} Risk theory $\circ$ queueing theory $\circ$ threshold $\circ$ phase-type distributions 
		
		\vb
		
		\noindent
		{\sc Affiliations.} OB is with Eurandom and Department of Mathematics and Computer Science, Eindhoven University of Technology, P.O. Box 513, 5600 MB Eindhoven, The Netherlands
		
		\noindent
		MM is with Mathematical Institute, Leiden University, P.O. Box 9512,
		2300 RA Leiden,
		The Netherlands. He is also affiliated with Korteweg-de Vries Institute for Mathematics, University of Amsterdam, Amsterdam, The Netherlands; E{\sc urandom}, Eindhoven University of Technology, Eindhoven, The Netherlands; Amsterdam Business School, Faculty of Economics and Business, University of Amsterdam, Amsterdam, The Netherlands. 
		
		\noindent
		DR is with Mathematical Institute, Leiden University, P.O. Box 9512,
		2300 RA Leiden,
		The Netherlands. 
		
		\noindent
		WS is with Faculty of Electrical Engineering, Mathematics and Computer Science, University of Twente, 
		P.O. Box 217, 7500 AE  Enschede, The Netherlands.
		
		\noindent Date: {\it \today}.
		
		\vb
		
		\noindent
		{\sc Acknowledgments.} 
		OB's and MM's research has been partly funded by the NWO Gravitation project N{\sc etworks}, grant number 024.002.003.

		\vb
		
		\noindent
		{\sc Email.} {\scriptsize \url{o.j.boxma@tue.nl}, \url{m.r.h.mandjes@math.leidenuniv.nl}, 
			\url{d.t.rutgers@math.leidenuniv.nl}, 
			\url{w.r.w.scheinhardt@utwente.nl}}.
		
		\vb 
		
		\noindent 
		{\sc Corresponding author.} DR

	\end{abstract}

	\maketitle
	
	\section{Introduction}\label{sec: intro}
	In this paper, we study the transient behavior of two threshold-dependent stochastic models: a Cram\'er--Lundberg risk model and an M/G/1-type queueing model. In both models, the dynamics change when the process crosses a prescribed threshold, with different compound Poisson processes with drift  (CPPs) governing the dynamics below and above the threshold. For the Cram\'er--Lundberg model, our objective is to characterize the probability of ruin before an arbitrary time $t>0$, while for the M/G/1-type queueing model, we aim to characterize the Laplace-Stieltjes transform (LST) of the workload at time $t$. We obtain these transient quantities by characterizing them at an independent exponentially distributed observation time $T_\beta$ with parameter $\beta>0$, so that their Laplace transforms with respect to time are given by $\beta^{-1}$ times the corresponding quantities evaluated at $T_\beta$.

	In our approach, we make use of fluctuation theory for L\'evy processes. In particular, we rely on results for spectrally positive L\'evy processes, that is, L\'evy processes with no negative jumps. These results provide useful tools for analyzing first-passage times, exit probabilities, and overshoot distributions for the compound Poisson processes governing the dynamics of our models. We, therefore, express several of the relevant quantities in terms of L\'evy-process characteristics, such as Laplace exponents and scale functions. {Our analysis combines standard fluctuation identities for spectrally positive L\'evy processes with structural properties specific to the Cram\'er--Lundberg ruin and M/G/1 queueing models. The latter enter at several key points of the analysis; see Section~\ref{sec:disc} for further discussion.}

	\subsubsection*{Models and objectives}
	Let $(Y_1(t))_{t\geqslant0}$ and $(Y_2(t))_{t\geqslant0}$ denote two CPPs with drift parameters $r_i<0$, arrival rates $\lambda_i$, and claim-sizes $B_i$, for $i=1,2$.
	
	\begin{itemize}
		\item[$\circ$]
		For the Cramér--Lundberg model, let $(Y_v(t))_{t\geqslant0}$ denote the
		total net cumulative claim process, with $Y_v(0)=0$, where
		$v\in(-\infty,u)$ is the threshold. The process $Y_v$ is driven by the
		CPP $Y_1$ while it resides below the threshold $v$, and by the CPP $Y_2$
		while it is above $v$. Our first objective is to evaluate the probability
		of ruin before an exponentially distributed epoch $T_\beta$ with parameter
		$\beta>0$. Let
		\[
		\overline{Y}_v(t):=\sup_{s\in[0,t]}Y_v(s)
		\]
		denote the running maximum of $Y_v$. We define
		\begin{equation}\label{eq: def ruin prob}
			p(u,v,\beta)
			:=\mathbb{P}\left(\overline{Y}_v(T_\beta)>u\right),
		\end{equation}
		and the all-time ruin probability by
		\begin{equation}\label{eq: def all time ruin prob}
			p(u,v)\equiv p(u,v,0)
			:=\mathbb{P}\left(\overline{Y}_v(\infty)>u\right).
		\end{equation}
		The time-dependent ruin probability can be evaluated via the inverse Laplace transform of \eqref{eq: def ruin prob} with respect to $\beta$.
		
		\item[$\circ$]The second model we consider is a dual CPP-driven queueing system. Let
		$(V_{\queueingthreshold}(t))_{t\geqslant0}$ denote the workload process,
		with arbitrary initial value
		$V_{\queueingthreshold}(0)=x$, for some $x\geqslant0$, and let
		$\queueingthreshold>0$ denote the threshold. When
		$V_{\queueingthreshold}$ resides below the threshold $\queueingthreshold$,
		it is driven by the CPP $Y_1$; equivalently, it behaves as the reflected
		version of $Y_1$ at zero. When it resides above $\queueingthreshold$, it is
		driven by $Y_2$ without reflection. For this model, our objective is to
		determine the LST of the workload at the
		exponential epoch $T_\beta$, given by
		\begin{equation}\label{eq: def LST workload}
			\chi(x)\equiv\chi(x,\queueingthreshold,\alpha,\beta)
			:=\mathbb{E}_x\left(
			e^{-\alpha V_{\queueingthreshold}(T_\beta)}
			\right)
			=
			\mathbb{E}\left(
			e^{-\alpha V_{\queueingthreshold}(T_\beta)}
			\,\middle|\,
			V_{\queueingthreshold}(0)=x
			\right).
		\end{equation}
		The LST in \eqref{eq: def LST workload} can also be interpreted as the double Laplace transform of the transient workload distribution, i.e.,
		\[
		\mathbb{E}\left(
		e^{-\alpha V_{\queueingthreshold}(T_\beta)}
		\,\middle|\,
		V_{\queueingthreshold}(0)=x
		\right) = \int_0^\infty \int_0^\infty {\rm e}^{-\alpha y}  \beta {\rm e}^{-\beta t} \mathbb{P}(V_v(t) \in \mathrm{d}y|V_v(0)=x)\,{\rm d} t,
		\]
		from which the transient density of the workload can be derived through Laplace inversion. For an algorithm that is specifically suited for the numerical inversion of double Laplace transforms we refer to \cite{dI}.
	\end{itemize}
	{Figure \ref{fig: example paths} illustrates sample paths of the two models.}
	
	\medskip
	
	In this paper, we characterize the ruin probability and the LST of the workload in terms of Laplace transforms with respect to time, for general claim-size and job-size distributions. We then specialize the auxiliary quantities appearing in these transform representations to phase-type distributions, for which explicit expressions are available. 
	
	\begin{figure}[ht]
		\centering
		\begin{subfigure}{0.49\textwidth}
			\centering
			\includegraphics[width=\textwidth]{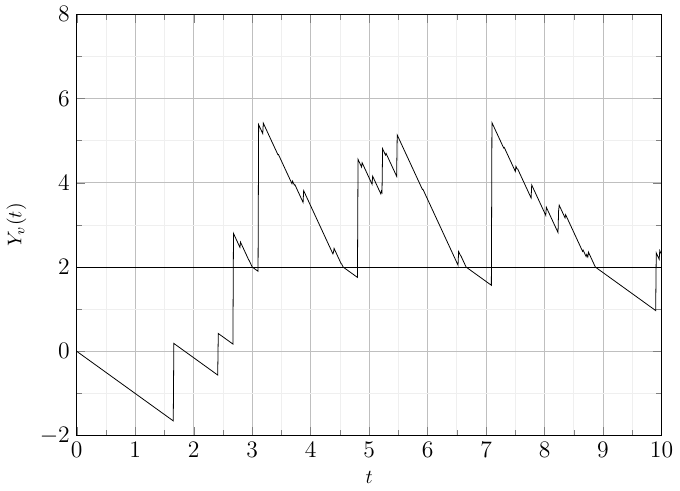}
			\caption{\label{fig: example path CL model}}
		\end{subfigure}
		\hfill
		\begin{subfigure}{0.49\textwidth}
			\centering
			\includegraphics[width=\textwidth]{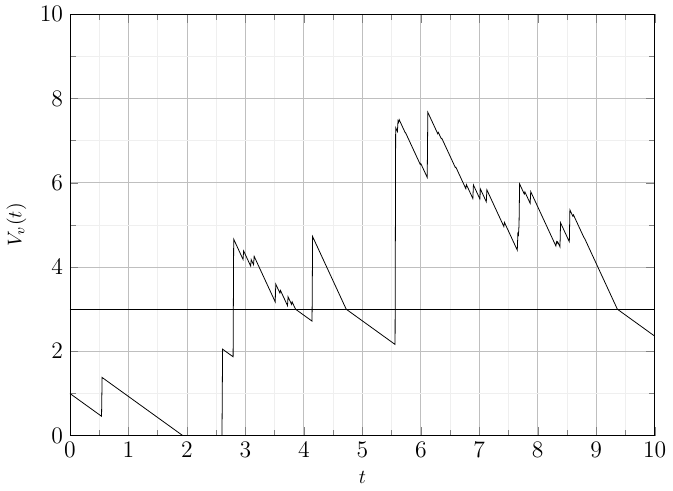}
			\caption{\label{fig: example path queueing model}}
		\end{subfigure}
		\caption{{\sc (A)} {(Cram\'er--Lundberg model)}: sample path of a net cumulative claim process over time with the threshold at $v=2$. {\sc (B)} {(queueing model)}: sample path of a workload process over time with the threshold at $v=3$ and initial level $x=1$.}
		\label{fig: example paths}
	\end{figure}

	\subsubsection*{Motivation} 
	The Cram\'er--Lundberg model and its variants play an important role in ruin theory and have numerous applications, including in insurance and credit risk management. While the standard Cram\'er--Lundberg model provides a basic framework for evaluating ruin probabilities, variants with level-dependent dynamics can provide a more flexible representation of an insurer's behavior. For example, an insurance company may adjust its premium rate or claims policy depending on its current reserve level. The model studied in this paper is one of the simpler examples of such a level-dependent risk model.
	Applications of the Cram\'er--Lundberg model often focus on the all-time ruin probability, as this is generally more tractable than the corresponding finite-time ruin probability. Although the all-time ruin probability provides a useful measure of long-term risk, the probability of ruin before a specified time horizon may be of greater practical interest. In our approach, the finite-time ruin probability is characterized through its Laplace transform with respect to time. The all-time ruin probability then follows as a limiting case of this characterization.
	
	The standard M/G/1 queue is a fundamental model for systems in which jobs arrive according to a Poisson process and require generally distributed service times, with applications including service systems, production and inventory systems, data centers, and healthcare. Related workload and storage models, such as fluid and dam models, provide analogous frameworks for systems in which accumulated workload or content evolves continuously over time. In many such settings, the dynamics of the system depend on the current workload or content level. For example, additional servers may be activated when the workload becomes large, or the input rate may be reduced to prevent excessive congestion. Queueing models with different dynamics above and below a specified threshold provide a natural framework for representing such state-dependent behavior and constitute a natural extension of the standard M/G/1 model.
	
	The queueing model considered in this paper focuses on the transient behavior of a threshold-dependent queue. While threshold-dependent queueing models have been studied extensively in steady-state, their transient behavior has received comparatively less attention. Transient analysis is relevant in settings where systems operate over finite periods, are shut down and reinitialized, or where the workload at a particular time is of interest.
	
	The ruin and queueing models considered in this paper are driven by compound Poisson processes with phase-type jump-size distributions. Phase-type distributions are particularly useful in this setting because they can approximate any distribution on $[0,\infty)$ arbitrarily closely; see, for example, \cite[Theorem III.4.2]{ASM}. Moreover, their tractability allows the auxiliary quantities appearing in our transform representations to be characterized explicitly. The general transform characterizations developed in this paper, however, do not require the jump-size distributions to be phase-type.
	
	\subsubsection*{Related literature} 
	We give a brief overview of the literature on level-dependent dynamic risk and queueing models, with an emphasis on contributions most closely related to the models studied in this paper.
	
	The predominant focus in the related literature is on the all-time ruin probability. The Cram\'er--Lundberg model in which only the premium rate $r$ depends on a fixed threshold has been studied, for example, in \cite[Section VIII.1a]{ASM2}; see also \cite{GS, LP, WAN} for related dividend problems. An extension to a general L\'evy setting with an adaptive linear drift is given in \cite{KL}. The Cram\'er-Lundberg model with multiple thresholds is studied in detail in \cite{AH} and \cite[Section VIII.1b]{ASM2}. In \cite{BJRS, BJRS2}, a slightly different setting is considered, in which the premium rate is adapted based on the level of the cumulative claim process. The adaptation takes place at claim arrival times in \cite{BJRS} and at Poisson observation times in \cite{BJRS2}. 
	
	The closely related M/G/1-type queueing model has also been studied extensively, although mostly in the steady-state setting. Early work in this area considers an exceptional first service after a busy period \cite{WEL} and the optimal switching level for the service rate \cite{COH2}.  Queueing systems with adaptable service rates are studied in \cite{BB, BBR2}, where the service rate can only be changed at specific instances, such as after arrivals or at Poisson observation times. A model with an adaptable arrival rate that depends on the workload level is studied in \cite{vOB}.
	
	The literature on level-dependent, L\'evy-driven queues is more limited. Steady-state analyses are given in \cite{BEK,BBR}, where the dynamics of the L\'evy processes change when a certain threshold is crossed or at Poisson moments. The model in \cite{BEK} additionally allows for a different, lower threshold for down-crossings. In \cite{ABKM}, both the transient and steady-state behavior of a L\'evy-driven queue are studied. In this model, the dynamics change when a threshold is crossed and only switch back after the end of the busy period. For a broad overview of level-dependent queueing systems, we refer to \cite{DSH}.
	
	For the standard background on phase-type distributions and L\'evy processes needed to follow this paper, we refer to \cite{BN} and \cite{KYP}, respectively.

	\subsubsection*{Organization} This paper is organized as follows. In Section \ref{sec: prelim} we define a number of auxiliary objects for CPPs with drift and give an overview of their properties which follow from L\'evy fluctuation theory. We also introduce in this section the notation for phase-type variables used throughout this paper, as well as some useful properties. In Section \ref{sec: aux for phase-type} we derive closed-form expressions for the objects defined in Section \ref{sec: prelim} for CPPs with phase-type distributed jumps. Section \ref{sec: Cramer-Lundberg model} then analyzes the ruin probability of the Cram\'er-Lundberg model with a threshold, and Section \ref{sec: queueing model} the LST of the queueing model with threshold, for general compound Poisson processes with drift. Finally, in Section~\ref{sec: expansions} we discuss a number of extensions and variations of the queueing model.

	\section{Preliminaries}\label{sec: prelim}
	In this section we collect some properties of the standard Cram\'er--Lundberg
	model that will be used throughout the paper. Let $(Y(t))_{t\geqslant0}$ be a
	compound Poisson process with negative drift $r<0$, arrival rate
	$\lambda>0$, and i.i.d.\ jump sizes distributed as $B$, that is,
	\[
	Y(t)=rt+\sum_{i=1}^{N(t)}B_i,
	\qquad t\geqslant0,
	\]
	where $N$ is a Poisson process with rate $\lambda$ and $Y(0)=0$. The
	corresponding workload process is obtained by reflection of $Y$ at zero;
	in this context, $Y$ is often referred to as the \emph{free process}.
	
	The process $Y$ is a spectrally positive L\'evy process, that is, a process
	with stationary and independent increments and no negative jumps
	\cite{KYP}. 
	We first recall some standard results from L\'evy fluctuation theory that will be used below. 
	At the end of this section we introduce the notation used for phase-type distributions throughout the paper.

	\subsection{L\'evy results}For a spectrally positive L\'evy process, it is convenient to work with its
	Laplace exponent
	\[
	\varphi(\alpha):=\log\mathbb{E}e^{-\alpha Y(1)}.
	\]
	For the compound Poisson process $Y$,
	\begin{equation}\label{eq: def laplace exp}
		\varphi(\alpha)
		=-r\alpha-\lambda\bigl(1-\mathscr{B}(\alpha)\bigr),
	\end{equation}
	where
	$
	\mathscr{B}(\alpha):=\mathbb{E}e^{-\alpha B}$
	is the LST of the jump-size distribution. We denote the right-inverse of
	$\varphi$ by $\psi$.
	
	We will also make extensive use of the \emph{scale functions}. For
	$\beta\geqslant0$, the primary scale function $W^{(\beta)}$ is defined by
	\[
	\int_0^\infty e^{-\alpha x}W^{(\beta)}(x)\,\mathrm{d}x
	=
	\frac{1}{\varphi(\alpha)-\beta},
	\qquad \varphi(\alpha)>\beta,
	\]
	and the corresponding secondary scale function is
	\[
	Z^{(\beta)}(u)
	:=
	1+\beta\int_0^u W^{(\beta)}(x)\,\mathrm{d}x,
	\qquad u\geqslant0.
	\]
	For background on scale functions of spectrally one-sided L\'evy processes,
	see, e.g., \cite[\S VIII.3]{KYP}, \cite[\S VII.2]{BER}, and \cite{KUZ}.
	
	We will repeatedly use the stopping times
	\begin{align}
		\sigma(u_-)&:=\inf\{t\geqslant0:Y(t)<-u_-\},\\
		\tau(u_+)&:=\inf\{t\geqslant0:Y(t)>u_+\},
	\end{align}
	for $u_-,u_+\geqslant0$. Thus, $\sigma(u_-)$ and $\tau(u_+)$ are the first
	times at which $Y$ falls below $-u_-$ and exceeds $u_+$, respectively.
	Since $Y$ has no negative jumps, it reaches $-u_-$ continuously, so that
	the condition $Y(t)\leqslant-u_-$ is equivalent to the one above.
	
	Finally, for $u_-+u_+>0$ and $\beta\geqslant0$, define
	\begin{align}
		\delta_-(u_-,u_+,\beta)
		&:=
		\mathbb{P}\bigl(
		\sigma(u_-)\leqslant
		\min\{\tau(u_+),T_\beta\}
		\bigr),\label{eq: def delta_min}\\
		\delta_+(u_-,u_+,\beta)
		&:=
		\mathbb{P}\bigl(
		\tau(u_+)\leqslant
		\min\{\sigma(u_-),T_\beta\}
		\bigr).\label{eq: def delta_plus}
	\end{align}
	Here $T_\beta$ denotes an independent exponential time with rate $\beta$,
	with $T_0=\infty$. Thus, $\delta_-$ and $\delta_+$ are the probabilities
	that $Y$ first reaches the lower or upper boundary, respectively, before
	reaching the other boundary and before killing. For $\beta=0$, this reduces
	to the usual two-sided hitting probabilities over an infinite time horizon.
	
	\medskip
	
	The following standard result expresses the two-sided hitting probabilities
	in terms of the scale functions.
	
	\begin{lemma}\label{lemma: deltas}
		For $u_-,u_+\geqslant0$ with $u_-+u_+>0$ and $\beta\geqslant0$,
		\begin{align}
			\delta_-(u_-,u_+,\beta)
			&=
			\frac{W^{(\beta)}(u_+)}
			{W^{(\beta)}(u_-+u_+)},
			\label{eq: delta_min}\\
			\delta_+(u_-,u_+,\beta)
			&=
			Z^{(\beta)}(u_+)
			-
			Z^{(\beta)}(u_-+u_+)
			\frac{W^{(\beta)}(u_+)}
			{W^{(\beta)}(u_-+u_+)},
			\label{eq: delta_plus}
		\end{align}
	\end{lemma}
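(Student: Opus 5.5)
Both identities will follow from the standard two-sided exit results for spectrally negative Lévy processes (Kyprianou, Theorem 8.1), applied to the dual process $X:=-Y$. This $X$ is spectrally negative with Laplace exponent $\log\mathbb{E}e^{\alpha X(1)}=\varphi(\alpha)$, so $X$ has the same scale functions $W^{(\beta)},Z^{(\beta)}$ as defined above. Setting $a:=u_-+u_+$ and starting $X$ at $x:=u_+\in[0,a]$, the event $\{Y$ drops below $-u_-\}$ becomes $\{x+X$ exceeds $a\}$, and $\{Y$ exceeds $u_+\}$ becomes $\{x+X$ goes below $0\}$. With the notation $\tau_a^+$ and $\tau_0^-$ for the first passage of $x+X$ above $a$ and below $0$, we get
\[
\delta_-(u_-,u_+,\beta)=\mathbb{E}_x\bigl(e^{-\beta\tau_a^+};\tau_a^+<\tau_0^-\bigr),\qquad
\delta_+(u_-,u_+,\beta)=\mathbb{E}_x\bigl(e^{-\beta\tau_0^-};\tau_0^-<\tau_a^+\bigr).
\]
Here $\mathbb{P}(\tau\leqslant T_\beta)=\mathbb{E}e^{-\beta\tau}$ by independence of $T_\beta$, with the convention $e^{-\beta\cdot\infty}=0$ (and, for $\beta=0$, restricted to $\{\tau<\infty\}$).

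The first identity is the classical one: $\mathbb{E}_x(e^{-\beta\tau_a^+};\tau_a^+<\tau_0^-)=W^{(\beta)}(x)/W^{(\beta)}(a)$. To prove it directly, I will show that $M_t:=e^{-\beta t}W^{(\beta)}(x+X(t))$ stopped at $\tau_0^-\wedge\tau_a^+$ is a bounded martingale. The Laplace-transform definition of $W^{(\beta)}$ gives exactly the generator identity $(\mathcal{L}-\beta)W^{(\beta)}=0$ on $(0,a)$. I will then use two boundary facts. First, $X$ creeps upward, so $x+X(\tau_a^+)=a$. Second, at $\tau_0^-$ the process sits at or below $0$, where $W^{(\beta)}$ vanishes. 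Optional stopping then yields the formula.

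The second identity follows similarly from the martingale $e^{-\beta t}Z^{(\beta)}(x+X(t))$, using $Z^{(\beta)}\equiv1$ on $(-\infty,0]$. Optional stopping gives
\[
Z^{(\beta)}(x)=\mathbb{E}_x\bigl(e^{-\beta\tau_0^-};\tau_0^-<\tau_a^+\bigr)+Z^{(\beta)}(a)\,\mathbb{E}_x\bigl(e^{-\beta\tau_a^+};\tau_a^+<\tau_0^-\bigr).
\]
Substituting the first identity and $x=u_+$ produces \eqref{eq: delta_plus}.

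The hard step is rigorously justifying the martingale and boundary properties. Here $X=-Y$ is a negative drift-free compound Poisson process plus a positive drift $-r>0$, so it has bounded variation. For such processes, $W^{(\beta)}(0)=1/(-r)>0$ and $W^{(\beta)}$ is discontinuous at $0$, so some care is needed. In particular, for $x=u_+=0$ the process must be ``killed'' immediately. This boundary case is handled by the convention that $\tau_0^-$ uses strict inequality (matching $\tau(0)$, which is positive since $Y$ initially drifts down), together with right-continuity of $W^{(\beta)}$. Rather than redo this analysis, I would simply cite Kyprianou's Theorem 8.1, with $\beta\geqslant0$ and $0\leqslant x\leqslant a$. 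The only remaining work is to check that the sign and strict/non-strict inequality conventions in \eqref{eq: def delta_min}–\eqref{eq: def delta_plus} agree with the reflection. Ties between $\sigma$ and $\tau$ have probability zero since $u_-+u_+>0$, so $\leqslant$ versus $<$ is immaterial.
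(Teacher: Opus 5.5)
Your proposal is correct and is essentially the paper's proof: the paper likewise just applies Kyprianou's Theorem 8.1(iii) to $-Y$ with shifted boundaries (start at $u_+$, upper barrier $u_-+u_+$), which is exactly the reflection-and-shift you carry out, and your optional martingale sketch does not change this. One correction to that aside: because $-Y$ has bounded variation with positive drift $-r$, the process started at $0$ is \emph{not} killed immediately (this is why $W^{(\beta)}(0)=1/(-r)>0$), and since it can only pass below $0$ by a jump, it sits strictly below $0$ at $\tau_0^-$, even though $W^{(\beta)}$ itself does not vanish at $0$.
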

	
	\begin{proof}
		These identities follow from the standard two-sided exit identities for
		spectrally one-sided L\'evy processes; see, e.g.,
		\cite[Theorem 8.1(iii)]{KYP}. The result there is stated for spectrally negative
		L\'evy processes and arbitrary initial states, so applying it to $-Y$ and
		shifting the boundaries gives \eqref{eq: delta_min}--\eqref{eq: delta_plus}.
	\end{proof}

	The following identities are also standard consequences of the
	one-sided exit identities for spectrally one-sided L\'evy processes; see,
	e.g., \cite[Theorem 8.1(ii)]{KYP} and \cite[Equation (8.2)]{KYP}.
	
	\begin{lemma}\label{lemma: Levy exit probabilities}
		For $u_-, u_+\geqslant0$ and $\beta\geqslant0$,
		\begin{align}
			\mathbb{P}\bigl(\sigma(u_-)\leqslant T_\beta\bigr)
			&=
			e^{-\psi(\beta)u_-}, \label{eq: sigma before killing}\\
			\mathbb{P}\bigl(\tau(u_+)\leqslant T_\beta\bigr)
			&=
			Z^{(\beta)}(u_+)
			-
			\frac{\beta}{\psi(\beta)}W^{(\beta)}(u_+).
			\label{eq: tau before killing}
		\end{align}
	\end{lemma}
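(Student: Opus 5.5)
The plan is to obtain both identities from the one-sided exit results in \cite[Theorem 8.1(ii)]{KYP} and \cite[Equation (8.2)]{KYP}, which are stated for a spectrally negative L\'evy process $X$. We apply them to $X:=-Y$, whose Laplace exponent is $\log\mathbb{E}e^{\alpha X(1)}=\varphi(\alpha)$. Under this reflection $\sigma(u_-)$ becomes the first passage of $X$ above $u_-$, and $\tau(u_+)$ becomes the first passage of $X$ below $-u_+$. Since $T_\beta$ is independent, $\mathbb{P}(\zeta\leqslant T_\beta)=\mathbb{E}\bigl(e^{-\beta\zeta}\mathbbm{1}\{\zeta<\infty\}\bigr)$ for any stopping time $\zeta$. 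It therefore suffices to compute the Laplace transforms of these first-passage times, with the convention that for $\beta=0$ they reduce to the passage probabilities.

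For \eqref{eq: sigma before killing} we use that $X$ has no positive jumps, so it passes above $u_-$ continuously and $X(\sigma(u_-))=u_-$. For $\beta>0$, $\psi(\beta)>0$ is the root of $\varphi(\psi(\beta))=\beta$, and the process $M(t):=\exp\{\psi(\beta)X(t)-\beta t\}$ is a martingale. Optional stopping at $\sigma(u_-)\wedge t$ gives $\mathbb{E}\,M(\sigma(u_-)\wedge t)=1$. As $t\to\infty$, on the event $\{\sigma(u_-)=\infty\}$ we have $M(t)\leqslant e^{\psi(\beta)u_--\beta t}\to0$, and $M$ is bounded by $e^{\psi(\beta)u_-}$ before $\sigma(u_-)$. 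Dominated convergence then yields $\mathbb{E}\,e^{-\beta\sigma(u_-)}=e^{-\psi(\beta)u_-}$.

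The case $\beta=0$ needs more care, and this is the step I would check most closely. Here $\psi(0)$ is the largest zero of $\varphi$. When $Y$ has positive drift, i.e.\ $\varphi'(0+)<0$, this root is strictly positive. I would either repeat the martingale argument at $\beta=0$ with that root, or simply let $\beta\downarrow0$, using monotone convergence on the left-hand side and continuity of $\psi$ at $0$ on the right-hand side.

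For \eqref{eq: tau before killing} we quote \cite[Theorem 8.1(ii)]{KYP}, which says that for $X$ started at $x\geqslant0$ and $\tau_0^-:=\inf\{t:X(t)<0\}$,
\[
\mathbb{E}_x\bigl(e^{-\beta\tau_0^-}\mathbbm{1}\{\tau_0^-<\infty\}\bigr)=Z^{(\beta)}(x)-\frac{\beta}{\psi(\beta)}W^{(\beta)}(x).
\]
The scale functions of $X$ coincide with ours, because both are defined through $\varphi$. Shifting the starting point $x=u_+$ to the origin turns $\tau_0^-$ into the first time $X$ drops below $-u_+$, which is exactly $\tau(u_+)$ for $Y$. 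The strict versus non-strict inequality in the definition of $\tau(u_+)$ plays no role: for a compound Poisson process with drift, hitting the level exactly without crossing it is a null event in this direction, since $Y$ moves up by jumps or continuously with negative drift and a.s.\ does not stay at a level.

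For $\beta=0$, the ratio $\beta/\psi(\beta)$ must be read as its limit $\lim_{\beta\downarrow0}\beta/\psi(\beta)=1/\psi'(0+)=\max\{\varphi'(0+),0\}$, which is Equation (8.2) in \cite{KYP}. Combined with $Z^{(0)}\equiv1$, this gives $\mathbb{P}(\tau(u_+)<\infty)=1-\max\{\varphi'(0+),0\}W(u_+)$, so the formula remains valid at $\beta=0$.
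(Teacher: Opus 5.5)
Your proposal is correct and follows essentially the same route as the paper: apply the one-sided exit results of \cite{KYP} to the spectrally negative process $-Y$ and shift the boundaries. The differences are minor. You prove the first identity directly via the exponential martingale $e^{\psi(\beta)X(t)-\beta t}$ and optional stopping rather than reading it off the Wiener--Hopf factor, and you spell out the $\beta=0$ convention that the paper leaves implicit. One small precision: $\lim_{\beta\downarrow0}\beta/\psi(\beta)=1/\psi'(0+)$ is only meaningful when $\psi(0)=0$, since if $\varphi'(0+)<0$ the limit is $0$ simply because $\psi(0)>0$; moreover, this limiting convention is stated in \cite[Theorem 8.1(ii)]{KYP} itself, whereas the paper cites Equation (8.2) of \cite{KYP} for the first identity.
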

	
	\begin{proof}
		The first identity follows immediately from the Wiener--Hopf
		decomposition; see, e.g., \cite[Equation (8.2)]{KYP}. The second is the
		standard one-sided exit identity in \cite[Theorem 8.1(ii)]{KYP}. In both
		cases, we apply the corresponding results for spectrally negative
		L\'evy processes to $-Y$, with the appropriate adjustment of the
		boundaries.
	\end{proof}
	
	In our analysis, we also require the distribution, in terms of its LST, of the overshoot of $Y$ over a level $u_+\geqslant0$, both on the event that $Y$ crosses $u_+$ before killing and on the event that it crosses $u_+$ before either killing or crossing the lower level $-u_-$. To this end, for $u_-,u_+\geqslant0$ with
	$u_-+u_+>0$, define
	\begin{align}
		\eta(u_+,\alpha,\beta)
		&:=
		\mathbb{E}\left[
		e^{-\alpha (Y(\tau(u_+))-u_+)}
		\mathbf{1}_{\{\tau(u_+)\leqslant T_\beta\}}
		\right],
		\label{eq:eta-def}\\
		\tilde\eta(u_-,u_+,\alpha,\beta)
		&:=
		\mathbb{E}\left[
		e^{-\alpha (Y(\tau(u_+))-u_+)}
		\mathbf{1}_{\{\tau(u_+)\leqslant
			\min\{\sigma(u_-),T_\beta\}\}}
		\right].
		\label{eq:eta-tilde-def}
	\end{align}
	Thus, $\eta$ and $\tilde\eta$ are the Laplace transforms of the
	corresponding (possibly defective) overshoot distributions. We denote
	their densities, when they exist, by $h(u_+,y,\beta)$ and
	$\tilde h(u_-,u_+,y,\beta)$, respectively. Below we give a closed-form
	expression for $\eta$ in terms of the primary scale function, the
	Laplace exponent, and its right-inverse, and show that
	$\tilde\eta$ admits a particularly simple representation in terms of
	$\eta$.

	\begin{lemma}\label{lemma: eta}
		For $u_+\geqslant0$ and $\alpha,\beta\geqslant0$,
		\begin{equation}\label{eq: eta}
			\eta(u_+,\alpha,\beta)
			= 
			(\varphi(\alpha)-\beta)
			\int_{u_+}^\infty e^{-\alpha (y-u_+)}W^{(\beta)}(y)\,\mathrm{d}y
			+
			\frac{\varphi(\alpha)-\beta}
			{\psi(\beta)-\alpha}
			W^{(\beta)}(u_+).
		\end{equation}
	\end{lemma}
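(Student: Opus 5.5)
Take the Laplace transform in the level $u_+$ and compare the two sides. Set
\[
F(u_+):=\eta(u_+,\alpha,\beta),\qquad \hat F(q):=\int_0^\infty e^{-qu_+}F(u_+)\,\mathrm{d}u_+ ,
\]
compute $\hat F$ with a resolvent identity, and check that the right-hand side of \eqref{eq: eta} has the same transform. Uniqueness of Laplace transforms (both sides are bounded in $u_+$) then gives the lemma for every $u_+$ by right-continuity. Fix $\beta>0$; the case $\beta=0$ follows by monotone convergence.

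\textbf{Step 1: formula for $\hat F$.} Work with the process killed at $T_\beta$. Each upcrossing of $u_+$ happens by a jump: the drift is negative, so $Y$ cannot creep upward. By the compensation formula,
\[
F(u_+)=\int_0^\infty \mathrm{d}t\, e^{-\beta t}\int \mathbb{P}\bigl(\overline{Y}(t)\leqslant u_+,\,Y(t)\in\mathrm{d}z\bigr)\,\lambda\int_{(u_+-z,\infty)}\mathbb{P}(B\in\mathrm{d}b)\,e^{-\alpha(z+b-u_+)} .
\]
The potential measure of $Y$ killed on exceeding $u_+$ is known in scale-function form, via \cite[Theorem 8.7]{KYP} applied to $-Y$. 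An alternative that avoids the resolvent density is the Wiener--Hopf factorisation at $T_\beta$. Write $\overline{Y}(T_\beta)$ for the running supremum of $Y$ up to $T_\beta$, which is independent of $Y(T_\beta)-\overline{Y}(T_\beta)$. The point is that $F(u_+)$ is determined by the joint law of the ladder height process.
- On the one hand, $\mathbb{E}e^{-q\overline{Y}(T_\beta)}=\frac{\beta(\psi(\beta)-q)}{\psi(\beta)(\beta-\varphi(q))}$. Indeed, since $Y(T_\beta)-\overline{Y}(T_\beta)$ is exponential with rate $\psi(\beta)$ (this is \eqref{eq: sigma before killing}), dividing $\mathbb{E}e^{-qY(T_\beta)}=\beta/(\beta-\varphi(q))$ by its transform gives this.
- On the other hand, one computes $\int_0^\infty e^{-qu}\,\mathbb{E}[e^{-\alpha(Y(\tau(u))-u)};\tau(u)<T_\beta]\,\mathrm{d}u$ by the standard "first passage over $u$, transformed in $u$" identity for a process with no negative jumps. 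It should factor through the ascending ladder exponent $\kappa(q)=c\,(\beta-\varphi(q))/(\psi(\beta)-q)$. Concretely, I expect
\[
\hat F(q)=\frac{1}{q-\alpha}\left(1-\frac{\kappa(\alpha)}{\kappa(q)}\right)\cdot(\text{normalisation}).
\]
This is the classical Pecherskii--Rogozin identity. It should yield
\[
\hat F(q)=\frac{1}{q-\alpha}\left(1-\frac{(\varphi(\alpha)-\beta)(\psi(\beta)-q)}{(\psi(\beta)-\alpha)(\varphi(q)-\beta)}\right).
\]

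\textbf{Step 2: transform of the right-hand side.} Take $q>\psi(\beta)$, so that $\varphi(q)>\beta$.
- The second term of \eqref{eq: eta} transforms to $\frac{\varphi(\alpha)-\beta}{(\psi(\beta)-\alpha)(\varphi(q)-\beta)}$.
- For the first term, interchange the order of integration:
\[
\int_0^\infty e^{-qu}\int_u^\infty e^{-\alpha(y-u)}W^{(\beta)}(y)\,\mathrm{d}y\,\mathrm{d}u=\int_0^\infty W^{(\beta)}(y)e^{-\alpha y}\,\frac{e^{(\alpha-q)y}-1}{\alpha-q}\,\mathrm{d}y=\frac{1}{\alpha-q}\Bigl(\frac{1}{\varphi(q)-\beta}-\frac{1}{\varphi(\alpha)-\beta}\Bigr).
\]
This step needs $\varphi(\alpha)>\beta$, i.e.\ $\alpha>\psi(\beta)$. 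Both sides of \eqref{eq: eta} are analytic in $\alpha$: for $\alpha<\psi(\beta)$ the integral in the first term diverges, but the combination should be read as analytic continuation, or the first term rewritten as $-(\varphi(\alpha)-\beta)\int_0^{u_+}\cdots$ plus a boundary term. So it suffices to prove the identity for large $\alpha$.
- Multiplying the first term by $\varphi(\alpha)-\beta$ and adding the second, elementary algebra should match the expression for $\hat F(q)$ from Step 1.

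\textbf{Main obstacle.} The hard part is Step 1: getting a clean expression for $\hat F(q)$ with the correct constants. I would first try the Pecherskii--Rogozin identity, taken from \cite{KYP} and applied to $-Y$, normalising the ladder local time through $\kappa(0)$. If the constants prove troublesome, the fallback is a direct first-step analysis. Condition on the first jump, use $\mathbb{P}(\sigma)$ from Lemma~\ref{lemma: Levy exit probabilities} for the drift segment, and derive a renewal-type equation for $F$ in $u_+$. Transforming that equation in $u_+$ gives an equation for $\hat F$ involving $\mathscr{B}$, which can be solved using \eqref{eq: def laplace exp}. The zero of $\varphi(q)-\beta$ at $q=\psi(\beta)$ fixes the unknown constant $F(0)$; this is exactly where the factor $(\psi(\beta)-\alpha)^{-1}$ comes from.
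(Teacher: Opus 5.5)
Your proof is correct, and it takes a genuinely different route from the paper. The paper gives no derivation. It imports the identity from \cite[Lemma 6]{MRS} and \cite[Corollary 4]{IP}, a first-passage result for spectrally negative Markov additive processes, applied to $-Y$ after sign and killing-rate adjustments. You instead verify the formula by transforming in the level. Pecherskii--Rogozin with $\kappa(\beta,q)\propto(\beta-\varphi(q))/(\psi(\beta)-q)$ gives $\hat F$, and the transform of the right-hand side is elementary for $\alpha,q>\psi(\beta)$. The algebra you left as ``should match'' does close, since the difference of the two transforms is
\[
\frac{\varphi(\alpha)-\beta}{\varphi(q)-\beta}\left(\frac{1}{\psi(\beta)-\alpha}-\frac{1}{q-\alpha}+\frac{\psi(\beta)-q}{(q-\alpha)(\psi(\beta)-\alpha)}\right)=0 .
\]
The citation is a one-liner that holds in a broader setting, but it leaves the `minor modifications' to the reader. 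Your argument is self-contained up to standard Wiener--Hopf facts. It also exposes something the statement glosses over: for $\alpha\leqslant\psi(\beta)$ the integral in \eqref{eq: eta} diverges, because for $\beta>0$ the function $W^{(\beta)}(y)$ grows like $e^{\psi(\beta)y}$. The formula must then be read by analytic continuation in $\alpha$, i.e.\ as $e^{\alpha u_+}\bigl(1-(\varphi(\alpha)-\beta)\int_0^{u_+}e^{-\alpha y}W^{(\beta)}(y)\,\mathrm{d}y\bigr)+\frac{\varphi(\alpha)-\beta}{\psi(\beta)-\alpha}W^{(\beta)}(u_+)$.

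Your `main obstacle' is not one. Your own bullet on $\mathbb{E}e^{-q\overline Y(T_\beta)}$ already settles Step 1 without ladder local times, and the factor `(normalisation)' should be dropped, since any constant in $\kappa$ cancels in the ratio. Put $M:=\overline Y(T_\beta)$. On $\{M>u\}=\{\tau(u)\leqslant T_\beta\}$ (a.s.), the strong Markov property and memorylessness give $M-u=(Y(\tau(u))-u)+M'$, with $M'$ an independent copy of $M$. Hence $\mathbb{E}[e^{-\alpha(M-u)};M>u]=F(u)\,\mathbb{E}e^{-\alpha M}$, and integrating against $e^{-qu}$ yields
\[
\hat F(q)=\frac{1}{q-\alpha}\left(1-\frac{\mathbb{E}e^{-qM}}{\mathbb{E}e^{-\alpha M}}\right).
\]
With your formula for $\mathbb{E}e^{-qM}$, this is exactly your displayed $\hat F$.

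Two details in that bullet need fixing. The exponential variable is $\overline Y(T_\beta)-Y(T_\beta)$, not its negative; by duality it has the law of $-\inf_{s\leqslant T_\beta}Y(s)$, which is where \eqref{eq: sigma before killing} enters. Also, the division is valid only for $q<\psi(\beta)$. The result extends to all $q\geqslant0$ because the singularity at $q=\psi(\beta)$ is removable, and you need this since Step 2 uses $q>\psi(\beta)$.

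Finally, the right-hand side of \eqref{eq: eta} is not a priori bounded in $u_+$: each of its two terms grows like $e^{\psi(\beta)u_+}$. Exponential order is enough for uniqueness of the transform on $q>\psi(\beta)$, so the argument survives. For $\beta\downarrow0$, monotone convergence handles only the left-hand side; the right-hand side needs continuity of $\beta\mapsto W^{(\beta)}$ and $\beta\mapsto\psi(\beta)$.
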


	\begin{proof}
		The expression follows from \cite[Lemma 6]{MRS} with some minor modifications; see also
		\cite[Corollary 4]{IP}. The latter reference treats spectrally negative
		Markov additive processes and uses implicit killing, so the expression
		there requires the corresponding sign and killing-rate adjustments.
	\end{proof}
	
	The two-sided overshoot transform can be obtained directly from the
	one-sided transform.
	
	\begin{lemma}\label{lemma: eta_tilde}
		For $u_-,u_+\geqslant0$ with $u_-+u_+>0$, $\alpha\geqslant0$ and
		$\beta>0$,
		\begin{equation}\label{eq: eta_tilde}
			\tilde\eta(u_-,u_+,\alpha,\beta)
			=
			\eta(u_+,\alpha,\beta)
			-
			\delta_-(u_-,u_+,\beta)
			\eta(u_-+u_+,\alpha,\beta).
		\end{equation}
	\end{lemma}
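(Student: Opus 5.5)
We decompose the event in the definition of $\eta$ according to whether $Y$ reaches the lower level $-u_-$ before crossing $u_+$, and then use the strong Markov property at $\sigma(u_-)$. By definition,
\[
\eta(u_+,\alpha,\beta)=\tilde\eta(u_-,u_+,\alpha,\beta)+\mathbb{E}\left[e^{-\alpha(Y(\tau(u_+))-u_+)}\mathbf{1}_{\{\sigma(u_-)<\tau(u_+)\leqslant T_\beta\}}\right].
\]
This holds because on $\{\tau(u_+)\leqslant T_\beta\}$ either $\tau(u_+)\leqslant\sigma(u_-)$ or $\sigma(u_-)<\tau(u_+)$. The two stopping times cannot coincide: at $\tau(u_+)$ we have $Y\geqslant u_+>-u_-$ when $u_-+u_+>0$. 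The case $u_-=0$, where $\sigma(0)=0$ trivially, is consistent with this and needs no separate treatment.

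For the second term, we condition on $\mathcal{F}_{\sigma(u_-)}$ on the event $\{\sigma(u_-)\leqslant\min\{\tau(u_+),T_\beta\}\}$. There are three facts to use.
\begin{itemize}
\item Since $Y$ has no negative jumps, it creeps downward, so $Y(\sigma(u_-))=-u_-$ exactly.
\item By memorylessness of $T_\beta$ and independence, the residual killing time $T_\beta-\sigma(u_-)$ is again $\mathrm{Exp}(\beta)$ and independent of $\mathcal{F}_{\sigma(u_-)}$.
\item By the strong Markov property and spatial homogeneity, the process $Y(\sigma(u_-)+\cdot)+u_-$ is an independent copy of $Y$ started at $0$. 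It must now rise by $u_-+u_+$ to exceed $u_+$, and its overshoot over $u_-+u_+$ equals the original overshoot over $u_+$.
\end{itemize}
Together these give that the second term equals
\[
\mathbb{P}\bigl(\sigma(u_-)\leqslant\min\{\tau(u_+),T_\beta\}\bigr)\,\eta(u_-+u_+,\alpha,\beta)=\delta_-(u_-,u_+,\beta)\,\eta(u_-+u_+,\alpha,\beta),
\]
and rearranging yields \eqref{eq: eta_tilde}.

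The only delicate points are the justification that the overshoot and killing structure regenerates exactly at $\sigma(u_-)$, and the boundary events. The regeneration relies on creeping ($Y(\sigma(u_-))=-u_-$ with no undershoot) and on the loss of memory of $T_\beta$. For the boundary events, ties $\sigma(u_-)=T_\beta$ have probability zero because $T_\beta$ is continuous and independent; this is why $\beta>0$ is assumed, so that $T_\beta$ is a genuine random time. A set $\{\sigma(u_-)=\tau(u_+)\}$ would be empty. Hence the inequality convention ($\leqslant$ versus $<$) in $\delta_-$ is immaterial. No use of the explicit scale function expressions from Lemma~\ref{lemma: deltas} or Lemma~\ref{lemma: eta} is needed.
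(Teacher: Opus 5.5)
Your proof is correct and follows the paper's route. Like the paper, you split $\eta(u_+,\alpha,\beta)$ according to whether $-u_-$ is reached before $u_+$, then apply the strong Markov property at $\sigma(u_-)$ and shift by $u_-+u_+$ to identify the remaining term as $\delta_-(u_-,u_+,\beta)\,\eta(u_-+u_+,\alpha,\beta)$; your remarks on creeping, the memorylessness of $T_\beta$, and the null tie events make explicit what the paper's short proof leaves implicit.
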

	
	\begin{proof}
		Decompose according to whether the lower boundary is reached before
		$u_+$:
		\begin{align*}
			\tilde\eta(u_-,u_+,\alpha,\beta)
			&=
			\eta(u_+,\alpha,\beta)-
			\mathbb{E}\left[
			e^{-\alpha(Y(\tau(u_+))-u_+)}
			\mathbf{1}_{\{\sigma(u_-)\leqslant\tau(u_+)\leqslant T_\beta\}}
			\right].
		\end{align*}
		On the event in the second term, $Y$ reaches $-u_-$ before $u_+$ and,
		by the strong Markov property, the process after this time is distributed
		as a fresh copy of $Y$ started from $-u_-$. Shifting the process by
		$u_-+u_+$, the remaining overshoot problem is therefore described by
		$\eta(u_-+u_+,\alpha,\beta)$. Hence
		\[
		\mathbb{E}\left[
		e^{-\alpha(Y(\tau(u_+))-u_+)}
		\mathbf{1}_{\{\sigma(u_-)\leqslant\tau(u_+)\leqslant T_\beta\}}
		\right]
		=
		\delta_-(u_-,u_+,\beta)
		\eta(u_-+u_+,\alpha,\beta),
		\]
		which proves \eqref{eq: eta_tilde}.
	\end{proof}
	\subsection{Phase-type set-up}
	
	We use phase-type distributions throughout the remainder of the paper.
	Let $B$ be a phase-type random variable with $d\geqslant1$ transient
	phases, initial distribution $\bs a$ and subgenerator $\bs S$, and let
	$
	\bs s:=-\bs S\bs 1.
	$
	Thus, $a_i\geqslant0$ and $\bs a^\top\bs 1=1$. We write
	\[
	B\sim\mathbb{P}{\rm h}_d(\bs a,\bs S).
	\]
	The following standard identities will be used repeatedly; see
	\cite[Proposition III.4.1]{ASM2}.
	
	\begin{lemma}\label{lemma: dens and LST phase-type}
		For $B\sim\mathbb{P}{\rm h}_d(\bs a,\bs S)$, with
		$\bs s=-\bs S\bs 1$, its density, distribution function, and LST are
		\[
		f_B(y)=\bs a^\top e^{\bs S y}\bs s,\qquad
		F_B(y)=1-\bs a^\top e^{\bs S y}\bs 1,\qquad
		\mathscr B(\alpha)=\bs a^\top(\alpha\bs I-\bs S)^{-1}\bs s,
		\]
		for $y>0$, $y\geqslant0$, and $\alpha\geqslant0$, respectively.
	\end{lemma}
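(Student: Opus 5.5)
The plan is to regard $B$ as the absorption time of a continuous-time Markov chain $(J(t))_{t\geqslant0}$ on the transient phases $\{1,\dots,d\}$ plus one absorbing state $0$. The chain has initial law $\bs a$ and generator $\begin{pmatrix}\bs S & \bs s\\ \bs 0^\top & 0\end{pmatrix}$, and the rows sum to zero because $\bs s=-\bs S\bs 1$. We set $B=\inf\{t\geqslant 0: J(t)=0\}$. The first step is the standard fact that the transition matrix restricted to the transient states is $e^{\bs S y}$. This follows from Kolmogorov's forward equation $\tfrac{\mathrm d}{\mathrm dy}P(y)=P(y)Q$: because state $0$ is absorbing, the transient block of $P(y)$ satisfies $\tfrac{\mathrm d}{\mathrm dy}P_{TT}(y)=P_{TT}(y)\bs S$ with $P_{TT}(0)=\bs I$.

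The distribution function comes out first. We have $\{B>y\}=\{J(y)\neq 0\}$, so $\mathbb P(B>y)=\bs a^\top e^{\bs S y}\bs 1$ and therefore $F_B(y)=1-\bs a^\top e^{\bs S y}\bs 1$. Differentiating in $y$ gives the density
\[
f_B(y)=-\bs a^\top e^{\bs S y}\bs S\bs 1=\bs a^\top e^{\bs S y}\bs s ,
\]
using that $\bs S$ commutes with $e^{\bs S y}$. The same formula also has a direct reading: the process must be in phase $i$ at time $y$ and then exit at rate $s_i$.

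For the LST we compute $\int_0^\infty e^{-\alpha y}\bs a^\top e^{\bs S y}\bs s\,\mathrm dy=\bs a^\top\Bigl(\int_0^\infty e^{(\bs S-\alpha\bs I)y}\,\mathrm dy\Bigr)\bs s$. This integral equals $(\alpha\bs I-\bs S)^{-1}$ provided $e^{(\bs S-\alpha\bs I)y}\to\bs 0$ as $y\to\infty$, since $\tfrac{\mathrm d}{\mathrm dy}e^{(\bs S-\alpha \bs I)y}=(\bs S-\alpha\bs I)e^{(\bs S-\alpha\bs I)y}$ and we can integrate exactly. Note that $\alpha=0$ is included in the claim.

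The main obstacle is the requirement that $\bs S$ be nonsingular with all eigenvalues having strictly negative real part. This is where the assumption that all phases are transient enters, meaning absorption happens almost surely from every phase. I would argue as follows. Transience gives $e^{\bs S y}\bs 1\to\bs 0$ entrywise. Since $e^{\bs S y}$ is a nonnegative matrix, this forces $e^{\bs S y}\to\bs 0$. Consequently the spectral radius of $e^{\bs S}$ is less than $1$, so every eigenvalue of $\bs S$ has negative real part. For $\alpha\geqslant0$ the same then holds for $\bs S-\alpha\bs I$, which also makes $\alpha\bs I-\bs S$ invertible. This yields $\mathscr B(\alpha)=\bs a^\top(\alpha\bs I-\bs S)^{-1}\bs s$.
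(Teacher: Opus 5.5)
Your proposal is correct and is essentially the standard absorbing-Markov-chain proof behind the textbook proposition the paper cites in place of a proof of its own. You obtain the survival function $\bs a^\top e^{\bs S y}\bs 1$ from the transient block of the transition matrix, the density by differentiation, and the LST by integrating $e^{(\bs S-\alpha\bs I)y}$; your handling of the one delicate point (transience plus nonnegativity of $e^{\bs S y}$ forcing every eigenvalue of $\bs S$ to have negative real part, so that $\alpha\bs I-\bs S$ is invertible and $e^{(\bs S-\alpha\bs I)y}\to\bs 0$ for all $\alpha\geqslant0$, including $\alpha=0$) is sound.
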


	For phase-type jumps, the Laplace exponent of $Y$ therefore becomes
	\begin{equation*}
		\varphi(\alpha)
		=
		-r\alpha
		-\lambda\left(
		1-\bs a^\top(\alpha\bs I-\bs S)^{-1}\bs s
		\right).
	\end{equation*}
	Consequently, $\varphi(\alpha)$ can be reduced to a fraction of polynomials in $\alpha$, of which the numerator is of degree $d+1$, 
	and therefore $\varphi(\alpha)=\beta$ reduces to a polynomial equation in $\alpha$ of degree $d+1$. For $\beta>0$, its roots have the following location.
	
	\begin{lemma}\label{lemma: roots phi = beta}
		For $\beta>0$, the equation $\varphi(\alpha)=\beta$ has $d$ roots in
		$\{\alpha\in\mathbb C:\Re(\alpha)<0\}$ and one root in
		$\{\alpha\in\mathbb C:\Re(\alpha)>0\}$.
	\end{lemma}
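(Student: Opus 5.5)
The plan is to clear denominators and count zeros by Rouché's theorem. The equation $\varphi(\alpha)=\beta$ reads
\[
-r\alpha-\lambda-\beta+\lambda\,\bs a^\top(\alpha\bs I-\bs S)^{-1}\bs s=0 .
\]
Multiplying by $\det(\alpha\bs I-\bs S)$ turns this into a polynomial equation of degree $d+1$. Its leading coefficient is $-r>0$, so it has exactly $d+1$ roots in $\mathbb C$, counted with multiplicity. We must also make sure that none of these roots is spurious, i.e.\ an eigenvalue of $\bs S$ at which the rational function has a pole. If such a common zero appears, we pass to an irreducible representation of $\mathscr B$ and note that the degree count only drops in a way compatible with the argument below. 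Alternatively, we count zeros of the meromorphic function $\varphi-\beta$ directly, one half-plane at a time.

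\emph{Right half-plane: one root.} On $\Re\alpha\geqslant0$ the function $\varphi(\alpha)-\beta=-r\alpha-(\lambda+\beta)+\lambda\mathscr B(\alpha)$ is analytic, because $\mathscr B$ is an LST. Set $f(\alpha)=-r\alpha-(\lambda+\beta)$ and $g(\alpha)=\lambda\mathscr B(\alpha)$. Note that $|g|\leqslant\lambda$ on $\Re\alpha\geqslant0$. On the imaginary axis, $|f(i\omega)|\geqslant\lambda+\beta>\lambda\geqslant|g|$. On a large semicircle of radius $R$ in the right half-plane, $|f|\sim|r|R$ dominates as well. By Rouché's theorem, $f+g$ has the same number of zeros in the right half-plane as $f$. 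Since $f$ has exactly one zero there, at $(\lambda+\beta)/(-r)>0$, this gives exactly one root with $\Re\alpha>0$. The strict inequality also shows that there are no roots on the imaginary axis. This root is $\psi(\beta)$, which is real and positive by convexity of $\varphi$ on $[0,\infty)$, since $\varphi(0)=0<\beta$ and $\varphi(\alpha)\to\infty$.

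\emph{Left half-plane: $d$ roots.} The cleared polynomial has degree $d+1$, exactly one root in the open right half-plane, and no roots on the imaginary axis. Hence the remaining $d$ roots lie in $\Re\alpha<0$.

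The main obstacle is the possible cancellation between the numerator and $\det(\alpha\bs I-\bs S)$, i.e.\ a non-minimal representation. Such cancellation would make a true root of the polynomial fail to be a root of $\varphi=\beta$. To handle it, I would note that a common zero $\alpha_0$ of $\det(\alpha\bs I-\bs S)$ and the numerator must satisfy $\Re\alpha_0<0$, because the eigenvalues of a subgenerator have negative real part. So the right-half-plane count is unaffected. The statement about $d$ roots is then read as a statement about the degree-$(d+1)$ polynomial equation, which is how the paper phrases the reduction, counted with multiplicity. The natural genericity assumption is that these roots are distinct and are not eigenvalues of $\bs S$, presumably used later.
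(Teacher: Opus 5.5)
Your argument is correct, and it takes a different route from the paper simply because the paper gives no argument of its own: it defers to \cite[\S 3.1]{IBM}.

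Your proof is self-contained. The only analytic inputs are two facts:
\begin{itemize}
\item $|\mathscr B(\alpha)|\leqslant 1$ on $\Re\alpha\geqslant0$;
\item $p(\alpha)=\det(\alpha\bs I-\bs S)$ has no zeros on $\Re\alpha\geqslant0$, since the eigenvalues of $\bs S$ lie in $\Re\alpha<0$.
\end{itemize}
Rouché's theorem then gives exactly one zero of $\tilde q(\alpha)=(\varphi(\alpha)-\beta)p(\alpha)$ in the open right half-plane and none on the imaginary axis. Because $\tilde q$ has degree $d+1$ with leading coefficient $-r\neq0$, the remaining $d$ zeros, counted with multiplicity, must lie in $\Re\alpha<0$.

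You are also right that the left-half-plane count concerns the cleared polynomial, not the meromorphic equation. If the representation is not minimal, a root of $\tilde q$ can be a common zero with $p$. In that case $\varphi(\alpha)=\beta$ has fewer than $d$ genuine roots in $\Re\alpha<0$. The paper handles this only implicitly, through the extra hypotheses of Lemma~\ref{lemma:B invertible} (roots are not eigenvalues of $\bs S$) and the minimality used in its proof.

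One small point: you do not need convexity to identify the right-half-plane root as the real number $\psi(\beta)$. Either of the following suffices:
\begin{itemize}
\item the intermediate value theorem on $[0,\infty)$, using $\varphi(0)=0<\beta$ and $\varphi(\alpha)\to\infty$;
\item the fact that non-real roots of the real polynomial $\tilde q$ come in conjugate pairs, which is incompatible with a unique root in that half-plane.
\end{itemize}
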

	
	\begin{proof}
		See, e.g., \cite[\S 3.1]{IBM}.
	\end{proof}
	
	\begin{assumption}\label{assump: distinct roots}
		For $\beta>0$, all roots of $\varphi(\alpha)=\beta$ are distinct.
	\end{assumption}
	
	This assumption is imposed throughout the remainder of the paper. It is
	a standard genericity condition in the analysis of phase-type models:
	multiple roots arise only for exceptional parameter configurations and
	are therefore avoided under an arbitrarily small perturbation of the
	model parameters. See also the discussion (just above Section 3.2) in \cite{ASM}. The assumption substantially simplifies the
	expressions below; repeated roots can in principle be handled by the
	usual limiting arguments, at the cost of additional polynomial factors
	in the resulting expressions.
	
	\section{Auxiliary objects for phase-type jumps}\label{sec: aux for phase-type}
	
	In this section we derive the auxiliary objects introduced in
	Section~\ref{sec: prelim} for CPPs with phase-type jumps. Throughout,
	$B\sim{\mathbb P}{\rm h}_{d}(\bs a,\bs S)$, where
	$\bs a\in[0,1]^d$ is the initial distribution and $\bs S$ is the
	generator matrix, with exit vector $\bs s:=-\bs S\bs 1$.
	
	By Lemma~\ref{lemma: roots phi = beta}, $\varphi(\alpha)-\beta$ has
	$d+1$ roots. Denote them by
	$\alpha_1(\beta),\ldots,\alpha_{d+1}(\beta)$, with
	$\alpha_{d+1}(\beta)=\psi(\beta)$ the unique root with positive real part,
	and write $\alpha_k\equiv\alpha_k(\beta)$. Under
	Assumption~\ref{assump: distinct roots}, partial fractions give
	\[
	\frac{1}{\varphi(\alpha)-\beta}
	=
	\sum_{k=1}^{d+1}\frac{c_k}{\alpha-\alpha_k},
	\]
	for suitable constants $c_k=c_k(\beta)$. Hence,
	\begin{equation}\label{eq: W_y phase type}
		W^{(\beta)}(y)=\sum_{k=1}^{d+1}c_ke^{\alpha_k y}.
	\end{equation}
	Moreover, since
	$\sum_{k=1}^{d+1}c_k/\alpha_k=\beta^{-1}$, we obtain
	\begin{equation}\label{eq: Z_u phase type}
		Z^{(\beta)}(u)
		=
		1+\beta\sum_{k=1}^{d+1}\frac{c_k}{\alpha_k}
		\left(e^{\alpha_k u}-1\right)
		=
		\beta\sum_{k=1}^{d+1}\frac{c_k}{\alpha_k}e^{\alpha_k u}.
	\end{equation}
	Below we use these representations to derive the overshoot transforms
	$\eta(u_+,\alpha,\beta)$ and
	$\tilde\eta(u_-,u_+,\alpha,\beta)$, their densities, and the associated
	hitting probabilities $\delta_-(u_-,u_+,\beta)$ and
	$\delta_+(u_-,u_+,\beta)$. We present two derivations. The first,
	based on the scale functions, follows directly by substituting
	\eqref{eq: W_y phase type} into the expressions for $\eta$ and
	$\tilde\eta$ in Lemmas~\ref{lemma: eta} and~\ref{lemma: eta_tilde}.
	The second follows the approach of \cite{ASM}, using the Kella--Whitt
	martingale to obtain systems of linear equations whose solutions yield
	the overshoot transforms and their densities.
	
	Both approaches have advantages. The scale-function approach is
	particularly simple, requiring only the roots of the Cram\'er--Lundberg
	equation, but yields the overshoot densities only implicitly through
	Laplace inversion. The martingale approach requires a matrix inversion,
	which can become computationally demanding for phase-type distributions
	with many phases, but provides explicit expressions for the overshoot
	densities in terms of the model parameters and thus avoids numerical
	Laplace inversion. All identified objects are used in the next sections.

	\subsection{Overshoot {above \texorpdfstring{$u_+$}{}} before killing}\label{sec: overshoot phase type}
	In this subsection we derive the transform of the overshoot
	$\eta(u_+,\alpha,\beta)$ and characterize the corresponding density
	$h(u_+,y,\beta)$ for phase-type distributed jumps using two
	complementary approaches.
	
	\subsubsection*{Scale functions}
	
	With the identities
	\[
	\sum_{k=1}^{d+1}\frac{c_k}{\alpha-\alpha_k}
	=\frac{1}{\varphi(\alpha)-\beta}
	\qquad\text{and}\qquad
	\psi(\beta)=\alpha_{d+1},
	\]
	substituting \eqref{eq: W_y phase type} into \eqref{eq: eta} and
	evaluating the integral gives
	\begin{equation}\label{eq: eta scale functions}
		\eta(u_+,\alpha,\beta)
		=
		(\varphi(\alpha)-\beta)
		\sum_{k=1}^{d}
		c_k
		\left(
		\frac{1}{\alpha-\alpha_k}
		-
		\frac{1}{\alpha-\alpha_{d+1}}
		\right)
		e^{\alpha_k u_+}.
	\end{equation}

	\subsubsection*{Martingales}
	
	Let $E_{u_+,j}$ denote the event that the level $u_+$ is crossed by a
	jump while that jump is in phase $j\in[d]$. This is well defined by
	viewing a phase-type jump as a sequence of exponential phases. Define
	\begin{equation}\label{eq: def p_i}
		\mathfrak{p}_j(u_+,\beta)
		:= \mathbb{P}(\tau(u_+)\leqslant T_\beta,E_{u_+,j})
		= \mathbb{E}\!\left[
		e^{-\beta\tau(u_+)}
		{\bs 1}{\{\tau(u_+)<\infty,E_{u_+,j}\}}
		\right],
	\end{equation}
	and let $\bs{\mathfrak p}(u_+,\beta)$ denote the corresponding
	$d$-dimensional vector. Following \cite[\S 4]{ASM}, the Kella--Whitt martingale yields,
	for the roots $\alpha_k$ with $\Re(\alpha_k)<0$,
	\begin{equation}\label{eq: sys eqs p_j}
		1 =
		e^{-\alpha_k u_+}
		\sum_{j=1}^d
		\mathfrak{p}_j(u_+,\beta)\mathscr{B}_j(\alpha_k),
		\qquad k=1,\ldots,d.
	\end{equation}
	
	Define
	\[
	\bs{\theta}(u_+,\beta)
	:=
	\bigl(e^{\alpha_1u_+},\ldots,e^{\alpha_du_+}\bigr)^\top,
	\qquad
	\bs B_{kj}(\beta):=\mathscr{B}_j(\alpha_k).
	\]
	Then \eqref{eq: sys eqs p_j} becomes
	$
	\bs B\,\bs{\mathfrak p}(u_+,\beta)
	=
	\bs{\theta}(u_+,\beta),$
	and hence, noting that by Lemma \ref{lemma:B invertible} in Appendix \ref{app: invertibility matrices} the matrix $\bs B$ is nonsingular,
	\begin{equation}
		\bs{\mathfrak p}(u_+,\beta)
		=\bs B^{-1}\bs{\theta}(u_+,\beta).
	\end{equation}
	
	We can now obtain the overshoot distribution by conditioning on the
	phase in which the crossing occurs. By the strong Markov property,
	\begin{align}\label{eq: overshoot I dens martingale}
		h(u_+,y,\beta)\,\mathrm{d}y
		&=
		\sum_{j=1}^d
		\mathbb{P}\!\left(
		Y(\tau(u_+))-u_+\in\mathrm{d}y
		\,\middle|\,
		\tau(u_+)\leqslant T_\beta,E_{u_+,j}
		\right)
		\mathfrak p_j(u_+,\beta) \notag\\
		&=
		\sum_{j=1}^d
		\left(\bs e_j^\top e^{\bs S y}\bs s\right)
		\mathfrak p_j(u_+,\beta)\,\mathrm{d}y \notag\\
		&=
		\bs{\mathfrak p}(u_+,\beta)^\top
		e^{\bs S y}\bs s\,\mathrm{d}y.
	\end{align}
	Thus, conditional on crossing before killing, the overshoot is again
	phase-type with generator $\bs S$ and initial distribution proportional
	to $\bs{\mathfrak p}(u_+,\beta)$. Its LST therefore follows directly
	from Lemma~\ref{lemma: dens and LST phase-type}:
	\begin{equation}\label{eq: eta martingales}
		\eta(u_+,\alpha,\beta)
		=
		\bs{\mathfrak p}(u_+,\beta)^\top
		(\alpha\bs I-\bs S)^{-1}\bs s.
	\end{equation}
	
	We use the martingale representation in the remainder of the paper,
	as it provides the overshoot density explicitly. In
	Appendix~\ref{app: equal etas}, we show that it agrees with the
	scale-function representation in \eqref{eq: eta scale functions}.

	\subsection{Overshoot {above \texorpdfstring{$u_+$}{} before killing or dropping below \texorpdfstring{$-u_-$}{}}}
	\label{sec: overshoot II phase type}
	
	We next derive the overshoot density
	$\tilde h(u_-,u_+,y,\beta)$ and its LST
	$\tilde\eta(u_-,u_+,\alpha,\beta)$, together with the hitting
	probabilities $\delta_-(u_-,u_+,\beta)$ and
	$\delta_+(u_-,u_+,\beta)$, for phase-type jumps. As before, we use
	two approaches: scale functions and the Kella--Whitt martingale.
	
	\subsubsection*{Scale functions} 
	
	Substituting \eqref{eq: W_y phase type} into \eqref{eq: delta_min} gives
	\begin{align}
		\delta_-(u_-,u_+,\beta)
		&=
		\frac{W^{(\beta)}(u_+)}
		{W^{(\beta)}(u_-+u_+)}=
		\frac{
			\sum_{\ell=1}^{d+1}c_\ell e^{\alpha_\ell u_+}
		}{
			\sum_{\ell=1}^{d+1}c_\ell e^{\alpha_\ell(u_-+u_+)}
		}.
		\label{eq: delta_min scale func}
	\end{align}
	Combining Lemma~\ref{lemma: eta_tilde} with
	\eqref{eq: eta scale functions} and \eqref{eq: delta_min scale func}
	gives
	\begin{align}\label{eq: eta tilde scale functions}
		\tilde\eta(u_-,u_+,\alpha,\beta)
		={}&(\varphi(\alpha)-\beta)
		\sum_{k=1}^{d}c_k
		\left(
		\frac{1}{\alpha-\alpha_k}
		-\frac{1}{\alpha-\alpha_{d+1}}
		\right)e^{\alpha_k u_+}
		\left(
		1-
		\frac{
			\sum_{\ell=1}^{d+1}c_\ell e^{\alpha_\ell u_+}
		}{
			\sum_{\ell=1}^{d+1}c_\ell e^{\alpha_\ell(u_-+u_+)}
		}
		e^{\alpha_k u_-}
		\right).
	\end{align}
	Substituting $\alpha=0$ in \eqref{eq: eta tilde scale functions} yields $\delta_+(u_-, u_+, \beta)$.

	\subsubsection*{Martingales}
	
	Let $E_{u_+,j}$ denote the event that the level $u_+$ is crossed
	in phase $j\in[d]$, and define
	\begin{equation}
		\mathfrak d_j(u_-,u_+,\beta)
		:=
		\mathbb P\left(
		\tau(u_+)\leqslant\min\{\sigma(u_-),T_\beta\},
		E_{u_+,j}
		\right).
	\end{equation}
	Let $\bs{\mathfrak d}(u_-,u_+,\beta)$ denote the corresponding vector.
	By the Kella--Whitt martingale argument of
	\cite[\S 3.4]{ASM}, for $k=1,\ldots,d+1$,
	\begin{equation}\label{eq: sys eqs delta_j}
		1
		=
		e^{\alpha_k u_-}\delta_-(u_-,u_+,\beta)
		+
		e^{-\alpha_k u_+}
		\sum_{j=1}^{d}
		\mathfrak d_j(u_-,u_+,\beta)\mathscr B_j(\alpha_k),
	\end{equation}
	where $\mathscr B_j(\cdot)$ denotes the LST of a phase-type variable
	with generator matrix $\bs S$ and initial phase $j$. Taking
	$u_-\to\infty$ recovers \eqref{eq: sys eqs p_j}.
	
	Using $\alpha_{d+1}=\psi(\beta)$ in \eqref{eq: sys eqs delta_j} gives
	\begin{equation}\label{eq: delta_min sys eqs}
		\delta_-(u_-,u_+,\beta)
		=
		e^{-\alpha_{d+1}u_-}
		-
		e^{-\alpha_{d+1}(u_-+u_+)}
		\sum_{j=1}^{d}
		\mathfrak d_j(u_-,u_+,\beta)
		\mathscr B_j(\alpha_{d+1}).
	\end{equation}
	Substituting this expression into \eqref{eq: sys eqs delta_j} and
	simplifying gives, for $k=1,\ldots,d$,
	\begin{align}\label{eq: temp sys eqs}
		&e^{\alpha_{d+1}(u_-+u_+)}e^{\alpha_k u_+}
		-e^{\alpha_{d+1}u_+}e^{\alpha_k(u_-+u_+)}
		\nonumber\\
		&\qquad=
		\sum_{j=1}^{d}\mathfrak d_j(u_-,u_+,\beta)
		\left[
		e^{\alpha_{d+1}(u_-+u_+)}\mathscr B_j(\alpha_k)
		-
		e^{\alpha_k(u_-+u_+)}\mathscr B_j(\alpha_{d+1})
		\right].
	\end{align}
	
	Let $\bs B$ be as defined in the previous subsection and set
	$
	\bs b_*:=
	\bigl(\mathscr B_1(\alpha_{d+1}),\ldots,
	\mathscr B_d(\alpha_{d+1})\bigr)^\top.
	$
	Define
	\[
	\bs M
	:=
	e^{\alpha_{d+1}(u_-+u_+)}\bs B
	-
	\begin{pmatrix}
		e^{\alpha_1(u_-+u_+)}\\
		\vdots\\
		e^{\alpha_d(u_-+u_+)}
	\end{pmatrix}
	\bs b_*^\top.
	\]
	Then \eqref{eq: temp sys eqs} can be written as
	\begin{equation}
		\bs{\mathfrak d}(u_-,u_+,\beta)
		=
		\bs M^{-1}
		\left[
		e^{\alpha_{d+1}(u_-+u_+)}
		\begin{pmatrix}
			e^{\alpha_1u_+}\\
			\vdots\\
			e^{\alpha_du_+}
		\end{pmatrix}
		-
		e^{\alpha_{d+1}u_+}
		\begin{pmatrix}
			e^{\alpha_1(u_-+u_+)}\\
			\vdots\\
			e^{\alpha_d(u_-+u_+)}
		\end{pmatrix}
		\right].
	\end{equation}
	
	As in the previous subsection, the overshoot is phase-type with
	generator $\bs S$ and initial vector $\bs{\mathfrak d}$. Hence
	\begin{align}
		\tilde h(u_-,u_+,y,\beta)
		&=
		\bs{\mathfrak d}(u_-,u_+,\beta)^\top
		e^{\bs S y}\bs s,
		\label{eq: h tilde vs frak d}\\
		\tilde\eta(u_-,u_+,\alpha,\beta)
		&=
		\bs{\mathfrak d}(u_-,u_+,\beta)^\top
		(\alpha\bs I-\bs S)^{-1}\bs s.
		\label{eq: eta tilde martingales}
	\end{align}
	Moreover,
	\begin{equation}\label{eq: delta plus vs frak d}
		\delta_+(u_-,u_+,\beta)
		=
		\sum_{j=1}^d\mathfrak d_j(u_-,u_+,\beta),
	\end{equation}
	while $\delta_-$ follows from \eqref{eq: delta_min sys eqs}.
	
	Throughout the remainder of the paper, we apply the above results to both
	$Y_1$ and $Y_2$. We indicate the underlying process by a subscript;
	for example, $\delta_{i,-}$ and $\delta_{i,+}$ denote the corresponding
	hitting probabilities for $Y_i$, $i\in\{1,2\}$. The same convention
	applies to the scale functions, stopping times, overshoot transforms,
	and densities.

	\section{Threshold-based Cram\'er-Lundberg model}\label{sec: Cramer-Lundberg model}
	In this section we focus on the first model considered in this paper, the level-dependent Cram\'er--Lundberg model. Recall that the total net cumulative claim process $Y_v$ is driven by the compound Poisson process $Y_1$ when it is below a threshold $v$, and by $Y_2$ when it is above $v$. Our goal is to evaluate the probability of ruin before an exponentially distributed epoch $T_\beta$ with parameter $\beta>0$, denoted by $p(u,v,\beta)$, as well as the ultimate ruin probability,
	\[
	p(u,v)\equiv p(u,v,0),
	\]
	for an initial surplus $u>\max\{0,v\}$.
	
	In Subsection \ref{sec: decomp}, we derive the ruin probability in terms of the scale functions introduced in Section~\ref{sec: prelim}. Subsection \ref{sec:numruin} provides numerical examples.

	\subsection{Decomposition}\label{sec: decomp}
	We characterize the ruin probability before killing and the all-time ruin
	probability by decomposing the event of ruin into distinct scenarios. The
	decomposition depends on whether the threshold $v$ is positive, negative,
	or zero. Since the case $v=0$ serves as a regenerative point for the other
	two cases, we consider it first. The main ideas underlying our
	decomposition follow those in \cite[\S 5.3]{MB}.

	\begin{remark}\label{remark: }
		{\em Throughout this section, in the derivation of the ruin probabilities we often consider the scenario `before killing', which in the case of the all-time ruin probability should be interpreted as `ever'. Specifically, we do not need a different derivation for the all-time ruin probability as the auxiliary objects introduced in Section \ref{sec: prelim} are well defined for $\beta=0$. However, since Assumption \ref{assump: distinct roots} does not always hold for $\beta=0$, the auxiliary objects can attain different forms than the ones derived in Section \ref{sec: aux for phase-type}.}
	\end{remark}
	
	\subsubsection*{Threshold at zero}
	We analyze the case $v=0$ first since it can be used as a regenerative point in the other two cases. In the following lemma we show that the ruin probability can be expressed in terms of the hitting probabilities and overshoot densities defined in Section \ref{sec: prelim}.
	
	\begin{lemma}\label{lemma: ruin prob v=0}
		For $u>0$ and $\beta\geqslant0$
		\begin{equation}\label{eq: ruin prob v=0}
			p(u,0,\beta) = \frac{\int_0^{u} h_1(0, y, \beta)\,\delta_{2,+}(y, u-y, \beta)\,\mathrm{d}y + \int_{u}^\infty h_1(0, y, \beta)\,\mathrm{d}y}{1 - \int_0^{u} h_1(0, y, \beta)\,\delta_{2,-}(y, u-y,\beta)\,\mathrm{d}y}.
		\end{equation}
	\end{lemma}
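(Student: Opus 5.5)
We prove \eqref{eq: ruin prob v=0} with a renewal argument that uses level $0$ as a regeneration point. With threshold $v=0$, the process $Y_0$ starts at $0$ and is driven by $Y_1$ as long as it stays at or below $0$. Because $Y_1$ has negative drift and only upward jumps, $Y_0$ can leave $(-\infty,0]$ only by a jump. That jump occurs at $\tau_1(0)$, and its overshoot $Y_1(\tau_1(0))$ has (defective) density $h_1(0,\cdot,\beta)$ on the event $\{\tau_1(0)\leqslant T_\beta\}$. Write $p:=p(u,0,\beta)$. Ruin means that $\overline Y_0$ exceeds $u>0$ before $T_\beta$. Ruin cannot happen while $Y_0\leqslant 0$, so ruin before $T_\beta$ requires that the first upcrossing of $0$ happens before $T_\beta$.

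We condition on the overshoot $y$ at this first upcrossing. By the memorylessness of $T_\beta$ and the strong Markov property of $Y_1$ at $\tau_1(0)$, the problem restarts from level $y$. If $y>u$, ruin has occurred at that instant; this contributes $\int_u^\infty h_1(0,y,\beta)\,\mathrm{d}y$. If $0<y\leqslant u$, the process now evolves as a fresh copy of $Y_2$ started at $y$, and it remains so while it stays above $0$. From there, exactly one of three things happens first.
\begin{itemize}
\item[$\circ$] $Y_2$ exceeds $u$ before dropping to $0$ and before killing. By the definition of $\delta_+$, with lower distance $y$ and upper distance $u-y$, this has probability $\delta_{2,+}(y,u-y,\beta)$, and ruin occurs.
\item[$\circ$] $Y_2$ reaches $0$ before exceeding $u$ and before killing, with probability $\delta_{2,-}(y,u-y,\beta)$. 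Since $Y_2$ has no downward jumps, it hits $0$ exactly (creeps), so the path is back at level $0$. At that moment $T_\beta$ has not yet rung, and by memorylessness the residual time is again $\mathrm{Exp}(\beta)$. By the strong Markov property the process regenerates exactly as at time $0$, so the conditional ruin probability is again $p$.
\item[$\circ$] Killing occurs first, and ruin does not happen.
\end{itemize}
Collecting the cases gives the renewal equation
\[
p=\int_u^\infty h_1(0,y,\beta)\,\mathrm{d}y+\int_0^u h_1(0,y,\beta)\bigl(\delta_{2,+}(y,u-y,\beta)+\delta_{2,-}(y,u-y,\beta)\,p\bigr)\mathrm{d}y .
\]
Solving this linear equation for $p$ yields \eqref{eq: ruin prob v=0}, provided the denominator is positive.

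The step needing the most care is justifying the regeneration at level $0$ and the solvability of the equation. For regeneration, the key facts are that $Y_2$ hits $0$ continuously, so the restart state is exactly $0$, and that the threshold dynamics switch back to $Y_1$ there. Whether the boundary convention at exactly $0$ belongs to $Y_1$ or $Y_2$ is immaterial, because the time spent at $0$ has Lebesgue measure zero. For solvability, we need
\[
\int_0^u h_1(0,y,\beta)\,\delta_{2,-}(y,u-y,\beta)\,\mathrm{d}y<1 .
\]
For $\beta>0$ this holds because the integral is at most $\mathbb{P}(\tau_1(0)\leqslant T_\beta)<1$. For $\beta=0$ it holds because, with positive probability, the first overshoot exceeds $u$ or $Y_2$ exceeds $u$ before returning. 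This positivity argument, rather than any computation, is what we expect to be the delicate point.

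Alternatively, one can avoid solving a fixed-point equation and justify the formula directly. Decompose by the number $n\geqslant0$ of excursions above $0$ that return to $0$ before ruin. Each such cycle independently contributes the factor $\int_0^u h_1\delta_{2,-}\,\mathrm{d}y$, so summing the geometric series gives the same expression, and the series converges exactly when the positivity condition above holds.
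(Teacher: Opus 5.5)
Your proposal is correct and follows essentially the same route as the paper's proof. Both condition on the overshoot at the first upcrossing of $0$, split into immediate ruin, ruin before returning to $0$ (giving $\delta_{2,+}$), and return to $0$ with regeneration (giving $\delta_{2,-}$), and then solve the resulting linear equation. Your justification that $\int_0^u h_1(0,y,\beta)\,\delta_{2,-}(y,u-y,\beta)\,\mathrm{d}y<1$ is a useful addition, since the paper only asserts this strict inequality. For $\beta>0$ you use $\mathbb{P}(\tau_1(0)\leqslant T_\beta)<1$, and for $\beta=0$ you use positivity of the overshoot tail or of $\delta_{2,+}$.
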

	\begin{proof}We decompose according to the first event after the process crosses the
		threshold $v=0$: either the overshoot already exceeds $u$, the process
		subsequently reaches $u$ before returning to the threshold, or it returns
		to the threshold before reaching $u$. By the strong Markov property,
		\begin{align*}
			p(u,0,\beta)
			&=
			\int_u^\infty h_1(0,y,\beta)\,\mathrm{d}y
			+\int_0^u h_1(0,y,\beta)\,
			\delta_{2,+}(y,u-y,\beta)\,\mathrm{d}y \\
			&\quad
			+\int_0^u h_1(0,y,\beta)\,
			\delta_{2,-}(y,u-y,\beta)\,\mathrm{d}y
			\,p(u,0,\beta).
		\end{align*}
		The three terms correspond, respectively, to ruin occurring directly upon
		crossing the threshold, ruin occurring before returning to the threshold,
		and returning to the threshold before ruin. Since
		\[
		0\leqslant
		\int_0^u h_1(0,y,\beta)\,
		\delta_{2,-}(y,u-y,\beta)\,\mathrm{d}y
		<1,
		\]
		we can solve for $p(u,0,\beta)$, yielding \eqref{eq: ruin prob v=0}.
	\end{proof}
	
	\subsubsection*{Positive-valued threshold}
	Here we analyze the case $v\in(0,u)$. This scenario can be interpreted as starting in the low-risk zone. In the following lemma we again give an expression of the ruin probability in terms of the hitting probabilities and the overshoot densities. 
	
	\begin{lemma}\label{lemma: ruin prob v>0}
		For $u>v>0$ and $\beta\geqslant0$
		\begin{align}\label{eq: ruin prob v>0}
			\notag p(u,v,\beta) &= \int_{u-v}^\infty h_1(v,y,\beta) \,\mathrm{d}y + \int_{0}^{u-v} \delta_{2,+}(y, u-v-y, \beta) h_1(v,y,\beta) \,\mathrm{d}y\\
			&\:\:\:\:\:\:\:+ \int_{0}^{u-v} \delta_{2,-}(y, u-v-y, \beta) h_1(v,y,\beta) \,\mathrm{d}y \times p(u-v, 0, \beta)
		\end{align}
		with $p(u-v, 0, \beta)$ as given in \eqref{eq: ruin prob v=0}.
	\end{lemma}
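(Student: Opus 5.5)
We decompose the ruin event at the first time the process exceeds the threshold $v$, as in the proof of Lemma~\ref{lemma: ruin prob v=0}. Since $v>0$ and $Y_v(0)=0<v$, the process is initially driven by $Y_1$. Because $Y_1$ has no negative jumps, it cannot reach $u$ without first exceeding $v$. This first passage happens at the stopping time $\tau_1(v)$, and only by a jump. By definition of $h_1$, the overshoot $Y_1(\tau_1(v))-v$ on $\{\tau_1(v)\leqslant T_\beta\}$ has (defective) density $h_1(v,y,\beta)$. On the complementary event no ruin occurs before $T_\beta$, so it contributes nothing.

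Next we condition on the overshoot $y$. By the memorylessness of $T_\beta$ and the strong Markov property of the threshold process at $\tau_1(v)$, the process then restarts from level $v+y$ and is driven by $Y_2$ as long as it stays above $v$. There are three scenarios.

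\emph{(i)} If $y> u-v$, the level $u$ is already exceeded at $\tau_1(v)$, so ruin has occurred. This gives $\int_{u-v}^\infty h_1(v,y,\beta)\,\mathrm{d}y$.

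\emph{(ii)} If $y\in(0,u-v)$, the process evolves as $Y_2$ shifted to start at $v+y$. It may exceed $u$ before killing and before falling to $v$. In the notation of \eqref{eq: def delta_plus} applied to $Y_2$, this is exceeding $u-v-y$ above the start before dropping below $-y$. Its probability is $\delta_{2,+}(y,u-v-y,\beta)$, and this is a ruin scenario.

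\emph{(iii)} Otherwise, with probability $\delta_{2,-}(y,u-v-y,\beta)$, the process returns to $v$ before exceeding $u$ and before killing. Since $Y_2$ has no negative jumps, it hits $v$ exactly, continuously, and at that moment $Y_v=v$. Applying the strong Markov property and memorylessness again, the remaining problem is the ruin problem for the threshold process started at the threshold with remaining distance $u-v$. Shifting levels by $v$, this is exactly the threshold-at-zero model with initial surplus $u-v$, with $Y_1$ below and $Y_2$ above. Its probability is therefore $p(u-v,0,\beta)$ from Lemma~\ref{lemma: ruin prob v=0}.

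Summing the three contributions, integrated against $h_1(v,y,\beta)\,\mathrm{d}y$, yields \eqref{eq: ruin prob v>0}.

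The main point needing care is the regeneration in step (iii). We must check that hitting $v$ from above lands exactly at $v$, which holds because there are no downward jumps. We must also check that the post-hitting dynamics coincide with those of $p(\cdot,0,\beta)$ shifted by $v$. In particular, $p(u-v,0,\beta)$ must be read as the process started at the threshold level, so the first move is governed by $Y_1$ from $0$. This matches the setting of Lemma~\ref{lemma: ruin prob v=0}, where $h_1(0,\cdot,\beta)$ is used.

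We also need the boundary case $y=u-v$ to be negligible. This holds because, under a density, the event has measure zero.

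For $\beta=0$ the same argument applies, as in the Remark, with killing read as ``never''.
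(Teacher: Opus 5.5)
Your proposal is correct and follows essentially the same route as the paper. You decompose at the first up-crossing of $v$ by $Y_1$ (overshoot density $h_1(v,\cdot,\beta)$) and split into three cases: immediate ruin, ruin under $Y_2$ before returning to $v$ (probability $\delta_{2,+}$), and return to $v$ (probability $\delta_{2,-}$) followed by strong-Markov regeneration into the threshold-at-zero problem with surplus $u-v$. Your extra checks (exact hitting of $v$, the post-regeneration dynamics starting under $Y_1$, and the null boundary $y=u-v$) are details the paper leaves implicit.
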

	\begin{proof}
		Similar to the proof of Lemma~\ref{lemma: ruin prob v=0}, we decompose
		according to the first event after the process crosses the threshold $v$.
		If the overshoot is at least $u-v$, ruin occurs immediately. Otherwise,
		the process either reaches $u$ before returning to $v$, or returns to $v$
		before reaching $u$. In the latter case, the strong Markov property reduces
		the problem to the one with threshold $0$ and initial surplus $u-v$.
		This yields the three terms in \eqref{eq: ruin prob v>0}.
	\end{proof}

	\subsubsection*{Negative-valued threshold} 
	The final case we consider is when the threshold $v$ is less than $0$, i.e., when the model is initialized in the `high-risk zone'. In this case we aim to analyze the ruin probability for a surplus $u\geqslant 0$. We specifically allow $u=0$ here since $v < 0$, in which case the ruin probability can be interpreted as the probability of the net cumulative claim process becoming positive and hence attaining a negative surplus.
	
	\begin{lemma}\label{lemma: ruin prob v<0}
		For $u \geqslant 0 > v$ and $\beta \geqslant 0$
		\begin{equation}\label{eq: ruin prob v<0}
			p(u,v,\beta) = \delta_{2,+}(-v,u,\beta) + \delta_{2,-}(-v, u, \beta) \times p(u-v,0,\beta)
		\end{equation}
		with $p(u-v,0,\beta)$ as given in \eqref{eq: ruin prob v=0}.
	\end{lemma}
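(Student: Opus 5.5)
Since $v<0$, the process $Y_v$ starts at $0>v$, that is, above the threshold, and is therefore driven by $Y_2$ from time zero. The plan is to decompose, exactly as in Lemmas~\ref{lemma: ruin prob v=0} and~\ref{lemma: ruin prob v>0}, according to which of two events happens first for $Y_2$ started at $0$: exceeding the ruin level $u$, or reaching the threshold $v$ from above. Killing at $T_\beta$ can intervene before either; on that event there is no contribution. As long as neither level has been reached, $Y_v$ coincides with $Y_2$ on the interval $(v,u]$, so the relevant stopping times are $\tau_2(u)$ and $\sigma_2(-v)$, with lower distance $u_-=-v>0$ and upper distance $u_+=u\geqslant0$. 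Note that $u_-+u_+>0$, so the quantities in \eqref{eq: def delta_min}--\eqref{eq: def delta_plus} are well defined, including for $u=0$.

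\emph{Step 1: ruin before reaching the threshold.} On the event $\{\tau_2(u)\leqslant\min\{\sigma_2(-v),T_\beta\}\}$, ruin occurs before killing. By definition \eqref{eq: def delta_plus}, this event has probability $\delta_{2,+}(-v,u,\beta)$.

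\emph{Step 2: reaching the threshold first.} On the event $\{\sigma_2(-v)\leqslant\min\{\tau_2(u),T_\beta\}\}$, which has probability $\delta_{2,-}(-v,u,\beta)$ by \eqref{eq: def delta_min}, the process has not yet been ruined. Since $Y_2$ has no negative jumps, it hits $v$ exactly, so at the stopping time the state is precisely $v$. By the strong Markov property and the memorylessness of $T_\beta$, the remaining evolution is that of $Y_v$ started at the threshold, with remaining distance $u-v$ to the ruin level. After shifting space by $-v$, this is the threshold-at-zero model with initial surplus $u-v>0$. The conditional probability of ruin before killing is therefore $p(u-v,0,\beta)$, which Lemma~\ref{lemma: ruin prob v=0} gives as \eqref{eq: ruin prob v=0}.

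\emph{Step 3: combining.} Adding the contributions of Steps 1 and 2 yields \eqref{eq: ruin prob v<0}. The main point requiring care is the regeneration in Step 2. First, the state at $\sigma_2(-v)$ must be exactly $v$, which holds by the creeping-down property noted after the definition of $\sigma(u_-)$. Second, the dynamics after that time must match the $v=0$ model, including the fact that from the threshold the process immediately moves downward under $Y_1$. This is consistent with Lemma~\ref{lemma: ruin prob v=0}, whose decomposition starts from the point where $Y_1$ subsequently up-crosses the threshold. For $\beta=0$, the same argument applies with ``before killing'' read as ``ever'', as in the Remark at the start of this section. The case $u=0$ needs no separate treatment, since it is covered by the definitions with $u_+=0$.
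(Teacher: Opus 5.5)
Your proposal is correct and follows the paper's proof. You decompose on whether $Y_2$ exceeds $u$ before hitting $v$, which gives $\delta_{2,+}(-v,u,\beta)$, or hits $v$ first, which gives $\delta_{2,-}(-v,u,\beta)$ followed by regeneration at the threshold via the strong Markov property, yielding $p(u-v,0,\beta)$. Your extra remarks on $Y_2$ hitting $v$ exactly (no negative jumps) and on the post-threshold dynamics matching Lemma~\ref{lemma: ruin prob v=0} just make explicit what the paper's proof leaves implicit.
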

	\begin{proof}
		For this case we only need to consider two scenarios: (i) $Y_v$ crosses the level $u$ before it reaches the threshold $v$ and before killing, and (ii) $Y_v$ reaches the threshold $v$ before killing and before crossing the level $u$, at which point, by the strong Markov property, we are in a scenario which is initialized at the threshold. These two scenarios correspond to the first and second term on the right-hand side of \eqref{eq: ruin prob v<0}. 
	\end{proof}

	\begin{remark}\label{rem: all-time ruin prob}
		{\em When $\mathbb{E}(Y_1(1)) > 0$ the all-time ruin probability is always equal to one. Indeed, since $\mathbb{P}(\tau_1(0) < \infty) = 1$ in this case, we have 
			\begin{align*}
				\int_0^{u} &h_1(0, y, 0)\,\delta_{2,+}(y, u-y, 0)\,\mathrm{d}y + \int_{u}^\infty h_1(0, y, 0)\,\mathrm{d}y \\
				=&\: \int_0^{u} h_1(0, y, 0)\,\left(1-\delta_{2,-}(y, u-y, 0)\right)\,\mathrm{d}y + \int_{u}^\infty h_1(0, y, 0)\,\mathrm{d}y \\
				=&\: 1 - \int_0^{u} h_1(0, y, 0)\,\delta_{2,-}(y, u-y, 0)\,\mathrm{d}y,
			\end{align*}
			so that $p(u,0,0) = 1$ by Lemma \ref{lemma: ruin prob v=0}. It is easily verified from Lemma \ref{lemma: ruin prob v>0} and Lemma \ref{lemma: ruin prob v<0} that $p(u,v,0) = 1$ for any $v < u$.}
	\end{remark}

	\subsection{Numerical example}\label{sec:numruin}
	In this subsection, we consider a specific instance of the risk model and present some numerical results for the ruin probabilities. We have obtained the all-time ruin probabilities from Lemmas \ref{lemma: ruin prob v=0}, \ref{lemma: ruin prob v>0} and \ref{lemma: ruin prob v<0} directly for $\beta=0$, while the time-dependent ruin probabilities follow from the same lemmas in combination with the numerical Laplace inversion algorithm from~\cite[\S~8]{AW2}.
	
	The CPPs $Y_i$, for $i=1,2$, that determine the dynamics of the instance each have a drift $r_i<0$, an arrival rate $\lambda_i>0$ and claim sizes that are Erlang distributed with parameters $k_i\in\mathbb{N}$ and $\mu_i > 0$. We denote each CPP $Y_i$ via a parameter tuple $(r_i, \lambda_i, k_i, \mu_i)$. For the specific instance considered in this subsection, we have used the following parameters, 
	\begin{equation*}
		(r_1, \lambda_1, k_1, \mu_1) = (-2, 1, 3, 2) \quad \text{and} \quad (r_2, \lambda_2, k_2, \mu_2) = (-3, \tfrac{1}{2}, 2, 1).
	\end{equation*}
	Therefore, the Laplace exponents are given by
	\begin{equation*}
		\varphi_1(\alpha) = 2\alpha - 1 + \frac{8}{(2+\alpha)^3} \quad \text{and} \quad \varphi_2(\alpha) = 3\alpha - \frac{1}{2} + \frac{1}{2(\alpha+1)^2}.
	\end{equation*}
	
	In Figure \ref{fig: CL model} we present ruin probabilities for the instance considered. Panel {\sc (a)} shows the ruin probability before time $t$ for fixed values of the surplus $u$ and the threshold $v$, with the dashed line denoting the all-time ruin probability. The solid lines represent the values obtained by numerical Laplace inversion, while the marks denote 95\% confidence intervals of Monte Carlo based estimates of $10^5$ samples. In Panel {\sc (b)} we have plotted the all-time ruin probability as function of the threshold for a surplus $u=10$. Furthermore, the dotted blue line and the dashed magenta line denote the all-time ruin probability of the processes $Y_1$ and $Y_2$ separately, respectively, which are obtained from \eqref{eq: tau before killing} for $\beta=0$. The marks denote 95\% confidence intervals of Monte Carlo based estimates, in this case based on 10$^4$ samples.
	
	\begin{figure}[ht]
		\centering
		\begin{subfigure}{0.49\textwidth}
			\includegraphics[width=\textwidth]{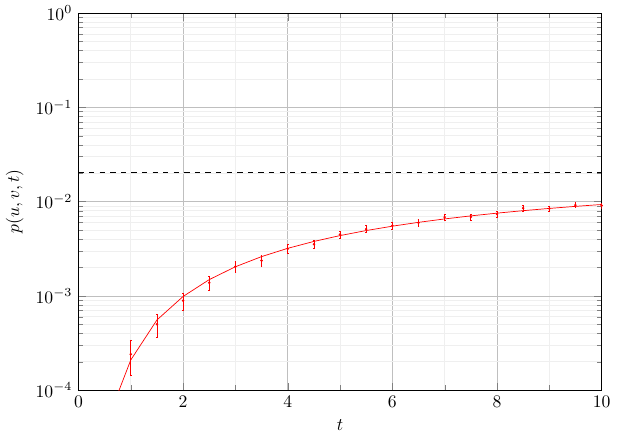}
			\caption{\label{fig: CL model t vs prob}}
		\end{subfigure}
		\hfill
		\begin{subfigure}{0.49\textwidth}
			\includegraphics[width=\textwidth]{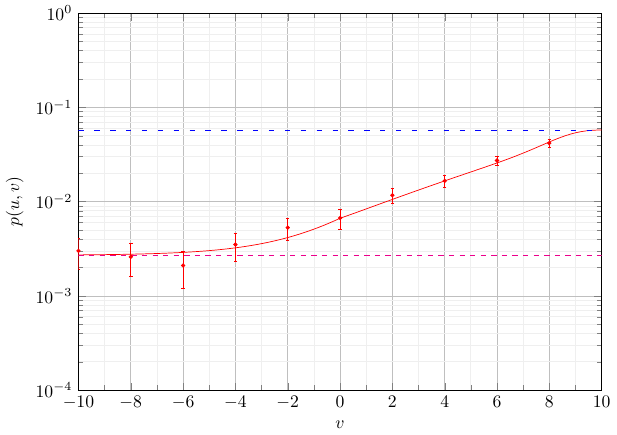}
			\caption{\label{fig: CL model v vs u}}
		\end{subfigure}
		\caption{\label{fig: CL model} Time-dependent ruin probability for $u=10$ and $v=5$ as a function of time, with the dashed gray line denoting the corresponding all-time ruin probability ({\sc a}), and the all-time ruin probability for $u=10$ as a function of $v$, with the loosely dashed blue line and the dashed magenta line denoting the all-time ruin probabilities of $Y_1$ and $Y_2$, respectively ({\sc b}).}
	\end{figure}
	
	\section{Threshold-based queueing}\label{sec: queueing model}
	In this section we study the workload process
	$(V_{\queueingthreshold}(t))_{t\geqslant0}$ of a queueing system that is
	driven by the CPP $Y_1$ when the workload is below the threshold
	$\queueingthreshold$, and by the CPP $Y_2$ when the workload is above
	$\queueingthreshold$. Our goal is to characterize the distribution of the
	workload at an exponential epoch through its Laplace--Stieltjes transform
	$\chi(x)$, as defined in \eqref{eq: def LST workload}.

	\subsection{Decomposition}
	
	To derive the LST of the workload, we introduce the following auxiliary
	quantity. For $u_-,u_+\geqslant0$ with $u_-+u_+>0$, $\alpha\geqslant0$,
	and $\beta>0$, define
	\begin{equation}\label{eq: delta_star def}
		\delta_{i,\star}(u_-,u_+)
		\equiv
		\delta_{i,\star}(u_-,u_+,\alpha,\beta)
		:=
		e^{-\alpha u_-}
		\mathbb{E}\left[
		e^{-\alpha Y_i(T_\beta)}
		\mathbf{1}_{\{T_\beta<
			\min\{\sigma_i(u_-),\tau_i(u_+)\}\}}
		\right].
	\end{equation}
	Thus, $\delta_{i,\star}(u_-,u_+)$ is the contribution to the workload
	LST from killing before the free process $Y_i$, started at $u_-$, reaches
	either $0$ or $u_-+u_+$.
	The following result, adapted from \cite[Lemma 9]{MRS}, expresses
	$\delta_{i,\star}$ in terms of the auxiliary objects introduced in
	Section~\ref{sec: prelim}.
	
	\begin{lemma}\label{lemma: delta_star}
		For $x\in[0,v]$, $i=1,2$, $\alpha\geqslant0$, and $\beta>0$,
		\begin{equation}\label{eq: delta_star decomp}
			\delta_{i,\star}(x,v-x)
			=
			\left(
			e^{-\alpha x}
			-\delta_{i,-}(x)
			-e^{-\alpha v}\eta_i(v-x,\alpha,\beta)
			+e^{-\alpha v}\delta_{i,-}(x)\eta_i(v,\alpha,\beta)
			\right)
			\frac{\beta}{\beta-\varphi_i(\alpha)}.
		\end{equation}
	\end{lemma}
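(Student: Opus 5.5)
We start from the unrestricted transform and subtract the contributions of the paths that exit before killing. Write $\tau=\tau_i(v-x)$ and $\sigma=\sigma_i(x)$. By the definition of the Laplace exponent and independence of $T_\beta$,
\[
\mathbb{E}\bigl[e^{-\alpha Y_i(T_\beta)}\bigr]=\int_0^\infty\beta e^{-\beta t}e^{\varphi_i(\alpha)t}\,\mathrm{d}t=\frac{\beta}{\beta-\varphi_i(\alpha)},
\]
valid for $\beta>\varphi_i(\alpha)$; the general case follows at the end by analytic continuation in $\alpha$. We then split $\mathbb{E}[e^{-\alpha Y_i(T_\beta)}]$ over three disjoint events: $\{T_\beta<\min\{\sigma,\tau\}\}$, $\{\sigma\leqslant\min\{\tau,T_\beta\}\}$, and $\{\tau\leqslant\min\{\sigma,T_\beta\}\}$. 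The first event gives $e^{\alpha x}\delta_{i,\star}(x,v-x)$ by definition.

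On the second and third events we apply the strong Markov property together with the memorylessness of $T_\beta$. After the stopping time, the remaining piece $Y_i(T_\beta)-Y_i(\text{stop})$ is again distributed as $Y_i(T_\beta)$. Its transform therefore factors out the same quantity $\beta/(\beta-\varphi_i(\alpha))$.

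\begin{itemize}
\item On $\{\sigma\leqslant\min\{\tau,T_\beta\}\}$ the process sits exactly at $-x$, since it creeps downward. This contributes $\delta_{i,-}(x,v-x,\beta)\,e^{\alpha x}\,\beta/(\beta-\varphi_i(\alpha))$.
\item On $\{\tau\leqslant\min\{\sigma,T_\beta\}\}$ the process sits at $v-x$ plus the overshoot. This contributes $e^{-\alpha(v-x)}\tilde\eta_i(x,v-x,\alpha,\beta)\,\beta/(\beta-\varphi_i(\alpha))$.
\end{itemize}

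Solving for $\delta_{i,\star}$ and multiplying by $e^{-\alpha x}$ gives
\[
\delta_{i,\star}(x,v-x)=\Bigl(e^{-\alpha x}-\delta_{i,-}(x,v-x,\beta)-e^{-\alpha v}\tilde\eta_i(x,v-x,\alpha,\beta)\Bigr)\frac{\beta}{\beta-\varphi_i(\alpha)}.
\]
We then substitute Lemma~\ref{lemma: eta_tilde}, namely $\tilde\eta_i(x,v-x)=\eta_i(v-x)-\delta_{i,-}(x,v-x)\eta_i(v)$. This yields exactly \eqref{eq: delta_star decomp}, reading $\delta_{i,-}(x)$ as shorthand for $\delta_{i,-}(x,v-x,\beta)$. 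The endpoint cases $x=0$ and $x=v$ are consistent with this: at $x=0$ we have $\sigma=0$, so $\delta_{i,-}=1$ and the bracket vanishes, matching $\delta_{i,\star}=0$.

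The main obstacle is rigor when $\varphi_i(\alpha)\geqslant\beta$, for instance at small $\alpha$ where $\varphi_i$ can be positive. In that range $\mathbb{E}e^{-\alpha Y_i(T_\beta)}$ is infinite, so the decomposition above is not valid as written. Our first approach is to establish the identity for $\alpha$ large, where $\varphi_i(\alpha)>\beta$ fails only if $r_i<0$ makes $\varphi_i\to+\infty$.

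That restriction is the wrong direction, so instead we use the Kella--Whitt/Wald martingale $e^{-\alpha Y_i(t)-(\varphi_i(\alpha)-\beta)t}$, or equivalently the resolvent identity. We apply optional stopping at $\min\{\sigma,\tau,T_\beta\}$, which involves only bounded quantities because $Y_i\in[-x,v-x]$ before exit. This gives the same linear relation for every $\alpha\geqslant0$, with $\varphi_i(\alpha)\neq\beta$. The case $\varphi_i(\alpha)=\beta$ then follows by continuity in $\alpha$, since both sides are analytic and the left side is bounded.
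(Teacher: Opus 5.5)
Your core computation is correct. Split $\mathbb{E}[e^{-\alpha Y_i(T_\beta)}]$ over the three exit scenarios. Use $Y_i(\sigma)=-x$ (downward creeping), $Y_i(\tau)=v-x$ plus overshoot, and the strong Markov property with memorylessness of $T_\beta$. Then Lemma~\ref{lemma: eta_tilde} with $u_-=x$, $u_+=v-x$ reproduces \eqref{eq: delta_star decomp} exactly.

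The paper gives no proof of its own here; it imports the identity from \cite[Lemma~9]{MRS}, so yours is a self-contained derivation. The stated bracket uses $\eta_i(v-x,\alpha,\beta)$ and $\delta_{i,-}\eta_i(v,\alpha,\beta)$ rather than $\tilde\eta_i$. That form corresponds to an equivalent route: first kill only at $\tau$, which gives $\frac{\beta}{\beta-\varphi_i(\alpha)}\bigl(1-e^{-\alpha(v-x)}\eta_i(v-x,\alpha,\beta)\bigr)$, then remove the paths that reach $-x$ first and restart there. Your appeal to Lemma~\ref{lemma: eta_tilde} does the same bookkeeping.

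You are also right that the range of $\alpha$ needs attention. The paper glosses over this, e.g.\ when it writes $\mathbb{E}e^{-\alpha Y_2(T_\beta)}=\beta/(\beta-\varphi_2(\alpha))$ in the proof of Proposition~\ref{prop: chi(x)}.

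However, your last paragraph gets this issue backwards and then reaches for the wrong tool. Since $\varphi_i$ is convex with $\varphi_i(0)=0<\beta$, one has $\varphi_i(\alpha)<\beta$ exactly for $\alpha\in[0,\psi_i(\beta))$. So small $\alpha$ is the safe range, even when $\varphi_i$ is positive there. Your decomposition breaks down only for $\alpha\geqslant\psi_i(\beta)$, where every term other than $e^{\alpha x}\delta_{i,\star}$ is infinite.

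This is exactly what makes your first-paragraph remark about analytic continuation work. $\delta_{i,\star}$ is entire in $\alpha$, because $Y_i(T_\beta)\in[-x,v-x]$ on the event in question. Meanwhile $\eta_i(\cdot,\alpha,\beta)$ and $\varphi_i$ are analytic on $\{\Re\alpha>0\}$. Hence $(\beta-\varphi_i(\alpha))\delta_{i,\star}/\beta$ and the bracket agree on $(0,\psi_i(\beta))$, and therefore on the whole half-plane. At $\alpha=\psi_i(\beta)$ the bracket vanishes and the formula must be read as a limit.

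The martingale you propose instead does not deliver the identity. $e^{-\alpha Y_i(t)-(\varphi_i(\alpha)-\beta)t}$ is a martingale only after multiplying by $\mathbf{1}_{\{t<T_\beta\}}$. Optional stopping of it, or of the unkilled Wald martingale $e^{-\alpha Y_i(t)-\varphi_i(\alpha)t}$, at $\min\{\sigma,\tau,T_\beta\}$ either kills the $T_\beta$-term or leaves it with an extra factor $e^{-\varphi_i(\alpha)T_\beta}$. So $\mathbb{E}[e^{-\alpha Y_i(T_\beta)}\mathbf{1}_{\{T_\beta<\rho\}}]$, with $\rho:=\min\{\sigma,\tau\}$, is never isolated. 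What you want is the Kella--Whitt (Dynkin) martingale
\[
1-e^{-\beta t-\alpha Y_i(t)}+(\varphi_i(\alpha)-\beta)\int_0^t e^{-\beta s-\alpha Y_i(s)}\,\mathrm{d}s
\]
stopped at $\rho$. Since $\beta\,\mathbb{E}\int_0^\rho e^{-\beta s}g(Y_i(s))\,\mathrm{d}s=\mathbb{E}[g(Y_i(T_\beta))\mathbf{1}_{\{T_\beta<\rho\}}]$, it gives $e^{\alpha x}\delta_{i,-}+e^{-\alpha(v-x)}\tilde\eta_i-1=\frac{\varphi_i(\alpha)-\beta}{\beta}e^{\alpha x}\delta_{i,\star}$ for all $\alpha\geqslant0$ at once. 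All quantities are bounded, and no continuation argument is needed.
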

	
	\begin{corollary}
		For $x\in[0,v]$, $i=1,2$, $\alpha\geqslant0$, and $\beta>0$,
		\begin{equation}\label{eq: delta_star scale func}
			\delta_{i,\star}(x,v-x)
			=
			\beta
			\frac{W_i^{(\beta)}(v-x)}
			{W_i^{(\beta)}(v)}
			\int_0^v e^{-\alpha y}W_i^{(\beta)}(y)\,\mathrm{d}y
			-
			\beta e^{-\alpha x}
			\int_0^{v-x}e^{-\alpha y}W_i^{(\beta)}(y)\,\mathrm{d}y.
		\end{equation}
	\end{corollary}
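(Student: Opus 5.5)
The plan is to substitute the scale-function representations of Section~\ref{sec: prelim} into the right-hand side of \eqref{eq: delta_star decomp} and simplify. Here $\delta_{i,-}(x)$ stands for $\delta_{i,-}(x,v-x,\beta)$, which by Lemma~\ref{lemma: deltas} equals $W_i^{(\beta)}(v-x)/W_i^{(\beta)}(v)$. For the two overshoot transforms we use Lemma~\ref{lemma: eta}. Throughout, write $W=W_i^{(\beta)}$, $\varphi=\varphi_i$ and $\psi=\psi_i$.

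\emph{Step 1: insert Lemma~\ref{lemma: eta}.} Since $e^{-\alpha v}e^{\alpha(v-x)}=e^{-\alpha x}$, we get
\[
e^{-\alpha v}\eta_i(v-x,\alpha,\beta)=(\varphi(\alpha)-\beta)\,e^{-\alpha x}\!\int_{v-x}^\infty e^{-\alpha y}W(y)\,\mathrm{d}y+\frac{\varphi(\alpha)-\beta}{\psi(\beta)-\alpha}\,e^{-\alpha v}W(v-x).
\]
Similarly,
\[
e^{-\alpha v}\delta_{i,-}(x)\,\eta_i(v,\alpha,\beta)=\frac{W(v-x)}{W(v)}(\varphi(\alpha)-\beta)\int_v^\infty e^{-\alpha y}W(y)\,\mathrm{d}y+\frac{\varphi(\alpha)-\beta}{\psi(\beta)-\alpha}\,e^{-\alpha v}W(v-x).
\]
In the bracket of \eqref{eq: delta_star decomp} these two terms enter with opposite signs. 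Their terms involving $\psi(\beta)-\alpha$ are identical, so they cancel. This cancellation is the key structural simplification.

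\emph{Step 2: convert tail integrals to finite ones.} Assume first that $\varphi(\alpha)>\beta$. Then the defining transform of $W$ gives
\[
\int_a^\infty e^{-\alpha y}W(y)\,\mathrm{d}y=\frac{1}{\varphi(\alpha)-\beta}-\int_0^a e^{-\alpha y}W(y)\,\mathrm{d}y.
\]
Apply this with $a=v-x$ and with $a=v$. The constant pieces then produce $-e^{-\alpha x}$ and $+W(v-x)/W(v)$, which cancel the first two terms of the bracket exactly. The bracket therefore reduces to
\[
(\varphi(\alpha)-\beta)\Bigl[e^{-\alpha x}\int_0^{v-x}e^{-\alpha y}W(y)\,\mathrm{d}y-\frac{W(v-x)}{W(v)}\int_0^v e^{-\alpha y}W(y)\,\mathrm{d}y\Bigr].
\]
Multiplying by $\beta/(\beta-\varphi(\alpha))$ gives exactly \eqref{eq: delta_star scale func}.

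\emph{Step 3: remove the restriction $\varphi(\alpha)>\beta$.} This is the step I expect to need the most care. Step 2 only establishes the identity for $\alpha>\psi(\beta)$, whereas the corollary is claimed for all $\alpha\geqslant0$.

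The first thing I would try is analytic continuation in $\alpha$. The right-hand side of \eqref{eq: delta_star scale func} contains only integrals over bounded intervals, so it is entire in $\alpha$. The left-hand side, by its definition \eqref{eq: delta_star def}, is the transform of a sub-probability measure supported on a bounded set, since $Y_i(T_\beta)\in[-x,v-x]$ on the relevant event. Hence it is also entire in $\alpha$.

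Two analytic functions that agree on $(\psi(\beta),\infty)$ agree everywhere, so the identity holds for all $\alpha\geqslant0$. This includes the points $\alpha=\psi(\beta)$ and $\varphi(\alpha)=\beta$, where the intermediate expressions from Lemma~\ref{lemma: eta} have only removable singularities.
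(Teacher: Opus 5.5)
Your proposal is correct and follows the paper's argument: the paper's proof simply substitutes \eqref{eq: delta_min} and \eqref{eq: eta} into \eqref{eq: delta_star decomp}, and your Steps 1--2 carry out exactly this algebra. Your Step 3 makes explicit a point the paper leaves implicit, namely that the tail integral in \eqref{eq: eta} converges only for $\alpha>\psi_i(\beta)$; your analytic-continuation argument correctly extends the identity to all $\alpha\geqslant0$, since both sides are entire in $\alpha$ (the left-hand side because $Y_i(T_\beta)\in[-x,v-x]$ on the relevant event).
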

	
	\begin{proof}
		This follows by substituting the expressions for
		$\delta_{i,-}$ and $\eta_i$ from \eqref{eq: delta_min} and
		\eqref{eq: eta} into \eqref{eq: delta_star decomp}.
	\end{proof}
	
	We can now characterize the LST of the workload at the exponential epoch.
	The following proposition expresses $\chi(x)$ in terms of the hitting
	probabilities, overshoot transforms, and the auxiliary quantity
	$\delta_{1,\star}$ defined in \eqref{eq: delta_star def}.
	
	\begin{proposition}\label{prop: chi(x)}
		Let $\beta>0$. For $x\in(v,\infty)$
		\begin{equation}\label{eq: chi(x) for x > v}
			\chi(x)
			=
			\frac{\beta}{\beta-\varphi_2(\alpha)}
			\left(
			e^{-\alpha x}
			-e^{-\alpha v}e^{-\psi_2(\beta)(x-v)}
			\right)
			+
			e^{-\psi_2(\beta)(x-v)}\chi(v),
		\end{equation}
		and for $x\in[0,v]$
		\begin{align}
			\chi(x)
			&=
			\delta_{1,-}(x,v-x)\chi(0)
			+\tilde{\eta}_1(x,v-x,\psi_2(\beta),\beta)\chi(v)
			\notag\\
			&\quad
			+\frac{\beta}{\beta-\varphi_2(\alpha)}
			e^{-\alpha v}
			\left(
			\tilde{\eta}_1(x,v-x,\alpha,\beta)
			-
			\tilde{\eta}_1(x,v-x,\psi_2(\beta),\beta)
			\right)
			+\delta_{1,\star}(x,v-x).
			\label{eq: chi(x) for x < v}
		\end{align}
	\end{proposition}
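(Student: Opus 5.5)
The plan is to condition on the first exit of the workload from the current regime and use the strong Markov property together with the memorylessness of $T_\beta$.

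\emph{Case $x>v$.} Starting from $x>v$, the workload evolves as $x+Y_2$ until the first time it drops below $v$. Since $Y_2$ has no negative jumps, this happens at $\sigma_2(x-v)$, and at that moment the workload equals exactly $v$. I would split according to whether $T_\beta$ falls before or after $\sigma_2(x-v)$:
\[
\chi(x)=\mathbb{E}\bigl[e^{-\alpha(x+Y_2(T_\beta))}\mathbf 1_{\{T_\beta<\sigma_2(x-v)\}}\bigr]+\mathbb{P}\bigl(\sigma_2(x-v)\leqslant T_\beta\bigr)\chi(v).
\]
By \eqref{eq: sigma before killing}, the second probability equals $e^{-\psi_2(\beta)(x-v)}$. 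For the first term, I write it as the unrestricted expectation minus the part on $\{\sigma_2\leqslant T_\beta\}$. The unrestricted expectation is
\[
e^{-\alpha x}\,\mathbb{E}e^{-\alpha Y_2(T_\beta)}=e^{-\alpha x}\frac{\beta}{\beta-\varphi_2(\alpha)}.
\]
On $\{\sigma_2\leqslant T_\beta\}$, restart at level $v$ and use memorylessness; this gives $e^{-\psi_2(\beta)(x-v)}e^{-\alpha v}\beta/(\beta-\varphi_2(\alpha))$. Together these yield \eqref{eq: chi(x) for x > v}. This step requires $\beta>\varphi_2(\alpha)$, or else an analytic-continuation remark.

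\emph{Case $x\in[0,v]$.} Starting from $x\in[0,v]$, the workload evolves as $x+Y_1$ (the free process) until the first of three events: killing, hitting $0$, or crossing $v$. These are governed by $\sigma_1(x)$ and $\tau_1(v-x)$, and I would decompose into the same three events.
\begin{itemize}
\item Killing occurs first. This contributes exactly $\delta_{1,\star}(x,v-x)$ by definition \eqref{eq: delta_star def}.
\item The process hits $0$ first, before killing. The probability is $\delta_{1,-}(x,v-x)$, and by the strong Markov property and memorylessness the process restarts from $0$, contributing $\chi(0)$. A subtle point is that the workload is reflected at $0$, but the first passage to $0$ coincides with that of the free process, so the restart is valid.
\item The process crosses $v$ by a jump first, with overshoot $O$. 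It then sits at $v+O$, and by \eqref{eq: chi(x) for x > v} the contribution is
\[
\frac{\beta}{\beta-\varphi_2(\alpha)}\bigl(e^{-\alpha(v+O)}-e^{-\alpha v}e^{-\psi_2(\beta)O}\bigr)+e^{-\psi_2(\beta)O}\chi(v).
\]
This expression is taken on $\{\tau_1(v-x)\leqslant\min\{\sigma_1(x),T_\beta\}\}$. Taking expectations and using definition \eqref{eq:eta-tilde-def} gives $e^{-\alpha v}\tilde\eta_1(x,v-x,\alpha,\beta)$, $\tilde\eta_1(x,v-x,\psi_2(\beta),\beta)$, and hence \eqref{eq: chi(x) for x < v}.
\end{itemize}

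The main obstacle is bookkeeping rather than analysis. First, the two functions must be consistent at $v$: the $x>v$ formula refers to $\chi(v)$, which is itself defined through the lower regime. Because the process crossing $v$ upward always does so by a jump with strictly positive overshoot, and downward exits hit $v$ exactly, the recursion is well defined. Second, at $x=v$ itself $\tau_1(0)$ may be immediate when $r_1<0$, so a brief check is needed that the boundary convention (the workload at exactly $v$ is driven by $Y_1$) matches $\tilde\eta_1(v,0,\cdot)$.
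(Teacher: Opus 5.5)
Your proposal is correct and is essentially the paper's proof: for $x>v$ you condition on killing versus first passage to $v$ (using $\mathbb{P}(\sigma_2(x-v)\leqslant T_\beta)=e^{-\psi_2(\beta)(x-v)}$ and $\mathbb{E}e^{-\alpha Y_2(T_\beta)}=\beta/(\beta-\varphi_2(\alpha))$), and for $x\in[0,v]$ you split into hitting $0$, crossing $v$ by a jump, or killing, then substitute the first formula into the overshoot term exactly as the paper does; your explicit subtraction of the post-$\sigma_2$ part, with the caveat that it needs $\varphi_2(\alpha)<\beta$ or analytic continuation, makes precise a step the paper leaves implicit. One small correction to your closing remark: since $r_1<0$, the free process drifts downward from $v$, so $\tau_1(0)$ is a.s.\ strictly positive (never immediate), and the boundary case $x=v$ needs no special treatment.
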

	
	\begin{proof}
		For the first claim, condition on the first event, which is either killing or
		hitting the threshold $v$. In terms of the free process $Y_2$,
		\begin{align*}
			\chi(x)
			&=
			e^{-\alpha x}
			\mathbb{E}\left[
			e^{-\alpha Y_2(T_\beta)}
			\mathbf{1}_{\{T_\beta<\sigma_2(x-v)\}}
			\right]
			+
			\mathbb{P}\left(\sigma_2(x-v)\leqslant T_\beta\right)\chi(v).
		\end{align*}
		By \eqref{eq: sigma before killing} and the strong Markov
		property we have that
		$
		\mathbb{P}\left(\sigma_2(x-v)\leqslant T_\beta\right)
		=e^{-\psi_2(\beta)(x-v)},
		$
		while
		\[
		\mathbb{E}\left(e^{-\alpha Y_2(T_\beta)}\right)
		=
		\frac{\beta}{\beta-\varphi_2(\alpha)}.
		\]
		Hence,
		\begin{align*}
			\chi(x)
			&=
			e^{-\alpha x}
			\frac{\beta}{\beta-\varphi_2(\alpha)}
			-
			e^{-\alpha v}
			\frac{\beta}{\beta-\varphi_2(\alpha)}
			e^{-\psi_2(\beta)(x-v)}
			+
			e^{-\psi_2(\beta)(x-v)}\chi(v),
		\end{align*}
		which gives \eqref{eq: chi(x) for x > v}.
		
		For the second claim, conditioning on the first event for $Y_1$ (which is either reaching $0$, exceeding $v$, or being killed)
		\begin{equation*}
			\chi(x)
			=
			\delta_{1,-}(x,v-x)\chi(0)
			+
			\int_0^\infty
			\tilde h_1(x,v-x,y,\beta)\chi(v+y)\,\mathrm{d}y
			+
			\delta_{1,\star}(x,v-x).
		\end{equation*}
		Substituting \eqref{eq: chi(x) for x > v} into the integral gives
		\begin{align*}
			\int_0^\infty
			\tilde h_1(x,v-x,y,\beta)\chi(v+y)\,\mathrm{d}y
			&=
			\frac{\beta}{\beta-\varphi_2(\alpha)}
			e^{-\alpha v}
			\left(
			\tilde{\eta}_1(x,v-x,\alpha,\beta)
			-
			\tilde{\eta}_1(x,v-x,\psi_2(\beta),\beta)
			\right)
			\\
			&\quad
			+
			\tilde{\eta}_1(x,v-x,\psi_2(\beta),\beta)\chi(v),
		\end{align*}
		and \eqref{eq: chi(x) for x < v} follows.
	\end{proof}

	\subsection{Derivation of \texorpdfstring{$\chi(0)$}{chi 0} and \texorpdfstring{$\chi(v)$}{chi v}}\label{sec: chi 0 and chi K}
	
	We now characterize $\chi(0)$ and $\chi(v)$, which, together with Proposition \ref{prop: chi(x)}, determine $\chi(x)$ for all $x\geqslant0$. We obtain two linear equations by conditioning on the first event and applying the strong Markov property. This yields a linear system in $\chi(0)$ and $\chi(v)$, from which both quantities can be determined.
	
	Starting from $0$, the first event is either killing, an arrival from $Y_1$ with job size at most $v$, or an arrival with job size exceeding $v$. Hence,
	\begin{align*}
		\chi(0)
		&= \frac{\beta}{\lambda_1+\beta}
		+ \frac{\lambda_1}{\lambda_1+\beta}
		\left(
		\int_{[0,v]} \mathbb{P}(B_1\in\mathrm{d}x)\chi(x)
		+ \int_{(v,\infty)} \mathbb{P}(B_1\in\mathrm{d}x)\chi(x)
		\right).
	\end{align*}
	Substituting \eqref{eq: chi(x) for x < v} and \eqref{eq: chi(x) for x > v} into the respective integrals gives
	\begin{align*}
		\chi(0)
		&= P_{11}(v,\beta)\chi(0)
		+ P_{12}(v,\beta)\chi(v)
		+ b_1(v,\alpha,\beta),
	\end{align*}
	where
	\begin{align}
		P_{11}(v,\beta)
		&= \frac{\lambda_1}{\lambda_1+\beta}
		\int_{[0,v]}
		\delta_{1,-}(x,v-x)
		\mathbb{P}(B_1\in\mathrm{d}x),
		\label{eq: def P_{11}}\\
		P_{12}(v,\beta)
		&= \frac{\lambda_1}{\lambda_1+\beta}
		\left(
		\int_{[0,v]}
		\tilde{\eta}_1(x,v-x,\psi_2(\beta),\beta)
		\mathbb{P}(B_1\in\mathrm{d}x)
		+ \int_{(v,\infty)}
		e^{-\psi_2(\beta)(x-v)}
		\mathbb{P}(B_1\in\mathrm{d}x)
		\right),
		\label{eq: def P_{12}}\\
		\notag
		b_1(v,\alpha,\beta)
		&= \frac{\beta}{\lambda_1+\beta}
		+ \frac{\lambda_1}{\lambda_1+\beta}
		\Bigg(
		\frac{\beta}{\beta-\varphi_2(\alpha)}
		\int_{(v,\infty)}
		\left(
		e^{-\alpha x}
		- e^{-\alpha v}e^{-\psi_2(\beta)(x-v)}
		\right)
		\mathbb{P}(B_1\in\mathrm{d}x) \\
		\notag
		&\qquad\qquad
		+ \frac{\beta}{\beta-\varphi_2(\alpha)}
		e^{-\alpha v}
		\int_{[0,v]}
		\left(
		\tilde{\eta}_1(x,v-x,\alpha,\beta)
		- \tilde{\eta}_1(x,v-x,\psi_2(\beta),\beta)
		\right)
		\mathbb{P}(B_1\in\mathrm{d}x) \\
		&\qquad\qquad
		+ \int_{[0,v]}
		\delta_{1,\star}(x,v-x)
		\mathbb{P}(B_1\in\mathrm{d}x)
		\Bigg).
		\label{eq: def b_1}
	\end{align}
	
	Next, evaluating \eqref{eq: chi(x) for x < v} at $x=v$ gives
	\begin{align*}
		\chi(v)
		&= P_{21}(v,\beta)\chi(0)
		+ P_{22}(v,\beta)\chi(v)
		+ b_2(v,\alpha,\beta),
	\end{align*}
	where
	\begin{align}
		P_{21}(v,\beta)
		&= \delta_{1,-}(v,0),\quad\quad
		P_{22}(v,\beta)
		= \tilde{\eta}_1(v,0,\psi_2(\beta),\beta),\label{eq: def P_{2}}\\
		b_2(v,\alpha,\beta)
		&= \frac{\beta}{\beta-\varphi_2(\alpha)}
		e^{-\alpha v}
		\left(
		\tilde{\eta}_1(v,0,\alpha,\beta)
		- \tilde{\eta}_1(v,0,\psi_2(\beta),\beta)
		\right)
		+ \delta_{1,\star}(v,0). \label{eq: def b_2}
	\end{align}
	
	Combining the two equations gives
	\begin{equation*}
		\begin{pmatrix}
			\chi(0)\\
			\chi(v)
		\end{pmatrix}
		=
		\bs{P}
		\begin{pmatrix}
			\chi(0)\\
			\chi(v)
		\end{pmatrix}
		+ \bs{b},
	\end{equation*}
	where
	\begin{equation*}
		\bs{P}
		=
		\begin{pmatrix}
			P_{11}(v,\beta) & P_{12}(v,\beta)\\
			P_{21}(v,\beta) & P_{22}(v,\beta)
		\end{pmatrix},
		\qquad
		\bs{b}
		=
		\begin{pmatrix}
			b_1(v,\alpha,\beta)\\
			b_2(v,\alpha,\beta)
		\end{pmatrix}.
	\end{equation*}
	
	The row-sums of $\bs{P}$ are strictly less than one, so that $\bs{I}-\bs{P}$ is invertible. Consequently,
	\begin{equation}\label{eq: matrix equation chi(0) and chi(v)}
		\begin{pmatrix}
			\chi(0)\\
			\chi(v)
		\end{pmatrix}
		=
		\left(\bs{I}-\bs{P}\right)^{-1}\bs{b}.
	\end{equation}
	
	\begin{theorem}\label{theorem: transient LST chi(x)}
		For $x\geqslant0$ and $\alpha,\beta>0$, the LST of the workload at an exponential time with parameter $\beta$ is given by Proposition \ref{prop: chi(x)}, where $\chi(0)$ and $\chi(v)$ are determined by \eqref{eq: matrix equation chi(0) and chi(v)}.
	\end{theorem}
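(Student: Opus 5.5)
The plan is to assemble the theorem from Proposition~\ref{prop: chi(x)} and the linear system derived in Section~\ref{sec: chi 0 and chi K}. Nothing new has to be computed. The only genuinely new ingredient is a rigorous justification that $\bs I-\bs P$ is invertible, together with the observation that $\chi(0)$ and $\chi(v)$ are well-defined finite numbers satisfying the system.

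\emph{Step 1.} Fix $\alpha,\beta>0$. Since $V_v\geqslant0$, $\chi(x)\in(0,1]$ for all $x$, so every quantity appearing is finite. For $x>v$, \eqref{eq: chi(x) for x > v} expresses $\chi(x)$ through $\chi(v)$. For $x\in[0,v]$, \eqref{eq: chi(x) for x < v} expresses $\chi(x)$ through $\chi(0)$ and $\chi(v)$. Both follow from the strong Markov property at $\sigma_2$, respectively at $\min\{\sigma_1,\tau_1\}$, as shown in the proof of Proposition~\ref{prop: chi(x)}. Hence it suffices to identify the two numbers $\chi(0)$ and $\chi(v)$.

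\emph{Step 2.} Condition on the first event starting from $0$: killing at rate $\beta$, or an arrival at rate $\lambda_1$. The workload cannot change before the first arrival, since it sits at $0$ under reflection. The post-arrival level is $B_1$, and substituting Proposition~\ref{prop: chi(x)} gives the first row $\chi(0)=P_{11}\chi(0)+P_{12}\chi(v)+b_1$. Evaluating \eqref{eq: chi(x) for x < v} at $x=v$ gives the second row. Thus $(\chi(0),\chi(v))^\top$ solves $(\bs I-\bs P)\bs\chi=\bs b$.

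\emph{Step 3 (main point).} Show that $\bs P$ has nonnegative entries and row sums strictly below one, so that $\|\bs P\|_\infty<1$. Then the Neumann series converges and $\bs I-\bs P$ is invertible, which yields \eqref{eq: matrix equation chi(0) and chi(v)}.

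Nonnegativity is clear, since every entry is an expectation of a nonnegative quantity; recall $\psi_2(\beta)>0$.

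For row one, the entries have a probabilistic meaning. The term $P_{11}+P_{12}$ is at most $\frac{\lambda_1}{\lambda_1+\beta}$ times the probability, after the first jump, of reaching $0$ or crossing into $[v,\infty)$ and subsequently returning to $v$, all before $T_\beta$. This is bounded using $\delta_{1,-}+\delta_{1,+}\leqslant1$ and $\tilde\eta_1(\cdot,\cdot,\psi_2(\beta),\beta)\leqslant\delta_{1,+}$. Hence the row sum is at most $\lambda_1/(\lambda_1+\beta)<1$.

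For row two, $P_{21}+P_{22}\leqslant\delta_{1,-}(v,0)+\delta_{1,+}(v,0)=\mathbb P(\min\{\sigma_1(v),\tau_1(0)\}\leqslant T_\beta)$. I expect this to be the delicate step. From level $v$ with $u_+=0$, the process $Y_1$ has negative drift, so $\tau_1(0)>0$ almost surely: the upward jump only comes after an exponential waiting time. Meanwhile $\sigma_1(v)\geqslant v/|r_1|>0$. So $\min\{\sigma_1(v),\tau_1(0)\}>0$ almost surely and is independent of $T_\beta$, and the probability that it precedes $T_\beta$ is strictly less than one.

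Moreover $\tilde\eta_1(v,0,\psi_2(\beta),\beta)<\delta_{1,+}(v,0)$ strictly, because the overshoot is positive. I would use this strict inequality as a backup if the first argument needs sharpening.

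With invertibility established, uniqueness of the solution identifies $(\chi(0),\chi(v))$, and the theorem follows.
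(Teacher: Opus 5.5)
Your proposal is correct and follows the paper's route. You combine Proposition~\ref{prop: chi(x)} with the two first-step equations for $\chi(0)$ and $\chi(v)$, then invert $\bs I-\bs P$ because $\bs P$ is nonnegative with row sums strictly below one; the paper only asserts that row-sum property, and your explicit bounds (at most $\lambda_1/(\lambda_1+\beta)$ for the first row, and $\mathbb E\, e^{-\beta\min\{\sigma_1(v),\tau_1(0)\}}<1$ for the second) supply the missing justification.
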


	\begin{remark}\label{remark: duality}
		{\em
			In general, the well-known duality between ruin probabilities and workload distributions does not directly apply to the model considered here. An exception occurs when $Y_1$ and $Y_2$ differ only in their drift parameters, i.e., when $\lambda_1=\lambda_2$, $B_1$ and $B_2$ are identically distributed, and $r_1\neq r_2$. In this case, we can express $\chi(0)$ in terms of a ruin probability.
			
			To see this, for $z\geqslant0$ define $Y_{z-v}^*$ as the cumulative claim process of Section \ref{sec: Cramer-Lundberg model} with threshold $v^*=z-v$, driven by $Y_2$ below $v^*$ and by $Y_1$ above $v^*$. Let $R^*(t)=z-Y_{z-v}^*(t)$. By the duality argument of \cite[\S III.2, Theorem 2.1]{AA}, the risk process obtained from the workload process has the same distribution as $R^*$. Consequently,
			\begin{equation}\label{eq: duality}
				{\mathbb{P}\left(V_v(T_\beta)>z\,\middle|\,V_v(0)=0\right)}
				=
				\mathbb{P}\left(\bar{Y}_{z-v}^*(T_\beta)>z\right).
			\end{equation}
			Taking the Laplace transform with respect to $z$ therefore yields
			\begin{equation*}
				\chi(0)
				=
				1-\alpha\int_0^\infty
				e^{-\alpha z}
				\mathbb{P}\left(\bar{Y}_{z-v}^*(T_\beta)>z\right)
				\,\mathrm{d}z,
			\end{equation*}
			where the ruin probability on the right-hand side is given by the results of Section \ref{sec: Cramer-Lundberg model}.
			\hfill$\Diamond$}
	\end{remark}

	\subsection{Stationary distribution}
	
	In Theorem \ref{theorem: transient LST chi(x)} we derived the transient LST of the workload at an exponentially distributed time $T_\beta$. We now consider the {\it stationary} workload, denoted by
	\[
	V_v \equiv V_v(\infty),
	\]
	and derive its LST. In \cite[Subsection 4.2]{BBR} this derivation was done by means of regeneration epochs initialized at the threshold level. Applying a different approach, in this subsection and Appendix \ref{app: proof equivalence LSTs} we show that our result is equal to the LST derived in \cite{BBR}.
	
	Since stationarity corresponds to $\beta=0$, we have
	\begin{equation}
		\mathbb{E}\,e^{-\alpha V_v}
		=
		\lim_{\beta\downarrow0}\chi(x,\alpha,\beta),
	\end{equation}
	for any $x\geqslant0$, since the stationary distribution is independent of the initial workload. We obtain the stationary LST by taking the limit $\beta\downarrow0$ in $\chi(0,\alpha,\beta)$.
	To ensure that a stationary distribution exists, throughout this section we impose the following stability condition on $Y_2$; no conditions are imposed on the process $Y_1$.
	
	\begin{assumption}\label{assump: stability}
		We assume that $-\mathbb{E}\,Y_2(1)=\varphi_2'(0)>0.$
	\end{assumption}
	
	Assumption \ref{assump: stability} implies the following standard properties; see e.g.\ \cite{KYP}.
	
	\begin{lemma}\label{lemma: relations psi_2}
		Under Assumption \ref{assump: stability} we have (i)~$\psi_2(0) = 0$,
		and (ii)~$\varphi_2'(0)\psi_2'(0) = 1$.
	\end{lemma}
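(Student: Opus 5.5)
Both claims follow from the definition of $\psi_2$ as the right-inverse of $\varphi_2$ and from convexity of $\varphi_2$. Recall from \eqref{eq: def laplace exp} that
\[
\varphi_2(\alpha)=-r_2\alpha-\lambda_2\bigl(1-\mathscr B_2(\alpha)\bigr).
\]
This function is finite, and in fact analytic, on $[0,\infty)$. It satisfies $\varphi_2(0)=0$. Its second derivative is $\lambda_2\mathscr B_2''(\alpha)=\lambda_2\mathbb E[B_2^2e^{-\alpha B_2}]>0$, so $\varphi_2$ is strictly convex on $[0,\infty)$. Moreover, $\varphi_2(\alpha)\to\infty$ as $\alpha\to\infty$, because $-r_2\alpha\to\infty$ while the remaining term stays bounded. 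For $\beta\geqslant0$, the right-inverse is $\psi_2(\beta)=\sup\{\alpha\geqslant0:\varphi_2(\alpha)=\beta\}$, which is the largest nonnegative root of $\varphi_2(\alpha)=\beta$.

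For (i), Assumption~\ref{assump: stability} gives $\varphi_2'(0)>0$. By convexity, $\varphi_2'(\alpha)\geqslant\varphi_2'(0)>0$ for all $\alpha\geqslant0$, so $\varphi_2$ is strictly increasing on $[0,\infty)$. Hence $\alpha=0$ is the only nonnegative root of $\varphi_2(\alpha)=0$, and $\psi_2(0)=0$. If one prefers to argue from the probabilistic side instead, \eqref{eq: sigma before killing} with $\beta=0$ gives $\mathbb P(\sigma_2(u_-)<\infty)=e^{-\psi_2(0)u_-}$. A negative mean drift makes $Y_2(t)\to-\infty$ almost surely, so this probability equals $1$, again yielding $\psi_2(0)=0$.

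For (ii), since $\varphi_2$ is a strictly increasing continuous bijection from $[0,\infty)$ onto $[0,\infty)$, the right-inverse $\psi_2$ is its genuine inverse there: $\varphi_2(\psi_2(\beta))=\beta$ for $\beta\geqslant0$. The derivative $\varphi_2'$ is continuous and strictly positive at $\psi_2(0)=0$. The inverse function theorem, in its one-sided version at the endpoint, or equivalently differentiating the identity from the right at $\beta=0$, then shows that $\psi_2$ is right-differentiable at $0$ with
\[
\psi_2'(0)=\frac{1}{\varphi_2'(\psi_2(0))}=\frac{1}{\varphi_2'(0)}.
\]
This is exactly the claim $\varphi_2'(0)\psi_2'(0)=1$.

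The only delicate point is justifying differentiability of $\psi_2$ at the boundary point $\beta=0$, where the derivative can only be one-sided. I would handle it directly. Since $\varphi_2$ is $C^1$ near $0$ with $\varphi_2'(0)>0$, the mean value theorem gives
\[
\beta=\varphi_2(\psi_2(\beta))-\varphi_2(0)=\varphi_2'(\xi_\beta)\,\psi_2(\beta)
\]
for some $\xi_\beta\in(0,\psi_2(\beta))$. Continuity of $\psi_2$ gives $\psi_2(\beta)\to0$ as $\beta\downarrow0$, hence $\xi_\beta\to0$ and $\psi_2(\beta)/\beta\to1/\varphi_2'(0)$, which is the required right derivative.
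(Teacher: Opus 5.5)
Your proof is correct. The paper gives no argument of its own here; it only states these as standard consequences of the stability assumption with a reference to Kyprianou. Your argument (monotonicity of $\varphi_2$ on $[0,\infty)$ from $\varphi_2'(\alpha)\geqslant\varphi_2'(0)>0$, then the mean value theorem for the right derivative of the inverse) is the standard derivation behind that citation.

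One small imprecision: $\varphi_2$ is not analytic at $0$ for general $B_2$. It is only right-differentiable there, with $\varphi_2'(0)=-r_2-\lambda_2\mathbb{E}B_2$, finite under the assumption. Right-differentiability together with right-continuity of $\varphi_2'$ at $0$ is exactly what your mean-value step uses, so the conclusion is unaffected.
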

	
	From \eqref{eq: matrix equation chi(0) and chi(v)}, we have
	\begin{equation}
		\chi(0,\alpha,\beta)
		=
		\frac{(1-P_{22})b_1+P_{12}b_2}
		{\det(\bs{I}-\bs{P})}.
	\end{equation}
	As $\beta\downarrow0$, both the numerator and denominator converge to zero. Indeed, $b_1$ and $b_2$ are of order $\beta$, while $\lim_{\beta\downarrow0}\bs{P}$ is stochastic and hence
	\[
	\lim_{\beta\downarrow0}\det(\bs{I}-\bs{P})=0.
	\]
	We therefore expand the numerator and denominator around $\beta=0$ and retain only their first-order terms. The resulting calculations are routine and are done in Appendix \ref{app: proof thrm stationary LST}.
	
	\begin{proposition}\label{theorem: stationary LST workload}
		For $\alpha\geqslant0$,
		\begin{equation}\label{eq: stationary LST workload}
			\mathbb{E}e^{-\alpha V_v}
			=
			\frac{f(v,\alpha)}{f(v,0)},
		\end{equation}
		where
		\begin{align*}
			f(v,\alpha)
			&:=
			\frac{1}{\lambda_1}
			+\frac{1-\mathbb{E}(e^{-\alpha B_1})}
			{\varphi_2(\alpha)}
			+\left(
			1-\frac{\varphi_1(\alpha)}
			{\varphi_2(\alpha)}
			\right)
			\int_0^v e^{-\alpha y}W_1^{(0)}(y)
			\left(
			1-
			\int_0^{v-y}e^{-\alpha x}
			\mathbb{P}(B_1\in\mathrm{d}x)
			\right)\mathrm{d}y,
			\\
			f(v,0) &\equiv \lim_{\alpha\downarrow0} f(v,\alpha) 
			=
			\frac{1}{\lambda_1}
			+\psi_2'(0)\,\mathbb{E}(B_1)
			+\left(
			1-\varphi_1'(0)\,\psi_2'(0)
			\right)
			\int_0^v
			W_1^{(0)}(y)
			\mathbb{P}(B_1>v-y)\,\mathrm{d}y.
		\end{align*}
	\end{proposition}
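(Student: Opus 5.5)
We obtain the stationary LST as $\lim_{\beta\downarrow0}\chi(0,\alpha,\beta)$. By \eqref{eq: matrix equation chi(0) and chi(v)} and Cramer's rule,
\[
\chi(0,\alpha,\beta)=\frac{(1-P_{22})b_1+P_{12}b_2}{(1-P_{11})(1-P_{22})-P_{12}P_{21}}.
\]
Both numerator and denominator vanish at $\beta=0$. The plan is to compute the first-order coefficient in $\beta$ of each and take their ratio. Independence of the initial state justifies evaluating at $x=0$. We then normalise by the $\alpha\downarrow0$ limit to obtain $f(v,\alpha)/f(v,0)$, since at $\alpha=0$ the LST equals one.

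\textbf{Zeroth-order limits.} By Lemma~\ref{lemma: relations psi_2}, $\psi_2(\beta)=\beta\psi_2'(0)+o(\beta)$.

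\begin{itemize}
\item $P_{21}=\delta_{1,-}(v,0)=W_1^{(\beta)}(0)/W_1^{(\beta)}(v)$. Since $Y_1$ is a CPP with drift, $W_1(0)=1/|r_1|$.
\item $P_{22}=\tilde\eta_1(v,0,\psi_2(\beta),\beta)$ tends to $\delta_{1,+}(v,0,0)$. This equals $1-P_{21}$ at $\beta=0$, by Lemma~\ref{lemma: deltas} with $Z^{(0)}=1$.
\item $P_{11}$ and $P_{12}$ tend to complementary quantities: $\frac{\lambda_1}{\lambda_1+\beta}\to1$, and $\delta_{1,-}+\delta_{1,+}=1$ at $\beta=0$.
\end{itemize}

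So $\lim\bs P$ is stochastic.

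\textbf{First-order expansions.} Write $\tilde\eta_1(\cdot,\cdot,\psi_2(\beta),\beta)$ via Lemma~\ref{lemma: eta_tilde} and Lemma~\ref{lemma: eta}, and expand in $\beta$. There are two sources of $\beta$-dependence.

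\begin{itemize}
\item Through the $\alpha$-argument $\psi_2(\beta)$. This gives the overshoot mean times $\psi_2'(0)$.
\item Through the scale functions $W_1^{(\beta)}=W_1+\beta W_1^{*2}+\dots$ and the factor $\lambda_1/(\lambda_1+\beta)=1-\beta/\lambda_1+\dots$. The latter is the source of the $1/\lambda_1$ term in $f$.
\end{itemize}

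For the $b_i$: each carries an explicit factor $\beta/(\beta-\varphi_2(\alpha))$, or is of order $\beta$. For $\delta_{1,\star}$ this follows from the Corollary to Lemma~\ref{lemma: delta_star}, since $\delta_{1,\star}=\beta(\cdots)$. Hence $b_i/\beta$ has an explicit limit. This limit involves $1/\varphi_2(\alpha)$ times overshoot LSTs at $\beta=0$, and the integrals $\int_0^v e^{-\alpha y}W_1(y)\,dy$ and $\int_0^{v-x}e^{-\alpha y}W_1(y)\,dy$ integrated against $\mathbb P(B_1\in dx)$.

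\textbf{Simplification.} To simplify, I would use Lemma~\ref{lemma: eta} at $\beta=0$ in the form
\[
\eta_1(u,\alpha,0)=\varphi_1(\alpha)\int_u^\infty e^{-\alpha(y-u)}W_1(y)\,dy+\frac{\varphi_1(\alpha)}{\psi_1(0)-\alpha}W_1(u).
\]
The aim is to convert all overshoot transforms into integrals of $W_1$ over $[0,v]$. The identity $\int_0^\infty e^{-\alpha y}W_1=1/\varphi_1(\alpha)$ is what makes this possible. It should also produce the combination $(1-\varphi_1(\alpha)/\varphi_2(\alpha))$ multiplying $\int_0^v e^{-\alpha y}W_1(y)\bigl(1-\int_0^{v-y}e^{-\alpha x}\mathbb P(B_1\in dx)\bigr)dy$. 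The double integral arises after swapping the order of integration in $\int_{[0,v]}\int_0^{v-x}\cdots\,\mathbb P(B_1\in dx)$. The $(1-\mathscr B_1(\alpha))/\varphi_2(\alpha)$ term comes from the jumps of $B_1$ and the $e^{-\alpha x}$ contributions in $b_1$. Dividing the numerator and denominator coefficients then gives a ratio. I would verify that its denominator is the $\alpha\downarrow0$ limit of its numerator, using $\varphi_2(\alpha)\sim\alpha/\psi_2'(0)$ and $1-\mathscr B_1(\alpha)\sim\alpha\mathbb E B_1$. This yields $f(v,0)$ as stated.

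\textbf{Main obstacle.} The main obstacle is the bookkeeping in the derivative of the denominator. The $\beta$-derivatives of $P_{12}$ and $P_{22}$ involve $\partial_\alpha\tilde\eta_1$ at $\alpha=0$ times $\psi_2'(0)$, plus $\partial_\beta$ of the scale functions. These must combine with $-\partial_\beta P_{11}$ terms into exactly $f(v,0)$, up to a common factor that also appears in the numerator. I would first try the special case $v=0$ as a sanity check, where $W_1$-integrals vanish and $f=1/\lambda_1+(1-\mathscr B_1(\alpha))/\varphi_2(\alpha)$, which is a standard M/G/1-type formula. I would then organise the general computation so that the common factor $W_1(v)$ (from $\delta_{1,-}$ denominators) is pulled out of both numerator and denominator before expanding.
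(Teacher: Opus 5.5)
Your proposal is correct and follows essentially the same route as the paper: write $\chi(0,\alpha,\beta)$ via Cram\'er's rule, extract the first-order $\beta$-coefficients of $(1-P_{22})b_1+P_{12}b_2$ (via $\lim_{\beta\downarrow0}b_i/\beta$ and the values of $P_{12},P_{22}$ at $\beta=0$) and of $\det(\bs I-\bs P)$, cancel the common positive factor $W_1^{(0)}(0)/W_1^{(0)}(v)$, and take the ratio. The only differences are cosmetic. The paper first writes $\det(\bs I-\bs P)$ in closed form in terms of $W_1^{(\beta)}$ and $\psi_2(\beta)$ and then differentiates at $\beta=0$. If you tried to skip that computation by normalising at $\alpha\downarrow0$ alone, you would need to justify interchanging the limits $\beta\downarrow0$ and $\alpha\downarrow0$, since $\beta/(\beta-\varphi_2(\alpha))$ is not uniformly $O(\beta)$ near $\alpha=0$; the direct verification you also plan is therefore the right way to close the argument.
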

	
	We conclude this subsection by validating the derived stationary workload LST by considering three special cases for which known results are available: (i) $Y_1=Y_2$,
	(ii) $v\to\infty$, and (iii) $v\downarrow0$. The first case provides a
	simple consistency check, while the latter two give non-trivial
	limiting regimes.
	
	\subsubsection*{(i) Equal input processes}
	
	If $Y_1=Y_2$, then
	$\varphi_1(\alpha)=\varphi_2(\alpha)$, and all terms involving their
	difference vanish. Hence \eqref{eq: stationary LST workload} reduces to
	\begin{equation}\label{eq:MG1}
		\mathbb{E}e^{-\alpha V_v}
		=
		\left({\frac{1}{\lambda_1}
			+\frac{1-\mathscr{B}_1(\alpha)}{\varphi_1(\alpha)}}\right)\left/
		\left({\frac{1}{\lambda_1}+\psi_1'(0)\mathbb{E}(B_1)}\right)\right..
	\end{equation}
	Using
	$
	\psi_1'(0)
	=
	({-r_1-\lambda_1\mathbb{E}(B_1)})^{-1},
	$
	this is the standard LST of the stationary workload of an M/G/1 queue
	driven by $Y_1$; see e.g.\ \cite[Theorem 3.1]{DM}.
	
	\subsubsection*{(ii) The limit $v\to\infty$}
	
	We next consider the limit $v\to\infty$. We assume
	$\mathbb{E}(Y_1(1))<0$, so that $\varphi_1'(0)>0$ and
	$\psi_1(0)=0$. The limiting expressions for the two integral terms
	appearing in $f(v,\alpha)$ and $f(v,0)$ are
	\begin{align}
		\lim_{v\to\infty}
		\int_0^v e^{-\alpha x}
		\int_0^{v-x}e^{-\alpha y}W_1^{(0)}(y)\,\mathrm{d}y\,
		\mathbb{P}(B_1\in\mathrm{d}x)
		&=
		\frac{\mathscr{B}_1(\alpha)}{\varphi_1(\alpha)}, \label{eq: limit_integral_I}\\
		\lim_{v\to\infty}
		\int_0^v W_1^{(0)}(y)
		\mathbb{P}(B_1>v-y)\,\mathrm{d}y
		&=
		\frac{\mathbb{E}(B_1)}{\varphi_1'(0)}. \label{eq: limit_integral_II}
	\end{align}
	The first equality is trivial as the domain of integration becomes $\{(x,y) \in \mathbb{R}^2: x,y\geqslant0\}$. The
	second follows from the first and monotone convergence after rewriting the tail probability and
	letting $\alpha\downarrow0$; see Appendix~\ref{app: limit_integrals} for
	the details.
	
	Substituting \eqref{eq: limit_integral_I}--\eqref{eq: limit_integral_II}
	into $f(v,\alpha)$ and $f(v,0)$ gives
	\begin{align}
		\lim_{v\to\infty} f(v,\alpha)
		&=
		\frac{1}{\lambda_1}
		+\frac{1-\mathscr{B}_1(\alpha)}{\varphi_1(\alpha)},\\
		\lim_{v\to\infty} f(v,0)
		&=
		\frac{1}{\lambda_1}
		+\frac{\mathbb{E}(B_1)}{\varphi_1'(0)}
		=
		\frac{1}{\lambda_1}
		+\psi_1'(0)\mathbb{E}(B_1).
	\end{align}
	It now follows immediately that \eqref{eq:MG1} holds. 
	Thus, as $v$ becomes arbitrarily large, the threshold no longer affects
	the stationary workload distribution.
	
	\subsubsection*{(iii) The limit $v\downarrow0$}
	
	Finally, consider the limit $v\downarrow0$. In this regime, the
	threshold disappears and the system only distinguishes between an empty
	and a non-empty state. Since all integral terms in $f(v,\alpha)$ and
	$f(v,0)$ vanish,
	\begin{equation}
		\lim_{v\downarrow0}\mathbb{E}e^{-\alpha V_v}
		=
		\left({\frac{1}{\lambda_1}
			+\frac{1-\mathscr{B}_1(\alpha)}{\varphi_2(\alpha)}}\right)\left/\left(
		{\frac{1}{\lambda_1}
			+\psi_2'(0)\mathbb{E}(B_1)}\right)\right..
		\label{eq: zero_threshold_LST}
	\end{equation}
	In view of 
	$
	\psi_2'(0)
	=
	({-r_2-\lambda_2\mathbb{E}(B_2)})^{-1},$
	we obtain
	\begin{equation}
		\frac{
			\left(\varphi_2(\alpha)
			+\lambda_1(1-\mathscr{B}_1(\alpha))\right)
			(-r_2-\lambda_2\mathbb{E}(B_2))
		}
		{
			\varphi_2(\alpha)
			\left(-r_2-\lambda_2\mathbb{E}(B_2)
			+\lambda_1\mathbb{E}(B_1)\right)
		}.
		\label{eq: zero_threshold_LST_simplified}
	\end{equation}
	
	To compare this with the model studied in \cite{WEL}, set
	$\lambda_1=\lambda_2\equiv\lambda$ and $r_2=-1$. Then
	\eqref{eq: zero_threshold_LST_simplified} becomes
	\begin{equation}
		\frac{
			\left(\alpha-\lambda
			(\mathscr{B}_1(\alpha)-\mathscr{B}_2(\alpha))\right)
			(1-\lambda\mathbb{E}(B_2))
		}
		{
			\left(\alpha-\lambda(1-\mathscr{B}_2(\alpha))\right)
			\left(1-\lambda(\mathbb{E}(B_2)-\mathbb{E}(B_1))\right)
		}.
	\end{equation}
	This coincides with the LST of the stationary waiting time in
	\cite[Theorem 4]{WEL}. Hence, the derived expression also recovers the
	known zero-threshold model as a special case.

	\begin{remark}{\em 
			In the special case
			\[
			\varphi_i(\alpha)
			= g_i\alpha-\lambda_i\frac{\alpha}{\mu_i+\alpha},
			\qquad i=1,2,
			\]
			the model corresponds to an M/M/1 queue with arrival rate
			$\lambda_i$, service rate $\mu_i$, and service speed $r_i=-g_i$ in
			mode $i$. In this case, Proposition~\ref{theorem: stationary LST workload}
			gives
			\[
			f(v,\alpha)
			=
			\frac{1}{\lambda_1}
			+\frac{1}{g_1\alpha+g_1\mu_1-\lambda_1}
			+e^{-(\alpha+\mu_1-\lambda_1/g_1)v}
			\left[
			\frac{\mu_2+\alpha}{\mu_1+\alpha}
			\frac{1}{g_2\alpha+g_2\mu_2-\lambda_2}
			-\frac{1}{g_1\alpha+g_1\mu_1-\lambda_1}
			\right],
			\]
			with $f(v,0)$ given by the same expression at $\alpha=0$.
			Inverting the LST shows that $V_v$ has an atom of size
			$1/(\lambda_1 f(v,0))$ at zero and density, with $\nu_i:=\mu_i-\lambda_i/g_i$ for $i=1,2$,
			\[
			\frac{1}{f(v,0)}
			\left[
			\frac{1}{g_1}e^{-\nu_1 x}
			+e^{-\nu_1 v}
			\left(
			U_1e^{-\mu_1(x-v)}
			+\frac{U_2}{g_2}e^{-\nu_2(x-v)}
			-\frac{1}{g_1}e^{-\nu_1(x-v)}
			\right)\mathbf{1}_{\{x>v\}}
			\right],
			\]
			where
			\[
			U_1:=\frac{\mu_2-\mu_1}{g_2(\mu_2-\mu_1)-\lambda_2},
			\qquad
			U_2:=-\frac{\lambda_2}{g_2(\mu_2-\mu_1)-\lambda_2}.
			\]
			For $x>v$, the first and last terms in this density cancel. \hfill$\Diamond$}
	\end{remark}

	\subsection{Numerical example}
	We conclude this section by presenting numerical results. We consider an instance of the
	queueing model and evaluate the mean workload, the variance of the workload, and the
	probability that the system is empty. To obtain these performance measures at
	\textit{deterministic} time points, we use the numerical Laplace inversion algorithm
	from~\cite[\S8]{AW2}. The required transforms of the mean, variance, and empty-system
	probability at an exponential time $T_\beta$ are derived from
	Theorem~\ref{theorem: transient LST chi(x)}.
	
	For the instance considered, we work with the same CPPs $Y_1$ and $Y_2$ as in Subsection \ref{sec:numruin}. The threshold lies at $v=4$.
	
	In Figure \ref{fig: queueing model example} we present the mean workload, its variance, and the
	probability of an empty system as functions of time, for different initial workload values, as well as the LST of the steady-state workload.
	The figure also includes Monte Carlo estimates with 95\% confidence intervals for comparison, obtained from $5\cdot 10^3$ samples for the mean, variance and probability of an empty system, and $10^3$ samples for the estimates of the LST. The solid lines represent the values obtained by numerical Laplace inversion,
	while the markers show the corresponding Monte Carlo estimates. 
	
	\begin{figure}[ht]
		\centering
		\begin{subfigure}{0.49\textwidth}
			\includegraphics[width=\textwidth]{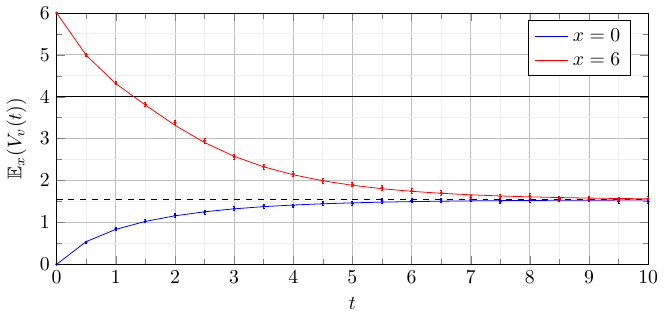}
			\caption{\label{fig: Queueing model mean}}
		\end{subfigure}
		\hfill
		\begin{subfigure}{0.49\textwidth}
			\includegraphics[width=\textwidth]{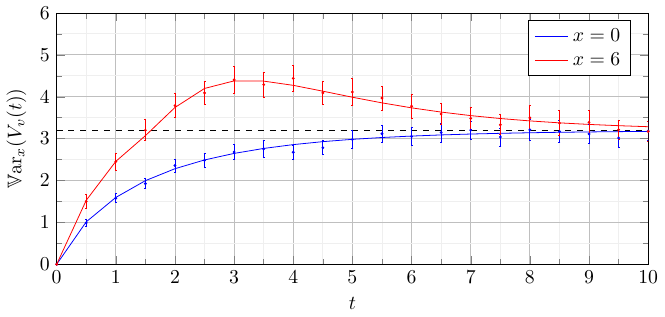}
			\caption{\label{fig: Queueing model var}}
		\end{subfigure}
		\hfill
		\begin{subfigure}{0.49\textwidth}
			\includegraphics[width=\textwidth]{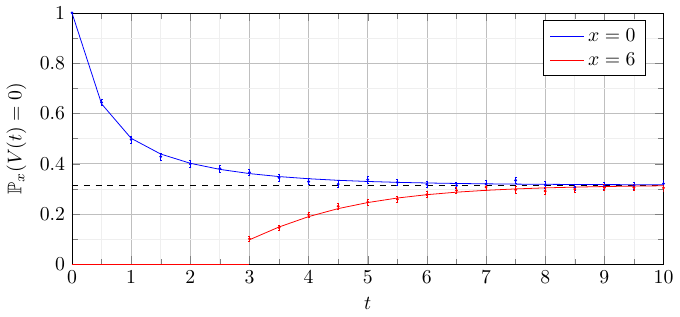}
			\caption{\label{fig: Queueing model empty}}
		\end{subfigure}
		\hfill
		\begin{subfigure}{0.49\textwidth}
			\includegraphics[width=\textwidth]{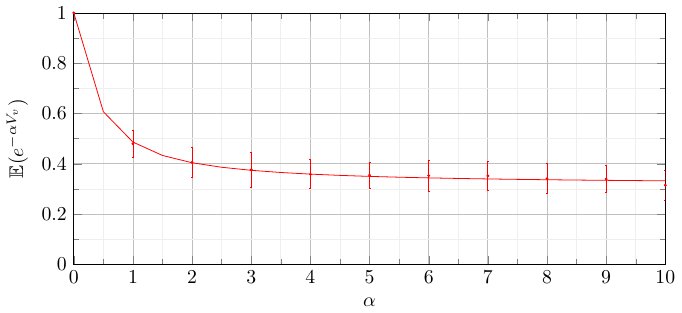}
			\caption{\label{fig: Queueing model SS dens}}
		\end{subfigure}
		\caption{\label{fig: queueing model example}Mean ({\sc a}), variance ({\sc b}) and probability of an empty system ({\sc c}) as functions of time for the instance of the queueing model, and the LST of the steady-state ({\sc d}) as function of $\alpha$. The dashed lines denote the stationary values of the system, which are approximately $1.54$, $3.19$ and $0.32$ respectively.}
	\end{figure}

	\section{{Queueing model: Capacity constraints and the running maximum}}\label{sec: expansions}
	In this section, we consider two further aspects of the queueing model introduced in Section~\ref{sec: queueing model}: (i) an extension incorporating a capacity above the threshold $v$, and (ii) the running maximum of the queue. Further extensions are discussed in Section~\ref{sec:disc}.

	\subsection{Finite-capacity {queueing model}}\label{sec: capacity}
	
	As a first extension, we impose a capacity $K>v$ on the workload system. Let $(\hat{V}_v(t))_{t\geqslant0}$ denote the resulting workload process, with $0\leqslant\hat{V}_v(t)\leqslant K$. We assume partial rejection: if an arriving job causes the workload to exceed $K$, the workload is set to $K$ and the excess work is discarded. For an initial workload $x\in[0,K]$, we study the LST
	\begin{equation}
		\hat\chi(x)\equiv\hat\chi(x,\alpha,\beta)
		:=\mathbb{E}_x\left(e^{-\alpha\hat{V}_v(T_\beta)}\right)
		=\mathbb{E}\left(e^{-\alpha\hat{V}_v(T_\beta)}
		\,\middle|\,\hat{V}_v(0)=x\right).
	\end{equation}

	\subsubsection*{Decomposition}
	In the following proposition, which is similar to Proposition \ref{prop: chi(x)}, we show how $\hat\chi(x)$ can be expressed in terms of $\hat\chi(0), \hat\chi(v)$ and $\hat\chi(K)$. 
	\begin{proposition}\label{prop: chi(x) capacity}
		Let $\beta>0$.  For $x\in(v,K]$
		\begin{equation}\label{eq: chi(x) capacity v < x < K}
			\hat\chi(x) = \delta_{2,-}(x-v, K-x, \beta)\hat\chi(v) + \delta_{2,+}(x-v, K-x, \beta)\hat\chi(K) + \delta_{2,\star}(x-v, K-x, \beta),
		\end{equation}
		and for  $x\in[0,v]$
		\begin{align}\label{eq: chi(x) capacity 0 < x < v}
			\notag\hat\chi(x) =&\: \delta_{1,-}(x, v-x, \beta)\hat\chi(0) + \int_0^{K-v} \tilde{h}_1(x,v-x,y,\beta)\delta_{2,-}(y,(K-v)-y,\beta)\,\mathrm{d}y\,\hat\chi(v) \:+\\
			\notag&\: \int_0^{K-v} \tilde{h}_1(x,v-x,y,\beta)\delta_{2,+}(y,(K-v)-y,\beta)\,\mathrm{d}y\,\hat\chi(K) \:+\\
			\notag&\: \int_{K-v}^\infty \tilde{h}_1(x,v-x,y,\beta)\,\mathrm{d}y\,\hat\chi(K) \:+\\
			&\: \int_0^{K-v} \tilde{h}_1(x,v-x,y,\beta)\delta_{2,\star}(y,(K-v)-y,\beta)\,\mathrm{d}y + \delta_{1,\star}(x, v-x, \beta).
		\end{align}
	\end{proposition}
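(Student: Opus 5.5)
The plan is to mirror the proof of Proposition~\ref{prop: chi(x)}: condition on the first event of the relevant free process and apply the strong Markov property. The finite capacity $K$ changes the argument in two places. First, above $v$ we now have a two-sided exit problem rather than a one-sided one. Second, a jump above $v$ may overshoot $K$, in which case the workload is set to $K$.

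\emph{Case $x\in(v,K]$.} Here the workload evolves as $x+Y_2$ until one of three events occurs: it reaches $v$ from above (continuously, since there are no negative jumps), it is pushed to level $K$, or the clock $T_\beta$ rings. In the notation of $Y_2$ started at $0$, these correspond to $\sigma_2(x-v)$, $\tau_2(K-x)$ and $T_\beta$.

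There is one modelling subtlety. At level $K$ itself the process sits at the cap and immediately drifts down. So for $x=K$ we should interpret $\tau_2(0)$ as the first jump time after time $0$, or else simply note that the formula is only used with $x<K$ and that $\hat\chi(K)$ is determined separately. In either case, with $\hat\chi(K)$ treated as an unknown the identity is consistent.

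\begin{itemize}
\item If the process reaches $K$ first, then any overshoot is discarded, so the state is exactly $K$. The strong Markov property then gives the factor $\hat\chi(K)$, and the probability of this event is $\delta_{2,+}(x-v,K-x,\beta)$.
\item If it reaches $v$ first, we obtain $\delta_{2,-}(x-v,K-x,\beta)\hat\chi(v)$.
\item If killing occurs first, the contribution is $\mathbb{E}[e^{-\alpha(x+Y_2(T_\beta))};T_\beta<\min\{\sigma_2,\tau_2\}]$. Writing $x=v+(x-v)$, this equals $e^{-\alpha v}\delta_{2,\star}(x-v,K-x)$ by definition \eqref{eq: delta_star def}.
\end{itemize}

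This last term is where I would check the normalisation carefully. The statement writes the third term without the factor $e^{-\alpha v}$, so either $\delta_{2,\star}$ is understood for the shifted process, or a factor $e^{-\alpha v}$ has been absorbed. I would make this convention explicit before proceeding.

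\emph{Case $x\in[0,v]$.} The workload evolves as the reflected $Y_1$. As in Proposition~\ref{prop: chi(x)}, before leaving $[0,v]$ there is no reflection, because hitting $0$ is itself a stopping event. Three things can happen first:
\begin{itemize}
\item The process hits $0$, with probability $\delta_{1,-}(x,v-x,\beta)$, giving the term with $\hat\chi(0)$.
\item Killing occurs first, giving $\delta_{1,\star}(x,v-x)$.
\item The process crosses $v$ by a jump with overshoot $y$, whose defective density is $\tilde h_1(x,v-x,y,\beta)$.
\end{itemize}
In the third event there are two sub-cases:
\begin{itemize}
\item If $y\geqslant K-v$, the state is capped at $K$, which gives $\int_{K-v}^\infty\tilde h_1\,\mathrm{d}y\,\hat\chi(K)$.
\item If $y<K-v$, the new state is $v+y\in(v,K)$. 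We then substitute the first case, with $x-v=y$ and $K-x=(K-v)-y$, and integrate against $\tilde h_1$. This produces the three integral terms with $\delta_{2,-}$, $\delta_{2,+}$ and $\delta_{2,\star}$.
\end{itemize}

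Justifying the conditioning requires the strong Markov property at $\tau_1$. We also need that, conditional on the overshoot, the future depends only on the position $v+y$ and not on the pre-crossing history, which holds since $\hat V_v$ is a piecewise Lévy Markov process.

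I expect the main obstacle to be bookkeeping rather than substance. The points to keep straight are the boundary behaviour at $K$ (instantaneous leaving and the meaning of $\tau_2(0)$), the measurability and strong Markov justification for the capped process, and the correct exponential prefactor in the killing terms. The probabilistic decomposition itself is a direct analogue of Proposition~\ref{prop: chi(x)}.
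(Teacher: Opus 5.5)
Your proposal follows the paper's proof: condition on the first event and use the strong Markov property, then substitute the $(v,K]$ formula into the overshoot integral against $\tilde h_1$. Your normalisation remark is also correct: the paper's $\delta_{i,\star}(u_-,u_+)$ carries the prefactor $e^{-\alpha u_-}$, so it measures level relative to the lower boundary, and the killing contribution for $x\in(v,K]$ is therefore $e^{-\alpha v}\delta_{2,\star}(x-v,K-x)$; the paper's statement omits this factor (equivalently, silently uses a shifted convention) here, in the formula for $x\in[0,v]$, and in $\hat{\bs b}$.

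At $x=K$ you need neither a special convention nor an exclusion, and excluding it would lose the third row of $\hat{\bs P}$, which the paper obtains from exactly this case. The point is that $\tau_2(0)=\inf\{t\geqslant0:Y_2(t)>0\}$ is a.s.\ positive and is the first time a jump carries $Y_2$ back above $0$, not simply the first jump. Hence the same argument applies verbatim at $x=K$.
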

	\begin{proof}
		We start with the first claim. For $v < x \leqslant K$, conditioning on the first event, which is either killing, hitting $v$, or hitting $K$, and applying the strong Markov property yields \eqref{eq: chi(x) capacity v < x < K}.
		
		For the second claim, for $0 < x \leqslant v$, conditioning on the first event and applying the strong Markov property gives
		\begin{align*}
			\notag\hat\chi(x) &= \delta_{1,-}(x, v-x, \beta)\hat\chi(0) + \int_0^{K-v} \tilde{h}_1(x,v-x,y,\beta)\hat\chi(v+y)\,\mathrm{d}y\\
			&\:\:\:\:\:+ \int_{K-v}^\infty \tilde{h}_1(x,v-x,y,\beta)\,\mathrm{d}y\,\hat\chi(K) + \delta_{1,\star}(x, v-x, \beta).
		\end{align*}
		Substituting \eqref{eq: chi(x) capacity v < x < K} into the first integral yields
		\begin{align*}
			\int_0^{K-v}  \tilde{h}_1(x,v-x,y,\beta)\hat\chi(v+y)\,\mathrm{d}y 
			=\:& \int_0^{K-v} \tilde{h}_1(x,v-x,y,\beta)\delta_{2,-}(y,(K-v)-y,\beta)\,\mathrm{d}y\,\hat\chi(v) \:+\\
			& \int_0^{K-v} \tilde{h}_1(x,v-x,y,\beta)\delta_{2,+}(y,(K-v)-y,\beta)\,\mathrm{d}y\,\hat\chi(K)\:+ \\
			& \int_0^{K-v} \tilde{h}_1(x,v-x,y,\beta)\delta_{2,\star}(y,(K-v)-y,\beta)\,\mathrm{d}y,
		\end{align*}
		which yields \eqref{eq: chi(x) capacity 0 < x < v}.
	\end{proof}
	
	\subsubsection*{Derivation of \texorpdfstring{$\hat\chi(0)$}{chi 0}, \texorpdfstring{$\hat\chi(v)$}{chi v} and \texorpdfstring{$\hat\chi(K)$}{chi K}}
	We characterize  $\hat{\bs \chi}:=(\hat\chi(0) ,
	\hat\chi(v) ,
	\hat\chi(K))^\top$ by a system of three linear equations. We first decompose $\hat\chi(0)$ in terms of these quantities, while $\hat\chi(v)$ and $\hat\chi(K)$ follow from the decomposition in Proposition \ref{prop: chi(x) capacity}. We define the $3\times3$ matrix $\hat{\bs{P}}$ through its entries: for the first row, we have
	\begin{align*}
		\hat{P}_{11} &:= \frac{\lambda_1}{\lambda_1 + \beta}\int_{[0,v]}\mathbb{P}(B_1\in\mathrm{d}x)\delta_{1,-}(x,v-x,\beta),\\
		\hat{P}_{12} &:= \frac{\lambda_1}{\lambda_1+\beta}\int_{[0,v]} \mathbb{P}(B_1\in\mathrm{d}x)\int_{0}^{K-v}\tilde{h}_1(x,v-x,y,\beta)\delta_{2,-}(y,K-v-y,\beta)\,\mathrm{d}y \\
		&\:\:\:\:\:\:\:\:\:\: + \frac{\lambda_1}{\lambda_1 + \beta}\int_{(v,K]} \mathbb{P}(B_1\in\mathrm{d}x)\delta_{2,-}(x-v,K-x,\beta), \\
		\hat{P}_{13} &:= \frac{\lambda_1}{\lambda_1+\beta}\int_{[0,v]} \mathbb{P}(B_1\in\mathrm{d}x)\int_{0}^{K-v}\tilde{h}_1(x,v-x,y,\beta)\delta_{2,+}(y,K-v-y,\beta)\,\mathrm{d}y \\
		&\:\:\:\:\:\:\:\:\:\: + \frac{\lambda_1}{\lambda_1 + \beta}\int_{(v,K]} \mathbb{P}(B_1\in\mathrm{d}x)\delta_{2,+}(x-v,K-x,\beta) + \frac{\lambda_1}{\lambda_1 + \beta} \int_{(K,\infty)} \mathbb{P}(B_1\in\mathrm{d}x),
	\end{align*}
	the second row is given by
	\begin{align*}
		\hat{P}_{21} &:= \delta_{1,-}(v,0,\beta),\qquad
		\hat{P}_{22} := \int_0^{K-v} \tilde{h}_1(v,0,y,\beta)\delta_{2,-}(y,K-v-y,\beta)\,\mathrm{d}y, \\
		\hat{P}_{23} &:= \int_0^{K-v} \tilde{h}_1(v,0,y,\beta)\delta_{2,+}(y,K-v-y,\beta)\,\mathrm{d}y + \int_{K-v}^\infty \tilde{h}_1(v,0,y,\beta)\,\mathrm{d}y,
	\end{align*}
	and finally, for the third row we have
	\begin{align*}
		\hat{P}_{31} &:= 0,\qquad
		\hat{P}_{32} := \delta_{2,-}(K-v,0,\beta), \qquad
		\hat{P}_{33} := \delta_{2,+}(K-v,0,\beta).
	\end{align*}
	We also define a vector $\hat{\bs{b}}\in\mathbb{R}^3$ through its entries:
	\begin{align*}
		\hat{b}_1 &:= \frac{\beta}{\lambda_1 + \beta} + \frac{\lambda_1}{\lambda_1 + \beta}\int_{[0,v]} \mathbb{P}(B_1\in\mathrm{d}x)\delta_{1,\star}(x,v-x,\beta) \\
		&\:\:\:\:\:\:\:\:\:\: + \frac{\lambda_1}{\lambda_1 + \beta}\int_{(v,K]} \mathbb{P}(B_1\in\mathrm{d}x)\delta_{2,\star}(x-v,K-x,\beta), \\
		\hat{b}_2 &:= \int_0^{K-v} \tilde{h}_1(v,0,y,\beta)\delta_{2,\star}(y,(K-v)-y,\beta)\,\mathrm{d}y + \delta_{1,\star}(v, 0, \beta), \qquad
		\hat{b}_3 := \delta_{2,\star}(K-v, 0, \beta).
	\end{align*}

	\begin{lemma}\label{lemma: expansion2 chi(0) chi(v) chi(K)}
		For $\beta>0$, 
		\begin{equation}
			\hat{\bs \chi}
			= 
			(\bs{I} - \hat{\bs P})^{-1}
			\hat{\bs{b}}.
		\end{equation}
	\end{lemma}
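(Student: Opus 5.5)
We establish the identity $\hat{\bs\chi}=\hat{\bs P}\hat{\bs\chi}+\hat{\bs b}$ row by row, and then show that $\bs I-\hat{\bs P}$ is invertible. This follows the derivation of \eqref{eq: matrix equation chi(0) and chi(v)} in Section~\ref{sec: chi 0 and chi K}.

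\emph{First row.} Start at $0$ and condition on the first event. With probability $\beta/(\lambda_1+\beta)$ killing occurs first and contributes $1$. Otherwise an arrival of size $B_1$ moves the workload to $x$. We split according to $x\in[0,v]$, $x\in(v,K]$ and $x\in(K,\infty)$; in the last case the workload is truncated to $K$, which yields $\hat\chi(K)$. For $x\in[0,v]$ we substitute \eqref{eq: chi(x) capacity 0 < x < v}, and for $x\in(v,K]$ we substitute \eqref{eq: chi(x) capacity v < x < K}. Collecting the coefficients of $\hat\chi(0)$, $\hat\chi(v)$ and $\hat\chi(K)$ then gives exactly $\hat P_{11},\hat P_{12},\hat P_{13}$, and the constant terms give $\hat b_1$. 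The term $\int_{K-v}^\infty\tilde h_1\,\mathrm{d}y$ inside the $[0,v]$ integral must be checked against $\hat P_{13}$; either it is absorbed there or we add it.

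\emph{Second and third rows.} For the second row, evaluate \eqref{eq: chi(x) capacity 0 < x < v} at $x=v$; this gives $\hat P_{21},\hat P_{22},\hat P_{23},\hat b_2$ directly. For the third row, evaluate \eqref{eq: chi(x) capacity v < x < K} at $x=K$. Here the upper distance is $0$, and $\delta_{2,+}(K-v,0,\beta)$ is the probability of an immediate up-crossing, i.e.\ of staying at the capacity level before hitting $v$ or killing. Reading $\hat\chi(K)$ through this regenerative structure produces $\hat P_{31}=0$, $\hat P_{32}$, $\hat P_{33}$ and $\hat b_3$. One subtlety needs care: at $K$ the workload is reflected, so the decomposition in Proposition~\ref{prop: chi(x) capacity} at $x=K$ has to be interpreted with the convention that re-hitting $K$ restarts the process from $K$. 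The third row is then a renewal-type equation in $\hat\chi(K)$.

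\emph{Invertibility.} Each entry of $\hat{\bs P}$ is the probability of a disjoint event "the next regeneration state is $0$, $v$ or $K$, reached before $T_\beta$". Hence $\hat{\bs P}$ is entrywise nonnegative, and each row sum equals the probability that some regeneration state is reached before killing. Since $\beta>0$ and the times to reach a regeneration state are nondegenerate, every row sum is strictly less than $1$:
\begin{itemize}
\item in row 1, killing can occur before the first arrival;
\item in rows 2 and 3, killing can occur during the positive-duration excursion.
\end{itemize}
Thus $\|\hat{\bs P}\|_\infty<1$, so $\bs I-\hat{\bs P}$ is invertible, with Neumann series inverse, and $\hat{\bs\chi}=(\bs I-\hat{\bs P})^{-1}\hat{\bs b}$.

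The main obstacle I expect is the third row. Justifying the boundary behaviour at $K$, namely that $\delta_{2,+}(K-v,0,\beta)<1$ and that the decomposition from $K$ correctly captures the reflection at capacity, requires careful treatment of the degenerate upper distance $0$. This should be handled via a limiting argument $x\uparrow K$ in \eqref{eq: chi(x) capacity v < x < K}.
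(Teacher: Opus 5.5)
Your argument follows the paper's proof: a first-step analysis from $0$ split over $x\in[0,v]$, $(v,K]$ and $(K,\infty)$; rows two and three read off from Proposition~\ref{prop: chi(x) capacity} at $x=v$ and $x=K$; and invertibility from row sums strictly below one. Two corrections are needed.

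\textbf{Missing terms in the first row.} Your hedge about $\hat P_{13}$ is justified. As printed, $\hat P_{13}$ lacks the term $\frac{\lambda_1}{\lambda_1+\beta}\int_{[0,v]}\mathbb{P}(B_1\in\mathrm{d}x)\int_{K-v}^\infty\tilde h_1(x,v-x,y,\beta)\,\mathrm{d}y$, so this term must be added. The same happens with $\hat b_1$, which lacks $\frac{\lambda_1}{\lambda_1+\beta}\int_{[0,v]}\mathbb{P}(B_1\in\mathrm{d}x)\int_0^{K-v}\tilde h_1(x,v-x,y,\beta)\,\delta_{2,\star}(y,K-v-y,\beta)\,\mathrm{d}y$. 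Both omissions are the row-one counterparts of terms that do appear in $\hat P_{23}$ and $\hat b_2$. So your statement that the constant terms give exactly $\hat b_1$ is false for the printed $\hat b_1$. The first row holds only after both terms are added.

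\textbf{The boundary at $K$.} The \emph{main obstacle} you name rests on a misreading. Since $Y_2$ has negative drift, the workload leaves $K$ downward immediately, and $\tau_2(0)>0$ almost surely. Hence $\delta_{2,+}(K-v,0,\beta)$ is not the probability of an immediate up-crossing or of staying at $K$. It is the probability that a later jump carries the free process above $K$ before it drifts down to $v$ and before $T_\beta$. At that moment the workload is truncated back to $K$ and, by the strong Markov property, restarts there.

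Consequently Proposition~\ref{prop: chi(x) capacity}, whose range $(v,K]$ includes $x=K$, applies directly, and no limiting argument $x\uparrow K$ is needed. The bound $\delta_{2,-}(K-v,0,\beta)+\delta_{2,+}(K-v,0,\beta)<1$ then follows from your own killing argument, because $\min\{\sigma_2(K-v),\tau_2(0)\}>0$ almost surely.
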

	\begin{proof}
		The second and third entries of $(\bs{I}-\hat{\bs P})\hat{\bs\chi}=\hat{\bs b}$ follow directly from Proposition~\ref{prop: chi(x) capacity}, with $x=v$ and $x=K$, respectively.
		For the first entry, corresponding to a system that is initially empty, we again condition on the first event. In this case this is either killing or an arrival from $Y_1$. In the case of an arrival, the job 
		size can either be less than or equal to $v$, greater than $v$ and less than or equal to $K$, or greater than $K$. This gives
		\begin{align*}
			\hat\chi(0) &= \frac{\beta}{\lambda_1 + \beta} + \frac{\lambda_1}{\lambda_1+\beta}\int_0^\infty \mathbb{P}\left(B_1\in\mathrm{d}x\right)\hat\chi(x),
		\end{align*}
		where
		\begin{align*}
			\int_0^\infty \mathbb{P}\left(B_1\in\mathrm{d}x\right)\hat\chi(x)&= \int_{[0,v]} \mathbb{P}\left(B_1\in\mathrm{d}x\right)\hat\chi(x) + \int_{(v,K]} \mathbb{P}\left(B_1\in\mathrm{d}x\right)\hat\chi(x) + \int_{(K,\infty)} \mathbb{P}\left(B_1\in\mathrm{d}x\right)\hat\chi(K).
		\end{align*}
		Substituting the decompositions for $\chi(x)$ as given in Proposition \ref{prop: chi(x) capacity} under their respective integrals gives the first entry of $(\bs{I} - \hat{\bs P})\hat{\bs \chi}
		= 
		\hat{\bs{b}}$.
		In order to show that $\bs{I} - \hat{\bs P}$ is invertible, it suffices to observe that the row-sums of $\hat{\bs P}$ are less than one entry-wise. 
	\end{proof}

	\subsection{Running maximum of queueing model}\label{sec: running max queueing}
	
	In this subsection we aim to characterize the distribution of the running maximum of the workload process. In our analysis of this specific process we rely on the methods used in Sections \ref{sec: Cramer-Lundberg model}--\ref{sec: queueing model}. Related results on maxima of workload processes over random time intervals can be
	found in the literature, although the focus is often on maxima over busy periods or
	regenerative cycles; see, e.g., \cite{AP,ASM3}. For independently
	sampled time intervals, \cite{COH} derives, in particular, the LST of the maximum workload over an exponentially distributed interval in the
	M/G/1 queue; see \cite[Eqns.~(5.76)--(5.79)]{COH}. An alternative approach has been developed in \cite[Exercise 5.4]{MB}.
	
	Let $\bar{V}_v(t):=\sup_{s\in[0,t]}V_v(s)$ denote the running maximum of the workload process. We analyze its tail probability at an independent time $T_\beta\sim\mathrm{Exp}(\beta)$, $\beta>0$,
	\begin{equation*}
		p^V(u,v,x,\beta):=\mathbb{P}\bigl(\bar{V}_v(T_\beta)>u \big|V_v(0)=x\bigr), \qquad u\geqslant0.
	\end{equation*}
	As in Section \ref{sec: Cramer-Lundberg model}, we distinguish cases according to $x$ relative to $u$, with $p^V(u,v,x,\beta)=1$ for $u<x$. We include the case $0\leqslant u\leqslant v$, where the probability under study reduces to the running maximum of the M/G/1 queue,  for later reference.

	\begin{lemma}
		Let $0\leqslant u \leqslant v$ and $\beta > 0$.  For $x=0$
		\begin{align}\label{eq: p^V for 0<=u<=v and x=0}
			p^V(u,v,0,\beta) = \frac{\int_{(u,\infty)} \mathbb{P}(B_1\in\mathrm{d}y) + \int_{[0,u]} \mathbb{P}(B_1\in\mathrm{d}y)\,\delta_{1,+}(y,u-y,\beta)}{\displaystyle{{\frac{\lambda_1 + \beta}{\lambda_1}}} - \int_{[0,u]} \mathbb{P}(B_1\in\mathrm{d}y)\,\delta_{1,-}(y,u-y,\beta)},
		\end{align}
		and for $0<x\leqslant u$
		\begin{align}\label{eq: p^V for 0<=u<=v and 0<x<=v}
			p^V(u,v,x,\beta) = \delta_{1,+}(x,u-x,\beta) + \delta_{1,-}(x,u-x,\beta)p^V(u,v,0,\beta).
		\end{align}
	\end{lemma}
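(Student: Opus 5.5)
We will prove the lemma by a first-step decomposition combined with the strong Markov property, following the proofs of Lemmas~\ref{lemma: ruin prob v=0} and~\ref{lemma: ruin prob v<0}. The basic observation is that when $u\leqslant v$, the running maximum exceeds $u$ at or before the first time the workload exceeds $v$. Up to that moment the workload is the reflected version of $Y_1$ at zero. Hence only $Y_1$ is relevant, and the event $\{\bar V_v(T_\beta)>u\}$ coincides with the event that this reflected $Y_1$ process exceeds $u$ before $T_\beta$.

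\textbf{Case $0<x\leqslant u$.} Run the free process $Y_1$ from $x$. Reflection at zero plays no role before the workload hits $0$. The first relevant event is one of three:
\begin{itemize}
\item $Y_1$ exceeds $u$ (level $u-x$ above the start) before reaching $0$ and before killing, which has probability $\delta_{1,+}(x,u-x,\beta)$ and gives ruin immediately;
\item $Y_1$ reaches $0$ first, which has probability $\delta_{1,-}(x,u-x,\beta)$;
\item killing occurs first, which contributes nothing.
\end{itemize}
Since $Y_1$ has no negative jumps, it reaches $0$ exactly, at a stopping time. By the strong Markov property and the memorylessness of $T_\beta$, the process restarts from workload $0$, and the conditional probability is $p^V(u,v,0,\beta)$. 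This gives \eqref{eq: p^V for 0<=u<=v and 0<x<=v}. When $x=u$, we have $\delta_{1,+}(u,0,\beta)=1$ by regularity of upward crossing for a process with positive jumps and negative drift, so $\delta_{1,-}(u,0,\beta)=0$. This is consistent with the fact that the process exceeds $u$ immediately.

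\textbf{Case $x=0$.} Starting empty, the workload stays at $0$ until the first event, which is killing with probability $\beta/(\lambda_1+\beta)$ or an arrival with probability $\lambda_1/(\lambda_1+\beta)$. An arrival of size $y$ produces the following outcomes:
\begin{itemize}
\item if $y>u$, the maximum exceeds $u$ at once;
\item if $y\in(0,u]$, we are in the previous case with $x=y$, so we substitute \eqref{eq: p^V for 0<=u<=v and 0<x<=v};
\item an atom of $B_1$ at $0$ corresponds to returning to state $0$, and this is covered by the $\delta_{1,-}(0,u,\beta)=1$ convention in the integral over $[0,u]$.
\end{itemize}
This yields the renewal-type equation
\[
p^V(u,v,0,\beta)=\frac{\lambda_1}{\lambda_1+\beta}\Bigl(\int_{(u,\infty)}\mathbb P(B_1\in\mathrm dy)+\int_{[0,u]}\mathbb P(B_1\in\mathrm dy)\bigl[\delta_{1,+}(y,u-y,\beta)+\delta_{1,-}(y,u-y,\beta)\,p^V(u,v,0,\beta)\bigr]\Bigr).
\]
Multiplying by $(\lambda_1+\beta)/\lambda_1$ and solving for $p^V(u,v,0,\beta)$ gives \eqref{eq: p^V for 0<=u<=v and x=0}.

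\textbf{Solvability and main obstacle.} The equation can be solved because $\int_{[0,u]}\mathbb P(B_1\in\mathrm dy)\,\delta_{1,-}\leqslant 1<(\lambda_1+\beta)/\lambda_1$ when $\beta>0$, so the denominator is positive. The main point needing care is the boundary cases:
\begin{itemize}
\item the exact meaning of ``exceeds'' at $y=u$, where the strict inequality $>u$ must be matched with the definition of $\tau$, for which $\delta_{1,+}(u,0,\beta)=1$;
\item $u=0$, where the integral over $[0,u]$ reduces to the atom at $0$ and the formula must still be interpreted sensibly.
\end{itemize}
We will check these by direct substitution. The rest consists of routine applications of the strong Markov property.
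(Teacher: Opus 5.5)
Your proof is correct and follows the paper's argument. For $0<x\leqslant u$ you split on whether $u$ is exceeded before the workload empties (and before killing) or the system empties first and restarts from $0$. For $x=0$ you condition on killing versus the first arrival, substitute the first case, and solve the resulting linear equation.

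One side remark is false and should be dropped. Started from $x=u$, the path is $u+r_1t<u$ until the first jump, so it does not exceed $u$ immediately. Hence $\delta_{1,+}(u,0,\beta)<1$, and in fact $\delta_{1,-}(u,0,\beta)=W_1^{(\beta)}(0)/W_1^{(\beta)}(u)>0$, because $W_1^{(\beta)}(0)=1/(-r_1)>0$ (Lemma~\ref{lemma: deltas}). This does not damage your derivation: the strong Markov argument applies verbatim at $x=u$, and an arrival of size exactly $u$ is correctly handled inside the $[0,u]$ integral. However, the boundary ``check'' you propose, based on $\delta_{1,+}(u,0,\beta)=1$, would be wrong.
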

	\begin{proof}
		For $x=0$, conditioning on the first arrival and on whether killing occurs before it gives
		\begin{align*}
			p^V(u,v,0,\beta) &= \frac{\lambda_1}{\lambda_1 + 
				\beta}\Bigg(\int_{(u,\infty)}\mathbb{P}(B_1\in\mathrm{d}y) + \int_{[0,u]}\mathbb{P}(B_1 \in \mathrm{d}y)\,\delta_{1,+}(y,u-y,\beta) \\
			&\:\:\:\:\:\:\:\:\:\: + \int_{[0,u]}\mathbb{P}(B_1 \in \mathrm{d}y)\,\delta_{1,-}(y,u-y,\beta)\,p^V(u,v,0,\beta)\Bigg).
		\end{align*}
		Rearranging yields \eqref{eq: p^V for 0<=u<=v and x=0}.
		For $0<x\leqslant u$, either $u$ is crossed before the workload reaches zero or the system empties first, in which case the process restarts from $0$. This immediately yields \eqref{eq: p^V for 0<=u<=v and 0<x<=v}.
	\end{proof}

	\begin{lemma}
		Let $ u > v$ and $\beta > 0$. For $x\in (u,v)$
		\begin{align}\label{eq: p^V for u>v and v<x<u}
			p^V(u,v,x,\beta) = \delta_{2,+}(x-v, u-x, \beta) + \delta_{2,-}(x-v, u-x, \beta)p^V(u,v,v,\beta),
		\end{align}
		and for $x\in(0, v]$
		\begin{align}
			\notag p^V(u,v,x,\beta) &= \int_{u-v}^\infty \tilde{h}_1(x,v-x,y,\beta)\,\mathrm{d}y + \int_0^{u-v} \tilde{h}_1(x,v-x,y,\beta)\delta_{2,+}(y,u-v-y,\beta)\,\mathrm{d}y \\
			\notag&\:\:\:\:\:+\int_0^{u-v} \tilde{h}_1(x,v-x,y,\beta)\delta_{2,-}(y,u-v-y,\beta)\,\mathrm{d}y\cdot p^V(u,v,v,\beta) \\
			&\:\:\:\:\:+\delta_{1,-}(x,v-x,\beta)p^V(u,v,0,\beta). \label{eq: p^V for u>v and 0<x<=v}
		\end{align}
	\end{lemma}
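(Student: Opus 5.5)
The plan is a first-step decomposition with the strong Markov property, exactly parallel to Lemma~\ref{lemma: ruin prob v>0} and Proposition~\ref{prop: chi(x)}. Note that the first display is meant for $x\in(v,u)$; the interval is written as $(u,v)$ in the statement but should be read this way. Throughout, killing at $T_\beta$ before the relevant event contributes nothing, because it leaves $\bar V_v(T_\beta)\leqslant u$. So only the hitting probabilities and overshoot densities \emph{before killing} appear.

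\textbf{Case $x\in(v,u)$.} While the workload stays above $v$, $V_v$ coincides with $x+Y_2$ and is not reflected. We therefore consider the exit of $Y_2$ from the interval $[-(x-v),\,u-x]$. There are three possibilities.
\begin{itemize}
\item The upper level is crossed first, before killing. This has probability $\delta_{2,+}(x-v,u-x,\beta)$ by \eqref{eq: def delta_plus}, and the running maximum then exceeds $u$.
\item The lower level $v$ is hit first, before killing. This has probability $\delta_{2,-}(x-v,u-x,\beta)$. Because $Y_2$ has no negative jumps, the workload is then exactly $v$. By the strong Markov property and memorylessness of $T_\beta$, the conditional probability of subsequently exceeding $u$ is $p^V(u,v,v,\beta)$; the running maximum so far is below $u$, so it does not interfere.
\item Killing occurs first, which contributes zero.
\end{itemize}
This gives \eqref{eq: p^V for u>v and v<x<u}.

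\textbf{Case $x\in(0,v]$.} Now the workload follows $x+Y_1$ until $Y_1$ exits $[-x,\,v-x]$. Three events matter.
\begin{itemize}
\item Zero is hit first. This has probability $\delta_{1,-}(x,v-x,\beta)$, after which the process restarts from $0$. It contributes $\delta_{1,-}(x,v-x,\beta)\,p^V(u,v,0,\beta)$.
\item Level $v$ is crossed by a jump first, with overshoot density $\tilde h_1(x,v-x,y,\beta)$ as in Section~\ref{sec: overshoot II phase type}. The workload becomes $v+y$, and we split on $y$:
\begin{itemize}
\item If $y>u-v$, then $u$ is already exceeded. This gives the term $\int_{u-v}^\infty\tilde h_1\,\mathrm dy$.
\item If $y\leqslant u-v$, the strong Markov property at the crossing time places us in the first case with $x=v+y$. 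We then insert \eqref{eq: p^V for u>v and v<x<u}, whose arguments become $\delta_{2,\pm}(y,u-v-y,\beta)$, and integrate against $\tilde h_1$. This produces the middle two terms, the second carrying the factor $p^V(u,v,v,\beta)$.
\end{itemize}
\item Killing occurs first, which again contributes zero.
\end{itemize}
Summing the three events gives \eqref{eq: p^V for u>v and 0<x<=v}.

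The step needing the most care is the boundary handling, which is where the argument could slip. Three points need checking.
\begin{itemize}
\item The value $x=v$ is included in the second case, and there $\delta_{1,\pm}(v,0,\cdot)$ and $\tilde h_1(v,0,\cdot)$ are well defined because $u_-+u_+=v>0$.
\item The overshoot event $y=u-v$ exactly has measure zero, since $\tilde h_1$ is a density; so the split $y\lessgtr u-v$ is harmless.
\item Crossing $u$ in the definition of $p^V$ means strictly exceeding it, consistent with $\tau$ being defined by the strict inequality $>u_+$.
\end{itemize}

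I would also remark that the resulting system is closed. Setting $x=v$ in the second display, together with the analogous first-arrival equation for $x=0$ (as in the preceding lemma), gives two linear equations for $p^V(u,v,0,\beta)$ and $p^V(u,v,v,\beta)$. This system is solvable because the coefficients are substochastic with row sums strictly less than one when $\beta>0$. That solvability is not needed for the identities themselves.
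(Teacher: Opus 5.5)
Your proposal is correct and takes the same route as the paper. Both arguments use a first-exit decomposition of $Y_2$ from $(v,u)$ and of $Y_1$ from $[0,v]$, with the hitting times of $0$ and $v$ as regeneration points via the strong Markov property. The overshoot over $v$ is split at $u-v$, and the $x\in(v,u)$ case is substituted for the smaller overshoots. You are also right that the interval $(u,v)$ in the first display should be read as $(v,u)$.
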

	\begin{proof}
		In the decomposition of the ruin probabilities, we use the hitting times of the levels
		$0$ and $v$ as regenerative points.
		
		For the first claim, condition on whether the workload process crosses $u$ before reaching $v$
		and before killing, or reaches $v$ first. This directly gives the two terms on the
		right-hand side of \eqref{eq: p^V for u>v and v<x<u}: the first corresponds to
		crossing $u$ before reaching $v$ or being killed, while the second corresponds to
		reaching $v$ first and subsequently crossing $u$ starting from $v$.
		
		For the second claim, starting from $0<x\leqslant v$, we distinguish the possible regenerative events.
		The first term on the right-hand side of
		\eqref{eq: p^V for u>v and 0<x<=v} corresponds to crossing $v$ with an overshoot
		large enough to cross $u$ as well. The second corresponds to crossing $v$ with a
		smaller overshoot and subsequently crossing $u$ before returning to $v$ or being
		killed. The third term covers the case where $v$ is reached again before $u$ is
		crossed or the process is killed. Finally, the fourth term corresponds to reaching
		$0$ before crossing $v$ or being killed.
	\end{proof}
	
	\subsubsection*{Derivation of \texorpdfstring{$p^V(u,v,0,\beta)$}{} and \texorpdfstring{$p^V(u,v,v,\beta)$}{}}
	Similar to the derivation of the main queueing model and the extension in Subsection \ref{sec: capacity},
	$p^V(u,v,v,\beta)$ is obtained from \eqref{eq: p^V for u>v and 0<x<=v} by
	separating the terms containing $p^V(u,v,v,\beta)$, yielding
	\begin{align*}
		p^V(&u,v,v,\beta) = \left({1 - \int_0^{u-v} \tilde{h}_1(v,0,y,\beta)\delta_{2,-}(y,u-v-y,\beta)\,\mathrm{d}y}\right)^{-1}\cdot \\
		&\Bigg( \int_{u-v}^\infty \tilde{h}_1(v,0,y,\beta)\,\mathrm{d}y + \int_0^{u-v} \tilde{h}_1(v,0,y,\beta)\delta_{2,+}(y,u-v-y,\beta)\,\mathrm{d}y+\delta_{1,-}(v,0,\beta)p^V(u,v,0,\beta) \Bigg).
	\end{align*}
	Likewise, conditioning on the first transition when the system is initially empty
	gives
	\begin{align*}
		p^V(u,v,0,\beta) &= \frac{\lambda_1}{\lambda_1 + \beta}\Bigg( \int_u^\infty \mathbb{P}(B_1\in\mathrm{d}y) + \int_v^u \mathbb{P}(B_1\in\mathrm{d}y)\delta_{2,+}(y-v,u-y,\beta) \\
		\:\:\:\:\:&+ \int_v^u \mathbb{P}(B_1\in\mathrm{d}y)\delta_{2,-}(y-v,u-y,\beta)\cdot p^V(u,v,v,\beta) \\
		\:\:\:\:\:&+ \int_0^v \mathbb{P}(B_1\in\mathrm{d}y)\delta_{1,-}(y,v-y,\beta)\cdot p^V(u,v,0,\beta) \\
		\:\:\:\:\:&+ \int_0^v \mathbb{P}(B_1\in\mathrm{d}y)\int_{u-v}^\infty\tilde{h}_{1}(y,v-y,z,\beta)\,\mathrm{d}z \\ 
		\:\:\:\:\:&+ \int_0^v \mathbb{P}(B_1\in\mathrm{d}y)\int_0^{u-v}\tilde{h}_{1}(y,v-y,z,\beta)\delta_{2,+}(z, u-v-z,\beta)\,\mathrm{d}z \\
		\:\:\:\:\:&+ \int_0^v \mathbb{P}(B_1\in\mathrm{d}y)\int_0^{u-v}\tilde{h}_{1}(y,v-y,z,\beta)\delta_{2,-}(z, u-v-z,\beta)\,\mathrm{d}z \cdot p^V(u,v,v,\beta)\Bigg).
	\end{align*}
	Therefore, the two decompositions together form a system of two linear equations in
	$p^V(u,v,0,\beta)$ and $p^V(u,v,v,\beta)$. Solving this system yields the desired
	ruin probabilities.

	\section{Concluding Remarks}\label{sec:disc}
	In this paper, we analyzed the transient behavior of a threshold-dependent Cram\'er--Lundberg model with an {arbitrary} initial surplus and an M/G/1-type queueing model with an arbitrary initial workload. In both models, the dynamics are driven by different compound Poisson processes with drift depending on whether the process is below or above a fixed threshold. We developed procedures to characterize, for the Cram\'er--Lundberg model, the ruin probability before an exponentially distributed epoch and, for the queueing model, the LST of the workload at an exponentially distributed epoch. The resulting Laplace transforms can be used to determine the density of the transient workload via Laplace inversion techniques. For phase-type jump-size distributions, the auxiliary quantities appearing in these characterizations admit explicit representations. 
	
	\medskip
	
	The fluctuation-theoretic results used in our analysis also reveal several interesting connections and possible directions for future research. In particular, a natural extension of the present model would be to consider the broader class of spectrally positive L\'evy processes, although extra care is required in its definition due to possible fluctuations around the threshold level. Our approach, however, relies on properties specific to compound Poisson processes. These properties are used, in particular, in the derivation of the ruin probability with threshold $v=0$ (see Lemma~\ref{lemma: ruin prob v=0}) and in the derivation of the LSTs of the workload initialized at $0$ or at $v$, namely $\chi(0)$ and $\chi(v)$, in Subsection~\ref{sec: chi 0 and chi K}. For a general spectrally positive L\'evy process, these arguments need not carry over. For instance, when the process has a positive Brownian coefficient, both the decomposition of the ruin probability in terms of itself, used in the proof of Lemma~\ref{lemma: ruin prob v=0}, and the decomposition of $\chi(v)$ in terms of $\chi(v)$ and $\chi(0)$, obtained from Proposition~\ref{prop: chi(x)} with $x=v$, reduce to identities that provide no information from which the desired quantities can be determined. Moreover, the derivation of $\chi(0)$ relies on conditioning on the first event, which, in the compound Poisson-driven queueing systems considered here, is either an arrival or a killing event. Extending the analysis to general spectrally positive L\'evy processes would therefore require new arguments and, in particular, new results for these quantities.
	
	\medskip
	
	There are several other directions in which the models could be extended. Multiple thresholds provide a natural generalization, although the resulting derivations would become substantially more involved. For the queueing model, one could also study other transient quantities, such as the cumulative time spent idle or above the threshold; cf.\ \cite[Section 5]{MRS} for an approach that could be followed. The approach in Subsection \ref{sec: running max queueing} may likewise be used to investigate the tail probability of the workload at an exponentially distributed epoch, rather than the tail probability of its running maximum. Finally, given the dynamics of the compound Poisson processes, the threshold itself could be optimized with respect to a specified performance measure or cost function; cf.\ the framework of \cite[Section 6]{ABKM}. While exact solutions to such optimization problems may be difficult to obtain, the characterizations developed in this paper could provide a basis for their numerical investigation.

	\appendix
	
	\section{Equivalence proofs}
	
	The scale-function and martingale methods used in Section~\ref{sec: aux for phase-type} led to different characterizations of $\eta(u_+,\alpha,\beta)$, $\tilde\eta(u_-,u_+,\alpha,\beta)$, and the hitting probabilities. It is therefore natural to ask whether these characterizations are equivalent. In this appendix, we establish the equivalence of the two approaches by showing that the resulting expressions coincide.
	
	We first introduce some notation. Recall that
	$\mathscr{B}(\alpha)=\bs{a}^\top(\alpha\bs{I}-\bs{S})^{-1}\bs{s}$.
	Let
	\[
	p(\alpha):=\det(\alpha\bs{I}-\bs{S}),
	\qquad
	q_j(\alpha):=\det(\alpha\bs{I}-\bs{S})_{j,\bs{s}},
	\quad j\in[d],
	\]
	where $(\alpha\bs{I}-\bs{S})_{j,\bs{s}}$ is obtained by replacing the
	$j$th column of $\alpha\bs{I}-\bs{S}$ by $\bs{s}$. Thus, $p$ is a monic
	polynomial of degree $d$, while each $q_j$ has degree at most $d-1$.
	Cramer's rule gives
	\begin{equation}\label{eq: LSTs phase-type in polynomials}
		\mathscr{B}(\alpha)
		= \frac{\bs{a}^\top\bs{q}(\alpha)}{p(\alpha)},
		\qquad
		\mathscr{B}_j(\alpha)
		= \frac{q_j(\alpha)}{p(\alpha)}.
	\end{equation}
	
	Similarly,
	\begin{equation*}
		\varphi(\alpha)-\beta
		= -r\alpha-\lambda(1-\mathscr{B}(\alpha))-\beta
		= \frac{\tilde q(\alpha)}{p(\alpha)},
	\end{equation*}
	where
	\begin{equation}\label{eq: def tilde q}
		\tilde q(\alpha)
		:=(-r\alpha-\lambda-\beta)p(\alpha)
		+\lambda\sum_{j=1}^d a_jq_j(\alpha).
	\end{equation}
	Since $\tilde q$ has degree $d+1$ with leading coefficient $-r$, and
	$\alpha_1,\ldots,\alpha_{d+1}$ are the roots of the CL-equation,
	\begin{equation}\label{eq: tilde q as prod of roots}
		\tilde q(\alpha)
		=(-r)\prod_{k=1}^{d+1}(\alpha-\alpha_k).
	\end{equation}
	Hence,
	\begin{equation}\label{eq: partial frac decomp p / tilde q}
		\frac{1}{\varphi(\alpha)-\beta}
		=\frac{p(\alpha)}{\tilde q(\alpha)}
		=\sum_{k=1}^{d+1}\frac{c_k}{\alpha-\alpha_k},
	\end{equation}
	with
	\begin{equation}\label{eq: def c_k}
		c_k
		:=\frac{p(\alpha_k)}
		{(-r)\prod_{\ell\neq k}^{d+1}(\alpha_k-\alpha_\ell)}.
	\end{equation}
	
	We first prove the invertibility of $\bs{B}$ and then establish the
	equivalence of the two methods.
	\subsection{Invertibility of \texorpdfstring{$\bs{B}$}{}}\label{app: invertibility matrices}
	Recall that $\bs{B}$ is defined as
	\begin{equation*}
		\bs{B}_{kj} = \mathscr{B}_j(\alpha_k) = \frac{q_j(\alpha_k)}{p(\alpha_k)}. 
	\end{equation*} 
	\begin{lemma}\label{lemma:B invertible}
		If $\alpha_1,\ldots,\alpha_d$ are distinct and none is an eigenvalue of
		$\bs S$, then $\bs B$ is nonsingular.
	\end{lemma}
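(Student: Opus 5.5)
The plan is to argue by contradiction: a nontrivial kernel vector of $\bs B$ will be shown to force an eigenvalue of $\bs S$ to be a root of the Cram\'er--Lundberg polynomial $\tilde q$, which the hypotheses exclude.

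\emph{Step 1: reduce to a polynomial identity.} Suppose $\bs B\bs c=\bs 0$ for some $\bs c\in\mathbb C^d$. By \eqref{eq: LSTs phase-type in polynomials},
\[
0=\sum_{j}c_j\mathscr B_j(\alpha_k)=\frac{\sum_j c_j q_j(\alpha_k)}{p(\alpha_k)}, \qquad k=1,\ldots,d.
\]
Here $p(\alpha_k)\neq0$ because $\alpha_k$ is not an eigenvalue of $\bs S$. Hence the polynomial $r(\alpha):=\sum_j c_jq_j(\alpha)$, which has degree at most $d-1$, vanishes at the $d$ distinct points $\alpha_1,\ldots,\alpha_d$. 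Therefore $r\equiv0$, that is,
\[
\bs c^\top(\alpha\bs I-\bs S)^{-1}\bs s=0\quad\text{for all }\alpha .
\]
Expanding at $\alpha=\infty$ as $(\alpha\bs I-\bs S)^{-1}\bs s=\sum_{n\geqslant0}\alpha^{-n-1}\bs S^n\bs s$, this gives $\bs c^\top\bs S^n\bs s=0$ for all $n$. So if $\bs c\neq\bs 0$, the Krylov space $\mathcal K:=\mathrm{span}\{\bs S^n\bs s\}$ is a proper $\bs S$-invariant subspace, i.e.\ the pair $(\bs S,\bs s)$ is not controllable.

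\emph{Step 2: an uncontrollable mode cancels.} This is the step I expect to be the main obstacle, since it has to use the hypotheses in the right way. Note first that non-controllability is genuinely possible: two identical exponential phases give a non-controllable pair. The hypotheses must therefore exclude exactly such cases. I would use the Popov--Belevitch--Hautus test: non-controllability gives a left eigenvector $\bs w\neq\bs0$ with $\bs w^\top\bs S=\mu\bs w^\top$ and $\bs w^\top\bs s=0$, where $\mu$ is an eigenvalue of $\bs S$.

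Next, choose a basis adapted to $\mathcal K$. In this basis $\bs S$ is block upper triangular with blocks $\bs S_c$ (acting on $\mathcal K$) and $\bs S_u$, and $\bs s$ lies in $\mathcal K$. Then $(\alpha\bs I-\bs S)^{-1}\bs s=(\alpha\bs I-\bs S_c)^{-1}\bs s_c$ (embedded in $\mathcal K$), whose entries have denominator $p_c(\alpha)=\det(\alpha\bs I-\bs S_c)$. Since $p=p_c\,p_u$ with $p_u(\mu)=0$ (because $\mu$ is an eigenvalue of $\bs S_u$), every $q_j(\alpha)=p(\alpha)\,\bs e_j^\top(\alpha\bs I-\bs S)^{-1}\bs s$ is divisible by $p_u$. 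In particular $q_j(\mu)=0$ for all $j$, and $p(\mu)=0$.

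\emph{Step 3: contradiction.} By \eqref{eq: def tilde q},
\[
\tilde q(\mu)=(-r\mu-\lambda-\beta)\,p(\mu)+\lambda\sum_j a_jq_j(\mu)=0 .
\]
So $\mu$ is one of $\alpha_1,\ldots,\alpha_{d+1}$. It cannot be $\alpha_{d+1}=\psi(\beta)$: that root has positive real part, while the eigenvalues of a phase-type subgenerator have negative real part. So $\mu\in\{\alpha_1,\ldots,\alpha_d\}$, which contradicts the assumption that no $\alpha_k$ is an eigenvalue of $\bs S$. Hence $\bs c=\bs 0$ and $\bs B$ is nonsingular.
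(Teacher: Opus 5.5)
Your proof is correct, and it takes a different route from the paper at the decisive step. Step~1 matches the paper: the polynomial $\sum_j c_j q_j$ has degree at most $d-1$ and vanishes at $d$ distinct points, so $\bs c^\top\bs S^n\bs s=0$ for all $n\geqslant0$. The paper then finishes by appealing to minimality of the phase-type representation, citing that $\bs s,\bs S\bs s,\ldots,\bs S^{d-1}\bs s$ span $\mathbb R^d$. Controllability of $(\bs S,\bs s)$ is thus brought in as an outside assumption that is not among the lemma's hypotheses. (It is guaranteed for minimal matrix-exponential realizations, but is less obvious for minimal phase-type ones.)

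You instead derive controllability from the stated hypotheses. An uncontrollable mode $\mu$ is, via the Kalman decomposition, a common zero of $p$ and every $q_j$. It is therefore a zero of $\tilde q$, and hence equal to some $\alpha_k$. The case $k=d+1$ is excluded because $\Re\mu<0<\psi(\beta)$, and $k\leqslant d$ contradicts the assumption that no $\alpha_k$ is an eigenvalue of $\bs S$.

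The paper's argument is shorter but rests on a structural fact imported from outside. Yours is self-contained, and it shows as a by-product that the lemma's hypothesis already forces controllability, so no separate minimality assumption is needed. That hypothesis is in any case required for $\bs B_{kj}=\bs e_j^\top(\alpha_k\bs I-\bs S)^{-1}\bs s$ to be defined.

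The only convention you rely on is that $\alpha_1,\ldots,\alpha_{d+1}$ are the roots of the degree-$(d+1)$ polynomial $\tilde q$, with no cancellation against $p$. This is exactly how the paper uses them in \eqref{eq: tilde q as prod of roots}. As a minor point, your symbols $\bs c$ and $r(\alpha)$ clash with the paper's partial-fraction constants and drift, so renaming them would avoid confusion.
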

	\begin{proof}
		Suppose that $\bs{Bt}=0$ for some $\bs{t}\in\mathbb{C}^d$. This implies that
		\begin{equation*}
			0 = \sum_{j=1}^d t_j\mathscr{B}_j(\alpha_k) = \sum_{j=1}^d t_j\frac{q_j(\alpha_k)}{p(\alpha_k)} = \frac{\bs{t}^{\top}\bs{q}(\alpha_k)}{p(\alpha_k)}.
		\end{equation*}
		Because none of the $\alpha_k$ is an eigenvalue of $\bs{S}$, $p(\alpha_k) \neq 0$. Hence $\bs{t}^{\top}\bs{q}(\alpha_k) = 0$ for all $k\in[d]$. 
		
		Recall that each $q_j(\alpha)$ is at most of degree $d-1$, hence $\bs{t}^\top\bs{q}(\alpha)$ is at most of degree $d-1$. Since the $\alpha_k$ are distinct we thus have $\bs{t}^\top\bs{q}(\alpha)\equiv0$.
		
		It follows that $(\bs{t}^\top\bs{q}(\alpha)) / p(\alpha) = 0$
		for all $\alpha$ that are not an eigenvalue of $\bs{S}$. Since
		\[
		\frac{\bs{q}(\alpha)}{p(\alpha)} = \left(\alpha\bs{I}-\bs{S}\right)^{-1}\bs{s}
		= \sum_{m=0}^{\infty}\frac{\bs{S}^m\bs{s}}{\alpha^{m+1}}
		\]
		for sufficiently large $\alpha$, we obtain
		\[
		\bs{t}^\top\bs{S}^m\bs{s}=0
		\quad \text{for all} \quad m\geqslant0.
		\]
		For a minimal phase-type representation, the vectors
		$\bs{s},\bs{S}\bs{s},\ldots,\bs{S}^{d-1}\bs{s}$ span $\mathbb{R}^d$, see e.g. \cite[Theorem 4.2.16]{BN}, so we must have $\bs{t}=0$. Thus $\bs{B}$ is nonsingular.
	\end{proof}
	
	\subsection{Verification that \texorpdfstring{\eqref{eq: eta scale functions}}{} and \texorpdfstring{\eqref{eq: eta martingales}}{} are equal}\label{app: equal etas}
	
	We show that \eqref{eq: eta scale functions} and
	\eqref{eq: eta martingales} yield the same rational function of $\alpha$.
	Using \eqref{eq: tilde q as prod of roots}, \eqref{eq: eta scale functions}
	becomes
	\begin{equation*}
		\eta(u_+,\alpha,\beta)
		= \frac{1}{p(\alpha)}
		\sum_{k=1}^{d} c_k
		\left(
		\frac{\tilde q(\alpha)}{\alpha-\alpha_k}
		-\frac{\tilde q(\alpha)}{\alpha-\alpha_{d+1}}
		\right)e^{\alpha_k u_+},
	\end{equation*}
	and hence
	\begin{equation}\label{eq: eta Laplace inverse in theta}
		\eta(u_+,\alpha,\beta)
		= \frac{\bs{\mathfrak C}(\alpha)^\top
			\bs{\theta}(u_+,\beta)}{p(\alpha)},
	\end{equation}
	where
	\begin{align*}
		\mathfrak C_k(\alpha)
		&:=c_k\left(
		\frac{\tilde q(\alpha)}{\alpha-\alpha_k}
		-\frac{\tilde q(\alpha)}{\alpha-\alpha_{d+1}}
		\right)=-rc_k(\alpha_k-\alpha_{d+1})
		\prod_{\substack{\ell=1\\\ell\neq k}}^d
		(\alpha-\alpha_\ell).
	\end{align*}
	
	On the other hand, by the martingale representation
	\eqref{eq: eta martingales} and the invertibility of $\bs B$
	(Lemma~\ref{lemma:B invertible}),
	\begin{equation}\label{eq: eta martingale in theta}
		\eta(u_+,\alpha,\beta)
		=\bs{\mathfrak p}(u_+,\beta)^\top
		(\alpha\bs I-\bs S)^{-1}\bs s
		=\frac{\bs{\mathfrak p}(u_+,\beta)^\top\bs q(\alpha)}
		{p(\alpha)}
		=\frac{\bs q(\alpha)^\top\bs B^{-1}
			\bs\theta(u_+,\beta)}{p(\alpha)}.
	\end{equation}
	The two representations \eqref{eq: eta Laplace inverse in theta} and
	\eqref{eq: eta martingale in theta} therefore have the same denominator
	$p(\alpha)$. Hence, to show that they coincide, it suffices to establish the following identity.
	
	\begin{lemma}\label{lemma: q vs Bc}
		For $\beta > 0$, 
		\begin{equation}\label{eq: q vs Bc}
			\bs{q}(\alpha) = \bs{B}^\top\bs{\mathfrak{C}}(\alpha).
		\end{equation}
	\end{lemma}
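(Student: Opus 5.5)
The identity \eqref{eq: q vs Bc} can be checked componentwise as an equality of polynomials in $\alpha$, using Lagrange interpolation at the $d$ roots $\alpha_1,\ldots,\alpha_d$.

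Written out in the $j$th component, \eqref{eq: q vs Bc} reads
\[
q_j(\alpha)=\sum_{k=1}^d \bs B_{kj}\,\mathfrak C_k(\alpha)=\sum_{k=1}^d \frac{q_j(\alpha_k)}{p(\alpha_k)}\,\mathfrak C_k(\alpha),\qquad j\in[d].
\]
Both sides are polynomials in $\alpha$ of degree at most $d-1$.
\begin{itemize}
\item The left side has degree at most $d-1$, as noted before Lemma~\ref{lemma:B invertible}.
\item On the right, each $\mathfrak C_k(\alpha)=-rc_k(\alpha_k-\alpha_{d+1})\prod_{\ell\neq k,\,\ell\leqslant d}(\alpha-\alpha_\ell)$ has degree $d-1$.
\end{itemize}
Under Assumption~\ref{assump: distinct roots}, $\alpha_1,\ldots,\alpha_d$ are distinct. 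It therefore suffices to verify the identity at $\alpha=\alpha_m$ for each $m\in[d]$.

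We also need $p(\alpha_k)\neq0$. This holds because the $\alpha_k$ are roots of $\varphi(\alpha)-\beta=\tilde q(\alpha)/p(\alpha)$ and are not poles of this rational function. This is the same standing condition used in Lemma~\ref{lemma:B invertible}.

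Fix $m\in[d]$ and evaluate at $\alpha=\alpha_m$. For $k\neq m$, the product defining $\mathfrak C_k(\alpha_m)$ contains the factor $\alpha_m-\alpha_m=0$, so $\mathfrak C_k(\alpha_m)=0$. The right side therefore collapses to
\[
\frac{q_j(\alpha_m)}{p(\alpha_m)}\,\mathfrak C_m(\alpha_m).
\]
It remains to show $\mathfrak C_m(\alpha_m)=p(\alpha_m)$. Absorbing the factor $(\alpha_m-\alpha_{d+1})$ into the product gives
\[
\mathfrak C_m(\alpha_m)=(-r)\,c_m\prod_{\substack{\ell=1\\ \ell\neq m}}^{d+1}(\alpha_m-\alpha_\ell).
\]
By the definition \eqref{eq: def c_k} of $c_m$, this equals $p(\alpha_m)$. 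Hence the right side equals $q_j(\alpha_m)$ for every $m\in[d]$.

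Two polynomials of degree at most $d-1$ that agree at $d$ distinct points are identical, which proves \eqref{eq: q vs Bc}. There is no real obstacle in this argument. The only points needing care are the degree count for $\mathfrak C_k$, which is immediate from its closed product form, and the nonvanishing of $p(\alpha_k)$, which is guaranteed by the assumptions already in force.
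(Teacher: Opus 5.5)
Your proof is correct and follows the paper's argument essentially step for step: each component of both sides is a polynomial of degree at most $d-1$, the two are compared at the $d$ distinct points $\alpha_1,\ldots,\alpha_d$, only the $k=m$ term survives, and the definition \eqref{eq: def c_k} of $c_m$ turns $\mathfrak C_m(\alpha_m)$ into $p(\alpha_m)$. The paper leaves $p(\alpha_k)\neq0$ implicit, and your justification (that the roots are not poles) would need care if $\tilde q$ and $p$ shared a common factor; but the entries $q_j(\alpha_k)/p(\alpha_k)$ of $\bs B$ are only defined when $p(\alpha_k)\neq0$, so treating it as a standing condition, as you also do, is enough.
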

	
	\begin{proof}
		We will show that the entries of the two sides of
		\eqref{eq: q vs Bc} are the same polynomials. To this end, let
		$j\in[d]$ be given. For the corresponding entry on the right-hand
		side of \eqref{eq: q vs Bc} we have 
		\begin{equation}\label{eq: q_j = Bc}
			\sum_{k=1}^{d}\bs{B}_{jk}^\top\mathfrak{C}_k(\alpha) = \sum_{k=1}^{d} \frac{q_j(\alpha_k)}{p(\alpha_k)} \cdot (-r)c_k (\alpha_k-\alpha_{d+1}) \prod_{\ell=1, \ell\neq k}^{d} (\alpha - \alpha_\ell).
		\end{equation}
		First note that \eqref{eq: q_j = Bc} is a polynomial in $\alpha$ of
		degree at most $d-1$. We will show that this polynomial agrees with
		$q_j(\alpha)$ at the $d$ points $\alpha_1,\ldots,\alpha_d$. Let
		$n\in[d]$. Evaluating \eqref{eq: q_j = Bc} at $\alpha=\alpha_n$ gives
		\begin{equation}\label{eq: rho_n in}
			\sum_{k=1}^{d} \frac{q_j(\alpha_k)}{p(\alpha_k)} \cdot (-r)c_k (\alpha_k-\alpha_{d+1}) \prod_{\ell=1, \ell\neq k}^{d} (\alpha_n - \alpha_\ell).
		\end{equation}
		For every $k\in[d]$, except $k=n$, the term within the summation is
		zero, so \eqref{eq: rho_n in} reduces to
		\begin{equation}\label{eq: rho_n in 2}
			\frac{q_j(\alpha_n)}{p(\alpha_n)} \cdot (-r)c_n
			\prod_{\ell=1, \ell\neq n}^{d+1} (\alpha_n - \alpha_\ell)
			= q_j(\alpha_n),
		\end{equation}
		by \eqref{eq: def c_k}. Hence the two polynomials agree at
		$\alpha_1,\ldots,\alpha_d$ and are therefore identical. Since this
		holds for all $j\in[d]$, \eqref{eq: q vs Bc} follows.
	\end{proof}

	\subsection{Verification that \texorpdfstring{\eqref{eq: eta tilde scale functions}}{} and \texorpdfstring{\eqref{eq: eta tilde martingales}}{} are equal}\label{app: equal etas tilde}We now verify that the scale-function and martingale representations of
	$\tilde\eta(u_-,u_+,\alpha,\beta)$ coincide.

	Using \eqref{eq: eta Laplace inverse in theta} and the decomposition from
	Lemma~\ref{lemma: eta_tilde}, the expression obtained from the
	scale-function method can be written as
	\begin{equation*}
		\tilde\eta(u_-, u_+, \alpha, \beta)
		= \frac{1}{p(\alpha)}\bs{\mathfrak{C}}(\alpha)^\top
		\left(\bs{\theta}(u_+)
		- \delta_-(u_-,u_+,\beta)\bs{\theta}(u_-+u_+)\right),
	\end{equation*}
	with $\delta_-(u_-,u_+,\beta)$ as given in
	\eqref{eq: delta_min scale func}. By Lemma~\ref{lemma: q vs Bc}, this
	becomes
	\begin{equation}\label{eq: tilde eta scale functions in theta}
		\tilde\eta(u_-, u_+, \alpha, \beta)
		= \frac{1}{p(\alpha)}\bs{q}(\alpha)^\top\bs{B}^{-1}
		\left(\bs{\theta}(u_+)
		- \delta_-(u_-,u_+,\beta)\bs{\theta}(u_-+u_+)\right).
	\end{equation}
	
	Conversely, with
	$\bs{M} \equiv \bs{M}(u_-,u_+,\beta)
	:= e^{\alpha_{d+1}(u_-+u_+)}\bs{B}-\tilde{\bs{B}}$,
	the martingale method gives
	\begin{align}
		\notag\tilde\eta(u_-, u_+, \alpha, \beta)
		&= \bs{\mathfrak{d}}(u_-, u_+)^\top
		\frac{\bs{q}(\alpha)}{p(\alpha)} \\
		&= \frac{1}{p(\alpha)}\bs{q}(\alpha)^\top\bs{M}^{-1}
		\left(e^{\alpha_{d+1}(u_-+u_+)}\bs{\theta}(u_+)
		- e^{\alpha_{d+1}u_+}\bs{\theta}(u_-+u_+)\right).
		\label{eq: tilde eta martingales in theta}
	\end{align}
	
	Thus, it remains to show that the expressions in
	\eqref{eq: tilde eta scale functions in theta} and
	\eqref{eq: tilde eta martingales in theta} coincide. For this, we first
	need the following identity, where $\bs{c}:=(c_1,\ldots,c_d)$.
	
	\begin{lemma}\label{lemma: relation B vs B_j}
		For $\beta>0$,
		\begin{equation}\label{eq: relation B vs B_j}
			\bs{B}^\top\bs{c}
			= -c_{d+1}\bs{\mathscr{B}}(\alpha_{d+1}).
		\end{equation}
	\end{lemma}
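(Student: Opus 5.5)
The plan is to verify \eqref{eq: relation B vs B_j} entrywise, reducing it to a vanishing sum of residues, equivalently a vanishing divided difference. Fix $j\in[d]$. The $j$th entry of the left-hand side is $\sum_{k=1}^d \bs B_{kj}c_k=\sum_{k=1}^d c_k\,q_j(\alpha_k)/p(\alpha_k)$, using \eqref{eq: LSTs phase-type in polynomials}. The $j$th entry of the right-hand side is $-c_{d+1}q_j(\alpha_{d+1})/p(\alpha_{d+1})$. Hence the claim is equivalent to
\[
\sum_{k=1}^{d+1} c_k\,\frac{q_j(\alpha_k)}{p(\alpha_k)}=0 .
\]

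Here we use that no $\alpha_k$ is an eigenvalue of $\bs S$. Such a root would be a pole of $\mathscr B$, hence of $\varphi$, and so could not solve $\varphi(\alpha)=\beta$. Consequently $p(\alpha_k)\neq0$, and the same fact already underlies the use of $\bs B$ in Lemma~\ref{lemma:B invertible}.

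Next we substitute the explicit form \eqref{eq: def c_k} of $c_k$. The factor $p(\alpha_k)$ cancels and the condition becomes
\[
\frac{1}{-r}\sum_{k=1}^{d+1}\frac{q_j(\alpha_k)}{\prod_{\ell\neq k}^{d+1}(\alpha_k-\alpha_\ell)}=0 .
\]
The sum is the $d$th-order divided difference of $q_j$ at the $d+1$ distinct nodes $\alpha_1,\ldots,\alpha_{d+1}$; the nodes are distinct by Assumption~\ref{assump: distinct roots}. It is therefore the coefficient of $\alpha^d$ in the Lagrange interpolation polynomial of $q_j$ at these nodes. That interpolant is $q_j$ itself, which has degree at most $d-1$, so the coefficient vanishes.

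An equivalent route is via residues. By \eqref{eq: tilde q as prod of roots}, the sum above equals the sum of the residues of the rational function $q_j(\alpha)/\tilde q(\alpha)$. Since $\deg q_j\leqslant d-1$ and $\deg\tilde q=d+1$, this function is $O(\alpha^{-2})$ at infinity, so its residue at infinity is zero and the finite residues sum to zero.

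The only delicate point is justifying $p(\alpha_k)\neq0$. If it failed, the cancellation of $p(\alpha_k)$ would be invalid and $\bs B$ would not be defined. I would settle it using the relation $\varphi(\alpha)-\beta=\tilde q(\alpha)/p(\alpha)$. If the representation $\mathscr B=\bs a^\top\bs q/p$ were not in lowest terms, a common factor of $p$ and $\tilde q$ could create spurious roots. I would exclude this as in Lemma~\ref{lemma:B invertible}, by appealing to a minimal representation, for which $\varphi-\beta$ has exactly the $d+1$ roots of Lemma~\ref{lemma: roots phi = beta}. The rest is routine.
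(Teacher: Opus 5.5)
Your proof is correct, and it reaches the identity by a different route from the paper. Both arguments reduce the claim to $\sum_{k=1}^{d+1}c_k\mathscr B_j(\alpha_k)=0$.

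The paper never uses the explicit formula \eqref{eq: def c_k}. It expands $\mathscr B_j=q_j/p$ in partial fractions over the eigenvalues $\pi_i^*$ of $\bs S$, with multiplicities $d_i^*$. It then observes that $p/\tilde q$ and its first $d_i^*-1$ derivatives vanish at each $\pi_i^*$. From $p/\tilde q=\sum_k c_k/(\alpha-\alpha_k)$ it reads off $\sum_k c_k(\alpha_k-\pi_i^*)^{-\ell}=0$ for $\ell=1,\ldots,d_i^*$, and this kills every term of the expansion.

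You instead substitute $c_k=p(\alpha_k)/\tilde q'(\alpha_k)$ and cancel $p(\alpha_k)$. What remains is a $d$th divided difference of $q_j$, equivalently the sum of the finite residues of $q_j/\tilde q$. It vanishes purely because $\deg q_j\leqslant d-1<d$.

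Your version is shorter. It avoids the bookkeeping for repeated eigenvalues of $\bs S$ and the derivative identities altogether. The paper's argument works on the pole side of $\mathscr B_j$ and shows that $g\mapsto\sum_k c_k g(\alpha_k)$ annihilates every proper rational function whose poles lie among the eigenvalues of $\bs S$. Your argument proves the same general fact, since every such function has the form $r/p$ with $\deg r\leqslant d-1$.

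Both proofs need $p(\alpha_k)\neq0$. Without it $\bs B$ is not even defined, and in the paper $p/\tilde q$ must be regular at each $\pi_i^*$. You are right to flag this, but your justification needs a small correction. Minimality of the phase-type representation does not by itself make $\bs a^\top\bs q$ and $p$ coprime, because the minimal phase-type order can exceed the degree of the LST. It is cleaner to argue as follows. The $\alpha_k$ are $d+1$ genuine roots of $\varphi(\alpha)=\beta$, as asserted in Lemma~\ref{lemma: roots phi = beta}. Any common factor of $\tilde q$ and $p$ would leave $\varphi-\beta$ with a numerator of degree at most $d$ in lowest terms, hence at most $d$ roots. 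So no root of $\tilde q$ can be a zero of $p$.
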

	
	\begin{proof}
		Let $\pi^*_1, \dots, \pi^*_{n^*}$ denote the roots of $p(\alpha)$ and
		let $d^*_1, \dots,d^*_{n^*}$ denote their multiplicities respectively.
		Then, for some constants $c^*_{i\ell j}$, we write
		$\mathscr{B}_j(\alpha)$ in terms of its partial fraction decomposition
		as follows:
		\begin{equation}\label{eq: partial frac decomp B_j(alpha)}
			\mathscr{B}_j(\alpha) = \frac{q_j(\alpha)}{p(\alpha)}
			= \sum_{i=1}^{n^*} \sum_{\ell = 1}^{d^*_i}
			\frac{c^*_{i\ell j}}{(\alpha - \pi^*_i)^\ell}.
		\end{equation}
		Furthermore, since the leading coefficient of $p(\alpha)$ is $1$, we
		can write
		\begin{equation*}
			p(\alpha) = \prod_{i=1}^{n^*}(\alpha - \pi^*_i)^{d^*_i},
		\end{equation*}
		from which it follows that, for $i=1,\dots,n^*$,
		\begin{equation}\label{eq: all deriv p zer0}
			p(\pi^*_i) = p^{(1)}(\pi_i^*) = \dots =
			p^{(d_i-1)}(\pi^*_i) = 0.
		\end{equation}
		Hence, for each $i=1,\dots,n^*$, the first $d_i^*-1$ derivatives of
		$p(\alpha)/\tilde{q}(\alpha)$ also vanish at $\alpha=\pi_i^*$. Using
		the partial fraction decomposition in
		\eqref{eq: partial frac decomp p / tilde q}, we obtain, for
		$\ell=0,\dots,d_i^*-1$,
		\begin{equation}\label{eq: all deriv p / tilde q zero}
			0 =
			\frac{\partial^\ell}{\partial \alpha^\ell}
			\frac{p(\alpha)}{\tilde{q}(\alpha)}
			\Bigg|_{\alpha = \pi^*_i}
			= \sum_{k=1}^{d+1}
			(-1)^\ell \ell!
			\frac{c_k}{(\pi^*_i - \alpha_k)^{\ell+1}}.
		\end{equation}
		Finally, using \eqref{eq: partial frac decomp B_j(alpha)}, we have
		\begin{align*}
			\sum_{k=1}^{d+1}c_k \mathscr{B}_j(\alpha_k)
			&= \sum_{k=1}^{d+1}c_k
			\sum_{i=1}^{n^*}\sum_{\ell=1}^{d^*_i}
			\frac{c^*_{i\ell j}}{(\alpha_k - \pi^*_i)^\ell} \\
			&= \sum_{i=1}^{n^*}\sum_{\ell=1}^{d^*_i}
			\frac{c^*_{i\ell j}}{(-1)^{\ell-1}(\ell-1)!}
			\sum_{k=1}^{d+1}
			(-1)^{\ell-1}(\ell-1)!
			\frac{c_k}{(\alpha_k - \pi^*_i)^\ell}
			= 0,
		\end{align*}
		by \eqref{eq: all deriv p / tilde q zero}. Thus, for all $j\in[d]$,
		\begin{equation*}
			\sum_{k=1}^{d}c_k\mathscr{B}_j(\alpha_k)
			= -c_{d+1}\mathscr{B}_j(\alpha_{d+1}),
		\end{equation*}
		which is equivalent to \eqref{eq: relation B vs B_j}.
	\end{proof}
	
	The preceding identity allows us to establish the required equality
	between \eqref{eq: tilde eta scale functions in theta} and
	\eqref{eq: tilde eta martingales in theta}. More precisely, it implies
	the following lemma.
	
	\begin{lemma}\label{lemma: relation M B_inv}
		For $u_-,u_+\geqslant0$ with $u_-+u_+ >0$, and $\beta > 0$,
		\begin{equation}\label{eq: MB_inv}
			\bs{M}\bs{B}^{-1}
			\left(\bs{\theta}(u_+)
			- \delta_-(u_-,u_+,\beta)\bs{\theta}(u_-+u_+)\right)
			= e^{\alpha_{d+1}(u_-+u_+)}\bs{\theta}(u_+)
			- e^{\alpha_{d+1}u_+}\bs{\theta}(u_-+u_+).
		\end{equation}
	\end{lemma}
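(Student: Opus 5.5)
The plan is to expand $\bs M\bs B^{-1}$ directly and reduce \eqref{eq: MB_inv} to a single scalar identity. That identity follows from Lemma~\ref{lemma: relation B vs B_j} together with the scale-function form \eqref{eq: delta_min scale func} of $\delta_-$.

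Write $U:=u_-+u_+$, $a:=\alpha_{d+1}$, and $\bs x:=\bs\theta(u_+)-\delta_-(u_-,u_+,\beta)\bs\theta(U)$, where $\bs\theta(y)=(e^{\alpha_1y},\ldots,e^{\alpha_dy})^\top$. By definition, $\bs M=e^{aU}\bs B-\bs\theta(U)\bs b_*^\top$, so
\[
\bs M\bs B^{-1}\bs x
= e^{aU}\bs\theta(u_+)-e^{aU}\delta_-\,\bs\theta(U)-\bigl(\bs b_*^\top\bs B^{-1}\bs x\bigr)\bs\theta(U).
\]
Comparing with the right-hand side of \eqref{eq: MB_inv}, it suffices to prove the scalar identity
\[
e^{aU}\delta_-(u_-,u_+,\beta)+\bs b_*^\top\bs B^{-1}\bs x=e^{au_+}.
\]

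To evaluate $\bs b_*^\top\bs B^{-1}$, I transpose Lemma~\ref{lemma: relation B vs B_j}, which gives $\bs c^\top\bs B=-c_{d+1}\bs b_*^\top$. Hence $\bs b_*^\top\bs B^{-1}=-\bs c^\top/c_{d+1}$, using that $\bs B$ is invertible by Lemma~\ref{lemma:B invertible}. This step also needs $c_{d+1}\neq0$. By \eqref{eq: def c_k}, this holds because $p(\alpha_{d+1})\neq0$: $\alpha_{d+1}=\psi(\beta)>0$, while the eigenvalues of the subgenerator $\bs S$ have negative real parts. Consequently
\[
\bs b_*^\top\bs B^{-1}\bs x=-\frac{1}{c_{d+1}}\Bigl(\sum_{k=1}^d c_ke^{\alpha_ku_+}-\delta_-\sum_{k=1}^d c_ke^{\alpha_kU}\Bigr).
\]

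Next I use \eqref{eq: W_y phase type} in the form $\sum_{k=1}^d c_ke^{\alpha_ky}=W^{(\beta)}(y)-c_{d+1}e^{ay}$. I also use \eqref{eq: delta_min scale func}, namely $\delta_-=W^{(\beta)}(u_+)/W^{(\beta)}(U)$. The bracket then becomes
\[
W^{(\beta)}(u_+)-c_{d+1}e^{au_+}-\delta_-W^{(\beta)}(U)+\delta_-c_{d+1}e^{aU}=c_{d+1}\bigl(\delta_-e^{aU}-e^{au_+}\bigr),
\]
since the $W^{(\beta)}$ terms cancel exactly by the formula for $\delta_-$. Therefore $\bs b_*^\top\bs B^{-1}\bs x=e^{au_+}-\delta_-e^{aU}$, and adding $e^{aU}\delta_-$ gives $e^{au_+}$, which is the required scalar identity.

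The only delicate point is the conversion $\bs b_*^\top\bs B^{-1}=-\bs c^\top/c_{d+1}$. It relies on Lemma~\ref{lemma: relation B vs B_j} and on checking $c_{d+1}\neq 0$ and the invertibility of $\bs B$ (under Assumption~\ref{assump: distinct roots} and minimality of the representation). Everything else is direct substitution.
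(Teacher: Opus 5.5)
Your proposal is correct and follows essentially the same route as the paper. Both arguments write $\bs M\bs B^{-1}=e^{\alpha_{d+1}(u_-+u_+)}\bs I-\bs\theta(u_-+u_+)\,\bs b_*^\top\bs B^{-1}$, use Lemma~\ref{lemma: relation B vs B_j} to replace $\bs b_*^\top\bs B^{-1}$ by $-\bs c^\top/c_{d+1}$, and cancel the $W^{(\beta)}$ terms via \eqref{eq: delta_min scale func}; your reduction to one scalar identity is only a tidier packaging, and your check that $c_{d+1}\neq0$ (because $p(\alpha_{d+1})\neq0$) fills in a point the paper leaves implicit.
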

	
	\begin{proof}
		Starting with
		$\bs{M}\bs{B}^{-1}
		=e^{\alpha_{d+1}(u_-+u_+)}\bs{I}
		-\tilde{\bs{B}}\bs{B}^{-1}$,
		we obtain for the left-hand side of \eqref{eq: MB_inv}
		\begin{align}
			\notag e^{\alpha_{d+1}(u_-+u_+)}
			&\bs{\theta}(u_+)
			-e^{\alpha_{d+1}(u_-+u_+)}
			\delta_-(u_-,u_+,\beta)\bs{\theta}(u_-+u_+) \\
			&-\tilde{\bs{B}}\bs{B}^{-1}\bs{\theta}(u_+)
			+\delta_-(u_-,u_+,\beta)
			\tilde{\bs{B}}\bs{B}^{-1}\bs{\theta}(u_-+u_+).
			\label{eq: temp MB_inv}
		\end{align}
		Furthermore, by Lemma~\ref{lemma: relation B vs B_j},
		\begin{equation*}
			\tilde{\bs{B}}\bs{B}^{-1}
			=\bs{\theta}(u_-+u_+)
			\left(\bs{\mathscr{B}}(\alpha_{d+1})^\top\bs{B}^{-1}\right)
			=-\frac{1}{c_{d+1}}
			\bs{\theta}(u_-+u_+)\bs{c}^\top.
		\end{equation*}
		Hence \eqref{eq: temp MB_inv} can be written as
		\begin{align}
			\notag e^{\alpha_{d+1}(u_-+u_+)}
			&\bs{\theta}(u_+)
			-e^{\alpha_{d+1}(u_-+u_+)}
			\delta_-(u_-,u_+,\beta)\bs{\theta}(u_-+u_+) \\
			&\:\:\:\:\:
			+\frac{1}{c_{d+1}}
			\left(\bs{c}^\top\bs{\theta}(u_+)\right)
			\bs{\theta}(u_-+u_+)
			-\frac{1}{c_{d+1}}
			\delta_-(u_-,u_+,\beta)
			\left(\bs{c}^\top\bs{\theta}(u_-+u_+)\right)
			\bs{\theta}(u_-+u_+).
		\end{align}
		This can be rewritten as
		\begin{align*}
			&e^{\alpha_{d+1}(u_-+u_+)}\bs{\theta}(u_+)
			-\frac{1}{c_{d+1}}
			\left(
			c_{d+1}e^{\alpha_{d+1}(u_-+u_+)}
			+\bs{c}^\top\bs{\theta}(u_-+u_+)
			\right)
			\delta_-(u_-,u_+,\beta)\bs{\theta}(u_-+u_+)\\
			&\quad
			+\frac{1}{c_{d+1}}
			\left(\bs{c}^\top\bs{\theta}(u_+)\right)
			\bs{\theta}(u_-+u_+).
		\end{align*}
		With the expression for $\delta_-(u_-,u_+,\beta)$ in
		\eqref{eq: delta_min scale func}, this simplifies to
		\begin{align*}
			&e^{\alpha_{d+1}(u_-+u_+)}\bs{\theta}(u_+)
			-\frac{1}{c_{d+1}}
			\left(\sum_{k=1}^{d+1}c_ke^{\alpha_k u_+}\right)
			\bs{\theta}(u_-+u_+)
			+\frac{1}{c_{d+1}}
			\left(\bs{c}^\top\bs{\theta}(u_+)\right)
			\bs{\theta}(u_-+u_+),
		\end{align*}
		which equals the right-hand side of \eqref{eq: MB_inv}.
	\end{proof}

	\section{Steady-state derivations}
	\subsection{Proof of Proposition~\ref{theorem: stationary LST workload}}\label{app: proof thrm stationary LST}
	Recall that we aim to derive
	\begin{equation*}
		\lim_{\beta\downarrow0} \chi(0,\alpha,\beta) = \lim_{\beta\downarrow0} \frac{(1-P_{22})b_1+P_{12}b_2}
		{\det(\bs{I}-\bs{P})}.
	\end{equation*}
	
	\subsubsection*{Numerator}
	Since $b_1$ and $b_2$ vanish at $\beta=0$, their first-order terms determine the first-order term of the numerator. Dividing \eqref{eq: def b_1} and \eqref{eq: def b_2} by $\beta$ and letting $\beta\downarrow0$, followed by basic simplifications, yields
	\begin{align*}
		\lim_{\beta\downarrow0}\frac{b_1(v,\alpha,\beta)}{\beta}
		&=
		\frac{1}{\lambda_1}
		-\frac{1}{\varphi_2(\alpha)}
		\mathbb{E}\left(e^{-\alpha B_1}\right)
		+\frac{e^{-\alpha v}}{\varphi_2(\alpha)}
		\\
		&\quad
		+\left(
		\frac{\varphi_1(\alpha)}{\varphi_2(\alpha)}-1
		\right)
		\int_0^v e^{-\alpha x}
		\int_0^{v-x}e^{-\alpha y}W_1^{(0)}(y)
		\,\mathrm{d}y\,
		\mathbb{P}(B_1\in\mathrm{d}x)
		\\
		&\quad
		+\frac{1-e^{-\alpha v}}{\varphi_2(\alpha)}
		\int_0^v
		\frac{W_1^{(0)}(v-x)}{W_1^{(0)}(v)}
		\mathbb{P}(B_1\in\mathrm{d}x)
		\\
		&\quad
		-\left(
		\frac{\varphi_1(\alpha)}{\varphi_2(\alpha)}-1
		\right)
		\int_0^v e^{-\alpha y}W_1^{(0)}(y)\,\mathrm{d}y
		\int_0^v
		\frac{W_1^{(0)}(v-x)}{W_1^{(0)}(v)}
		\mathbb{P}(B_1\in\mathrm{d}x),\\
		\lim_{\beta\downarrow0}\frac{b_2(v,\alpha,\beta)}{\beta}
		&=
		-\frac{e^{-\alpha v}}{\varphi_2(\alpha)}
		\frac{W_1^{(0)}(0)}{W_1^{(0)}(v)}
		\left(
		1-e^{\alpha v}
		\left(
		1-\varphi_1(\alpha)
		\int_0^v e^{-\alpha y}W_1^{(0)}(y)\,\mathrm{d}y
		\right)
		\right)
		\\
		&\quad
		+\frac{W_1^{(0)}(0)}{W_1^{(0)}(v)}
		\int_0^v e^{-\alpha y}W_1^{(0)}(y)\,\mathrm{d}y.
	\end{align*}
	At $\beta=0$, using \eqref{eq: def P_{12}} and \eqref{eq: def P_{2}},
	\begin{align*}
		P_{12}(v,0)
		&=
		1-
		\int_{[0,v]}
		\frac{W_1^{(0)}(v-x)}{W_1^{(0)}(v)}
		\mathbb{P}(B_1\in\mathrm{d}x),\quad\quad
		P_{22}(v,0)
		=
		1-
		\frac{W_1^{(0)}(0)}{W_1^{(0)}(v)}.
	\end{align*}
	Consequently,
	\begin{align}
		(1-P_{22})b_1+P_{12}b_2
		&=
		\frac{W_1^{(0)}(0)}{W_1^{(0)}(v)}
		\Bigg[
		\frac{1}{\lambda_1}
		+\frac{1-\mathscr{B}_1(\alpha)}{\varphi_2(\alpha)}
		\notag\\
		&\hspace{-1cm}
		+\left(
		1-\frac{\varphi_1(\alpha)}{\varphi_2(\alpha)}
		\right)
		\int_0^v e^{-\alpha y}W_1^{(0)}(y)
		\left(
		1-
		\int_0^{v-y}e^{-\alpha x}
		\mathbb{P}(B_1\in\mathrm{d}x)
		\right)\mathrm{d}y
		\Bigg]\beta
		+O(\beta^2).
		\label{eq: taylor exp numer}
	\end{align}
	
	\subsubsection*{Denominator}
	
	The denominator is analyzed along the same lines. Substituting the expressions for the entries of $\bs{P}$, as given in \eqref{eq: def P_{11}}, \eqref{eq: def P_{12}} and \eqref{eq: def P_{2}}, in $\det(\bs{I}-\bs{P})$ and simplifying gives
	\begin{align*}
		\det(\bs{I}-\bs{P})
		&=
		\frac{W_1^{(\beta)}(0)}{W_1^{(\beta)}(v)}
		e^{\psi_2(\beta)v}
		\left(
		1-
		(\varphi_1(\psi_2(\beta))-\beta)
		\int_0^v e^{-\psi_2(\beta)y}
		W_1^{(\beta)}(y)\,\mathrm{d}y
		\right)
		\\
		&\quad
		+\frac{\lambda_1}{\lambda_1+\beta}
		\frac{W_1^{(\beta)}(0)}{W_1^{(\beta)}(v)}
		\Bigg[
		(\varphi_1(\psi_2(\beta))-\beta)
		\int_0^v e^{\psi_2(\beta)(v-x)}
		\\
		&\qquad\qquad\qquad\qquad\qquad
		\times
		\int_0^{v-x}e^{-\psi_2(\beta)y}
		W_1^{(\beta)}(y)\,\mathrm{d}y\,
		\mathbb{P}(B_1\in\mathrm{d}x)
		-
		\mathbb{E}\left(
		e^{-\psi_2(\beta)(B_1-v)}
		\right)
		\Bigg].
	\end{align*}
	Differentiating with respect to $\beta$ at $\beta=0$ and using Assumption \ref{assump: stability}, we obtain that, up to $O(\beta^2)$-terms,
	\begin{align}
		\det(\bs{I}-\bs{P})
		&=
		\frac{W_1^{(0)}(0)}{W_1^{(0)}(v)}
		\Bigg[
		\frac{1}{\lambda_1}
		+\psi_2'(0)\mathbb{E}(B_1)
		+\left(
		1-\varphi_1'(0)\psi_2'(0)
		\right)
		\int_0^v
		W_1^{(0)}(y)
		\mathbb{P}(B_1>v-y)\,\mathrm{d}y
		\Bigg]\,\beta.
		\label{eq: taylor exp denom}
	\end{align}
	
	Since
	\begin{equation*}
		\frac{W_1^{(0)}(0)}{W_1^{(0)}(v)}
		=
		\delta_{1,-}(v,0,0)
		=
		\mathbb{P}(\sigma_2(v)<\tau_2(0))
		>0,
	\end{equation*}
	we may divide \eqref{eq: taylor exp numer} and \eqref{eq: taylor exp denom} by
	$\beta W_1^{(0)}(0)/W_1^{(0)}(v)$ and let $\beta\downarrow0$. We obtain
	\begin{equation*}
		\lim_{\beta\downarrow0}\chi(0,\alpha,\beta) = \frac{f(v,\alpha)}{f(v,0)}
	\end{equation*}
	for
	\begin{align*}
		f(v,\alpha) :=\,& \frac{1}{\lambda_1}
		+\frac{1-\mathscr{B}_1(\alpha)}{\varphi_2(\alpha)}
		+\left(
		1-\frac{\varphi_1(\alpha)}{\varphi_2(\alpha)}
		\right)
		\int_0^v e^{-\alpha y}W_1^{(0)}(y)
		\left(
		1-
		\int_0^{v-y}e^{-\alpha x}
		\mathbb{P}(B_1\in\mathrm{d}x)
		\right)\mathrm{d}y
	\end{align*}
	and
	\begin{equation*}
		f(v,0) = \lim_{\alpha\downarrow0}f(v,\alpha) = \frac{1}{\lambda_1}
		+\psi_2'(0)\mathbb{E}(B_1)
		+\left(
		1-\varphi_1'(0)\psi_2'(0)
		\right)
		\int_0^v
		W_1^{(0)}(y)
		\mathbb{P}(B_1>v-y)\,\mathrm{d}y,
	\end{equation*}
	thus proving Proposition~\ref{theorem: stationary LST workload}. \hfill $\square$
	
	\subsection{Proofs of \texorpdfstring{\eqref{eq: limit_integral_I}}{} and \texorpdfstring{\eqref{eq: limit_integral_II}}{}}\label{app: limit_integrals}
	For the integral in \eqref{eq: limit_integral_I}, the domain of integration is
	\[
	\{(x,y)\in\mathbb{R}^2:x,y\geqslant0,\ x+y\leqslant v\}.
	\]
	As $v\to\infty$, this domain increases to $\{(x,y)\in\mathbb{R}^2:x,y\geqslant0\}$. Hence,
	\begin{align*}
		\lim_{v\to\infty}
		&\int_0^v e^{-\alpha x}
		\int_0^{v-x}e^{-\alpha y}W_1^{(0)}(y)\,\mathrm{d}y\,
		\mathbb{P}(B_1\in\mathrm{d}x)\\
		&=
		\int_0^\infty e^{-\alpha x}
		\int_0^\infty e^{-\alpha y}W_1^{(0)}(y)\,\mathrm{d}y\,
		\mathbb{P}(B_1\in\mathrm{d}x)
		=
		\frac{\mathscr{B}_1(\alpha)}{\varphi_1(\alpha)}.
	\end{align*}
	
	For \eqref{eq: limit_integral_II} we use the result of the first equality. We rewrite the limit as follows:
	\begin{align*}
		\lim_{v\to\infty}\int_0^v&W_1^{(0)}(y)\,\mathbb{P}(B_1> v-y)\,\mathrm{d}y \\
		&= \lim_{v\to\infty} \left( \int_0^v W_1^{(0)}(y)\,\mathrm{d}y - \int_0^v W_1^{(0)}(y)\,\int_0^{v-y} \mathbb{P}(B_1\in\mathrm{d}x)\,\mathrm{d}y \right) \\
		&= \lim_{v\to\infty} \left( \int_0^v W_1^{(0)}(y)\,\mathrm{d}y - \int_0^v \,\int_0^{v-x} W_1^{(0)}(y) \mathrm{d}y\,\mathbb{P}(B_1\in\mathrm{d}x) \right) \\
		&= \lim_{v\to\infty} \lim_{\alpha\downarrow0} \left( \int_0^v e^{-\alpha y} W_1^{(0)}(y)\,\mathrm{d}y - \int_0^v e^{-\alpha x}\,\int_0^{v-x} e^{-\alpha y}W_1^{(0)}(y) \mathrm{d}y\,\mathbb{P}(B_1\in\mathrm{d}x) \right).
	\end{align*}
	By the monotone convergence theorem we can interchange the order of the limits, so that, by \eqref{eq: limit_integral_I}, the equation above equals
	\begin{align*}
		\lim_{\alpha\downarrow0}\frac{1}{\varphi_1(\alpha)} -\frac{\mathscr{B}_1(\alpha)}{\varphi_1(\alpha)}.
	\end{align*}
	By l'Hôpital's rule this is equal to \eqref{eq: limit_integral_II}. \hfill $\square$
	
	\subsection{Proof that the LSTs equal}\label{app: proof equivalence LSTs}
	
	In \cite[Subsection 4.2]{BBR} it is shown that the LST of the steady-state workload is given by
	\begin{equation}\label{eq: LST steady-state BBR}
		\mathbb{E}e^{-\alpha V_v} = C\left(\frac{\alpha}{\varphi_2(\alpha)} + \left(1 - \frac{\varphi_1(\alpha)}{\varphi_2(\alpha)}\right)\left(W^{(0)}_1(0) + \int_0^v e^{-\alpha x}\mathrm{d}W^{(0)}_1(x)\right)\right),
	\end{equation}
	with $\rho_i := {\lambda_i\mathbb{E}(B_i)}/{r_i^*}$ and drift parameter $r_i^* = -r_i > 0$,
	\begin{align*}
		C := \frac{r^*_2(1-\rho_2)}{1+\left(r^*_2(1-\rho_2) - r^*_1(1-\rho_1)\right)W_1^{(0)}(v)}.
	\end{align*}
	
	We require the following identities in order to prove the equivalence of our expression for the LST in Proposition~\ref{theorem: stationary LST workload}, and the expression in \eqref{eq: LST steady-state BBR}.
	\begin{lemma}\label{lemma: proof LSTs same}
		For $v>0$,
		\begin{equation}\label{eq: proof LSTs same}
			\int_0^v e^{-\alpha x}\mathrm{d} W^{(0)}_1(x) = \frac{\lambda_1}{-r_1} \int_0^v e^{-\alpha y} W^{(0)}_1(y) \left(1- \int_0^{v-y} e^{-\alpha x} \mathbb{P}(B_1\in\mathrm{d}x)\right)\,\mathrm{d}y.
		\end{equation}
	\end{lemma}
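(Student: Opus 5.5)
The identity is the integrated form of the renewal (Volterra) equation satisfied by the zero-order scale function of a compound Poisson process with drift. Write $W:=W_1^{(0)}$, $r:=r_1$, $\lambda:=\lambda_1$ and $F:=\mathbb{P}(B_1\in\cdot)$. Since the left-hand side of \eqref{eq: proof LSTs same} sits next to an explicit term $W_1^{(0)}(0)$ in \eqref{eq: LST steady-state BBR}, I read $\int_0^v e^{-\alpha x}\,\mathrm{d}W(x)$ as an integral over $(0,v]$, excluding the jump $W(0)=1/(-r)$ at the origin. I will state this convention explicitly at the start of the proof.

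\emph{Step 1: the renewal equation for $W$.} By the definition of the scale function and \eqref{eq: def laplace exp},
\[
\bigl(-r\alpha-\lambda(1-\mathscr{B}_1(\alpha))\bigr)\int_0^\infty e^{-\alpha x}W(x)\,\mathrm{d}x=1 .
\]
Divide by $\alpha$ and use $(1-\mathscr{B}_1(\alpha))/\alpha=\int_0^\infty e^{-\alpha x}\mathbb{P}(B_1>x)\,\mathrm{d}x$. Uniqueness of Laplace transforms then gives, for $x\geqslant0$,
\[
-r\,W(x)=1+\lambda\int_0^x W(y)\,\mathbb{P}(B_1>x-y)\,\mathrm{d}y .
\]
Setting $x=0$ recovers $W(0)=1/(-r)$. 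The right-hand side is absolutely continuous in $x$, hence so is $W$ on $(0,\infty)$. Differentiating the convolution, in the Stieltjes sense if $F$ has no density, yields
\[
-r\,\mathrm{d}W(x)=\lambda\Bigl(W(x)-\int_{[0,x]}W(x-z)\,F(\mathrm{d}z)\Bigr)\mathrm{d}x,\qquad x>0 .
\]
If one prefers to avoid differentiating, the same measure identity follows directly: $\int_{(0,\infty)}e^{-\alpha x}\,\mathrm{d}W(x)=\alpha\hat W(\alpha)-W(0)$, and $-r\alpha\hat W(\alpha)=1+\lambda\hat W(\alpha)(1-\mathscr{B}_1(\alpha))$.

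\emph{Step 2: integrate and change variables.} Multiply the measure identity by $e^{-\alpha x}$ and integrate over $(0,v]$. The first term gives $\frac{\lambda}{-r}\int_0^v e^{-\alpha y}W(y)\,\mathrm{d}y$. In the double integral, substitute $y=x-z$ and apply Fubini, which is justified because all integrands are nonnegative and bounded on compacts. This turns $\int_0^v e^{-\alpha x}\int_{[0,x]}W(x-z)F(\mathrm{d}z)\,\mathrm{d}x$ into
\[
\int_0^v e^{-\alpha y}W(y)\int_{[0,v-y]}e^{-\alpha z}F(\mathrm{d}z)\,\mathrm{d}y .
\]
Combining the two terms gives exactly the right-hand side of \eqref{eq: proof LSTs same}.

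\emph{Main obstacle.} The only delicate point is Step 1: justifying the differentiation, or equivalently identifying the Stieltjes measure $\mathrm{d}W$ on $(0,\infty)$, together with the bookkeeping of the atom at $0$. I would handle both through the Laplace-transform identity for the measure $\mathrm{d}W$ restricted to $(0,\infty)$. This avoids any regularity assumption on $F$, and it makes transparent why the atom $W(0)$ appears separately in \eqref{eq: LST steady-state BBR}.
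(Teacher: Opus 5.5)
Your argument is correct, but it runs in the opposite direction from the paper's.

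The paper takes a Laplace transform of both sides in the upper limit $v$, with variable $\omega$. After interchanging integrals, the left-hand side becomes $\omega^{-1}\bigl(-W_1^{(0)}(0)+(\alpha+\omega)/\varphi_1(\alpha+\omega)\bigr)$ by integration by parts. The $-W_1^{(0)}(0)$ term confirms that the paper also integrates over $(0,v]$, as you assumed. The right-hand side becomes $\lambda_1(1-\mathscr{B}_1(\alpha+\omega))/\bigl(-r_1\omega\,\varphi_1(\alpha+\omega)\bigr)$. The two agree by \eqref{eq: def laplace exp} and $W_1^{(0)}(0)=-1/r_1$. The renewal equation is then deduced afterwards as the case $\alpha=0$ (Corollary~\ref{coro: renewal equation}).

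You instead start from that renewal equation, obtained from a single transform in $x$. You convert it into the identity $-r_1\,\mathrm{d}W_1^{(0)}(x)=\lambda_1\bigl(W_1^{(0)}(x)-\int_{[0,x]}W_1^{(0)}(x-z)\,\mathbb{P}(B_1\in\mathrm{d}z)\bigr)\mathrm{d}x$ on $(0,\infty)$. You then integrate it against $e^{-\alpha x}$ over $(0,v]$ using Tonelli.

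The paper's route is a short algebraic check that needs no regularity of $W_1^{(0)}$. However, it only identifies the transforms in $v$, so it implicitly still needs continuity of both sides in $v$ to conclude for every $v>0$.

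Your route has three advantages:
\begin{itemize}
\item it gives the identity directly for each $v$ and every real $\alpha$;
\item it makes the bookkeeping of the atom at $0$ explicit;
\item it shows that the lemma is just the $e^{-\alpha x}$-weighted integral of the Volterra equation for $W_1^{(0)}$, so the lemma and the corollary are equivalent.
\end{itemize}
The only extra work is identifying $\mathrm{d}W_1^{(0)}$ on $(0,\infty)$. Your measure-level check $\int_{(0,\infty)}e^{-\alpha x}\,\mathrm{d}W_1^{(0)}(x)=\alpha\int_0^\infty e^{-\alpha x}W_1^{(0)}(x)\,\mathrm{d}x-W_1^{(0)}(0)$ handles that without any assumption on the jump distribution.
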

	\begin{proof}
		We show the equality by proving that the Laplace transforms with respect to $v$ of both sides are equal. Multiplying the left-hand side of \eqref{eq: proof LSTs same} by $e^{-\omega v}$, integrating $v$ over $(0,\infty)$ and changing the order of integration yields 
		\begin{align*}
			\int_0^\infty e^{-\alpha x} \left( \int_x^\infty e^{-\omega v}\,\mathrm{d}v\right)\mathrm{d}W^{(0)}_1(x) = \frac{1}{\omega}\int_0^\infty e^{-(\alpha+\omega) x} \mathrm{d} W^{(0)}_1(x).
		\end{align*}
		Integrating by parts then gives
		\begin{equation}\label{eq: proof LSTs same step 1}
			\frac{1}{\omega}\left(-W^{(0)}_1(0) + \frac{\alpha+\omega}{\varphi_1(\alpha+\omega)}\right).
		\end{equation}
		Multiplying the right-hand side of \eqref{eq: proof LSTs same} by $e^{-\omega v}$, integrating $v$ over $(0,\infty)$ and changing the order of integration yields for the first term
		\begin{equation}\label{eq: proof LSTs same step 2a}
			\frac{\lambda_1}{-r_1}\int_0^\infty e^{-\alpha y} \left( \int_y^\infty e^{-\omega v}\,\mathrm{d}v\right) W^{(0)}_1(y) \,\mathrm{d}y =\frac{\lambda_1}{-r_1\omega} \frac{1}{\varphi_1(\alpha + \omega)}.
		\end{equation}
		For the second, double integral, after applying the change in the order of integration $(\mathrm{d}x, \mathrm{d}y, \mathrm{d}v) \to (\mathrm{d}x, \mathrm{d}v, \mathrm{d}y)$, substituting $z:= v-y$, and then changing the order of integration to $(\mathrm{d}x, \mathrm{d}z, \mathrm{d}y) \to (\mathrm{d}z, \mathrm{d}x, \mathrm{d}y)$, we obtain
		\begin{align*}
			\frac{\lambda_1}{-r_1}\int_0^\infty e^{-(\alpha+\omega)y}W^{(0)}_1(y) \int_0^\infty e^{-\alpha x} \int_x^\infty e^{-\omega z}\,\mathrm{d}z\,\mathbb{P}(B_1\in\mathrm{d}x)\,\mathrm{d}y,
		\end{align*}
		which can be simplified to
		\begin{equation}\label{eq: proof LSTs same step 2b}
			\frac{\lambda_1}{-r_1\omega}\frac{\mathscr{B}_1(\alpha+\omega)}{\varphi_1(\alpha+\omega)}.
		\end{equation}
		Taking the difference of \eqref{eq: proof LSTs same step 2a} and \eqref{eq: proof LSTs same step 2b} gives for the Laplace transform of the right-hand side of \eqref{eq: proof LSTs same}
		\begin{align*}
			\frac{\lambda_1}{-r_1\omega} \frac{1-\mathscr{B}_1(\alpha+\omega)}{\varphi_1(\alpha+\omega)}.
		\end{align*}
		Using \eqref{eq: def laplace exp}, this can be rewritten to
		\begin{equation*}
			\frac{1}{\omega}\left(\frac{1}{r_1} + \frac{\alpha+\omega}{\varphi_1(\alpha+\omega)}\right).
		\end{equation*}
		Since $W_1^{(0)}(0) = -r_1^{-1}$ for a compound Poisson process with drift, this is equal to \eqref{eq: proof LSTs same step 1}, thus showing the equivalence of \eqref{eq: proof LSTs same}.
	\end{proof}
	Taking $\alpha=0$ in \eqref{eq: proof LSTs same} yields the following renewal equation; cf.\ the level-crossing based identities in \cite{DO}.
	\begin{corollary}\label{coro: renewal equation}
		For $x\in[0,\infty)$,
		\begin{equation}\label{eq: renewal equation}
			W_1^{(0)}(x) = -\frac{1}{r_1} - \frac{\lambda_1}{r_1}\int_0^x W_1^{(0)}(y)\mathbb{P}(B_1>x-y)\,\mathrm{d}y.
		\end{equation}
	\end{corollary}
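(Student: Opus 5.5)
The plan is to set $\alpha=0$ in Lemma~\ref{lemma: proof LSTs same} and then rearrange. That lemma holds for every $v>0$, so I rename $v$ as $x$ throughout.

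At $\alpha=0$ the left-hand side of \eqref{eq: proof LSTs same} is $\int_0^x \mathrm{d}W_1^{(0)}(y)$. I read $\mathrm{d}W_1^{(0)}$ as the Stieltjes measure on $[0,x]$, including the jump of $W_1^{(0)}$ at the origin; this is the reading consistent with the integration by parts in the proof of the lemma, which produced the term $-W_1^{(0)}(0)$. With that convention the left-hand side equals $W_1^{(0)}(x)-W_1^{(0)}(0)$.

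On the right-hand side at $\alpha=0$, the inner bracket becomes $1-\mathbb{P}(B_1\leqslant x-y)=\mathbb{P}(B_1>x-y)$. The right-hand side is therefore
\[
\frac{\lambda_1}{-r_1}\int_0^x W_1^{(0)}(y)\,\mathbb{P}(B_1>x-y)\,\mathrm{d}y .
\]

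Next I insert $W_1^{(0)}(0)=-1/r_1$, which was used at the end of that proof and is standard for a compound Poisson process with drift $r_1<0$. Rearranging then gives \eqref{eq: renewal equation} for every $x>0$.

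The case $x=0$ is immediate: the integral vanishes, and the claim reduces to $W_1^{(0)}(0)=-1/r_1$.

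The main obstacle is that setting $\alpha=0$ directly in Lemma~\ref{lemma: proof LSTs same} needs care. The lemma's proof works with Laplace transforms in $v$, and when $\mathbb{E}Y_1(1)\geqslant0$ the quantity $\varphi_1(\alpha+\omega)$ can vanish, so the transforms must be taken with $\omega$ large. This causes no real difficulty: both sides of \eqref{eq: proof LSTs same} are continuous in $\alpha\geqslant0$ for fixed $v$, since all integrals are over compact sets with bounded integrands. The identity therefore holds at $\alpha=0$ either by continuity or by repeating the transform argument with $\alpha=0$ and $\omega>\psi_1(0)$.
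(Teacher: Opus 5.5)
Your proposal is correct and follows the paper's route exactly: the corollary is obtained by setting $\alpha=0$ in Lemma~\ref{lemma: proof LSTs same} and substituting $W_1^{(0)}(0)=-1/r_1$. One wording slip: the value $W_1^{(0)}(x)-W_1^{(0)}(0)$ is the right one and matches the boundary term $-W_1^{(0)}(0)$ in the lemma's integration by parts, but it comes from the Stieltjes measure on $(0,x]$, i.e.\ \emph{excluding} the atom at the origin; including that atom would give $W_1^{(0)}(x)$, and the constant $-1/r_1$ would then drop out of the equation.
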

	
	Using Lemma \ref{lemma: proof LSTs same}, it is easily verified that the expression within the outermost brackets of the right-hand side of \eqref{eq: LST steady-state BBR} is equal to $\tfrac{\lambda_1}{-r_1}f(v,\alpha)$, and with Corollary \ref{coro: renewal equation} one can show that $C = \tfrac{-r_1}{\lambda_1}f(v,0)^{-1}$, thus showing that the expressions are equivalent.

\end{document}